\documentclass[11pt,reqno]{amsart}
\setlength{\topmargin}{0cm}\setlength{\textheight}{205mm}
\setlength{\oddsidemargin}{0.5cm} \setlength{\evensidemargin}{0.5cm}
\setlength{\textwidth}{155mm}
%%%%%%%%%%%%%%%%%%%%%%%%%%%%%%%%%%%%%%%%%%%%%%%%%%%%%%%%%%%%%%%%%%%%%%%%%%%%%%%%%%%%%%%%%%%%%%%%%%%%%%%%%%%%%%%%%%%%%%%%%%%%%%%%%%%%%%%%%%%%%%%%%%%%%%%%%%%%%%%%%%%%%%%%%%%%%%%%%%%%%%%%%%%%%%%%%%%%%%%%%%%%%%%%%%%%%%%%%%%%%%%%%%%%%%%%%%%%%%%%%%%%%%%%%%%%
\usepackage{amssymb}\usepackage{cancel}
\usepackage{esint}
\usepackage{amsfonts}
\usepackage{mathrsfs}
\usepackage{amsmath}
\usepackage{graphicx}
\usepackage{hyperref}
\usepackage{float}
\usepackage{epstopdf}
\usepackage{color}
\usepackage{bm}
\usepackage{comment}
\usepackage{soul}
\usepackage{esint}
\usepackage{amsthm}

\setcounter{MaxMatrixCols}{10}
%TCIDATA{OutputFilter=LATEX.DLL}
%TCIDATA{Version=5.50.0.2890}
%TCIDATA{<META NAME="SaveForMode" CONTENT="1">}
%TCIDATA{BibliographyScheme=BibTeX}
%TCIDATA{LastRevised=Wednesday, March 04, 2015 10:14:39}
%TCIDATA{<META NAME="GraphicsSave" CONTENT="32">}
%TCIDATA{Language=American English}

\allowdisplaybreaks
\newtheorem{theorem}{Theorem}[section]
\newtheorem{proposition}[theorem]{Proposition}
\newtheorem{lemma}[theorem]{Lemma}
\theoremstyle{definition}
\newtheorem{corollary}[theorem]{Corollary}
\newtheorem{remark}[theorem]{Remark}

\newtheorem{example}[theorem]{Example}
\newtheorem{definition}[theorem]{Definition}

\def\diam{\mathrm{diam}}

\def\mcD{\mathcal{D}}
\def\mcE{\mathcal{E}}
\def\mcF{\mathcal{F}}
\def\msG{\mathscr{G}}

\def\USC{\mathcal{USC}}

\def\sn{\stackrel{n}{\sim}}
\def\sm{\stackrel{m}{\sim}}

\def\mbEn{\mathbb{E}^{(n)}}
\def\mbEmn{\mathbb{E}^{(m,n)}}
\def\mbPn{\mathbb{P}^{(n)}}
\def\mbPmn{\mathbb{P}^{(m,n)}}
\def\pn{p^{(n)}}

\def\phin{\phi^{(n)}}

\numberwithin{equation}{section}

\begin{document}
\title[Dirichlet forms on unconstrained Sierpinski carpets in $\mathbb{R}^3$]{Dirichlet forms on unconstrained Sierpinski carpets in $\mathbb{R}^3$}

\author{Shiping Cao}
\address{Department of Mathematics, University of Washington, Seattle 98105, USA}
\email{spcao@uw.edu}
\thanks{}

\author{Hua Qiu}
\address{Department of Mathematics, Nanjing University, Nanjing, 210093, P. R. China.}
\thanks{The first author was supported by a grant from the Simons Foundation Targeted Grant (917524) to the Pacific Institute for the Mathematical Sciences. The second author was supported by the National Natural Science Foundation of China, grant 12071213, and the Natural Science Foundation of Jiangsu Province in China, grant BK20211142.}
\email{huaqiu@nju.edu.cn}

%    General info
\subjclass[2010]{Primary 28A80, 31E05}

\date{}

\keywords{unconstrained Sierpinski carpets, Dirichlet forms, diffusions, self-similar sets}

\maketitle

\begin{abstract}
We prove the existence of a strongly local, regular, self-similar Dirichlet form with a sub-Gaussian heat kernel estimate on an unconstrained Sierpinski carpet in $\mathbb{R}^3$. In the setting under consideration, the walk dimension $d_W$ and the Hausdorff dimension $d_H$ always satisfy the inequality that $d_H>d_W$.
\end{abstract}

\tableofcontents

\section{Introduction}\label{sec1}
In this paper, we prove the existence of a self-similar Dirichlet form with a sub-Gaussian heat kernel estimate on an unconstrained Sierpinski carpet in $\mathbb{R}^3$ ($\USC^{(3)}$). \vspace{0.2cm}

\noindent(\textbf{\emph{Unconstrained Sierpinski carpets in $\mathbb{R}^3$ ($\mathcal{USC}^{(3)}$)}}). We let $\square:=[0,1]^3\subset \mathbb{R}^3$, which is the unit cube in $\mathbb{R}^3$, and let $\msG$ be the \textit{group of self-isometries} on $\square$. 

Let $k\geq 3$ and $8+12(k-2)+6(k-2)^2\leq N\leq k^3-1$. Let $\{\Psi_i\}_{1\leq i\leq N}$ be a finite set of similarities with the form $\Psi_i(x)=\frac x k+ c_i$, $c_i\in\mathbb{R}^3$. Assume the following holds:
	
\noindent\emph{(Non-overlapping).} For $i\neq j$, the Lebesgue measure (on $\mathbb{R}^3$) of $\Psi_i(\square)\cap \Psi_j(\square)$ is $0$.
	
\noindent\emph{(Face included).} $\partial\square\subset \bigcup_{i=1}^N \Psi_i(\square)\subset\square$.
	
\noindent\emph{(Strong connectivity).} $\bigcup_{i=1}^N \Psi_i(\square)$ is connected and has no local cut-point.
	
\noindent\emph{(Symmetry).} $\Gamma\big(\bigcup_{i=1}^N \Psi_i(\square)\big)=\bigcup_{i=1}^N \Psi_i(\square)$ for any $\Gamma\in \msG$.
	
Then, we call the unique compact subset $K\subset \square$ satisfying
\[K=\bigcup_{i=1}^N \Psi_i(K)\]
an \textit{unconstrained Sierpinski carpet} in $\mathbb{R}^3$ ($\USC^{(3)}$). See Remarks \ref{remarka}--\ref{remarkd} in Section \ref{sec2}.\vspace{0.2cm}

The class $\USC^{(3)}$ is a natural higher dimensional analog of the planar unconstrained Sierpinski carpets ($\USC$) studied by the authors in an earlier work \cite{CQ1}, originated from the work of Kusuoka and Zhou \cite{KZ}. The adjective ``unconstrained'' means that we allow cells to live off the $1/k$-grids of $\square$ so that the intersection of two cells can be a line segment of irrational length or a rectangle of irrational area. As a consequence of higher dimensions, we expect the diffusion processes on $\USC^{(3)}$ to be transient (more precisely, we expect $d_H>d_W$ and we can only say that a process is transient on an infinite blowup of an $\USC^{(3)}$), and much difficulty arises. \vspace{0.2cm}

In history, most diffusion processes on self-similar sets that were constructed are pointwisely recurrent.

The area was initiated by Kusuoka \cite{kus}, Goldstein \cite{G} and Barlow-Perkins \cite{BP}, who constructed the diffusion process on the Sierpinski gasket and deeply studied the properties. The probabilistic construction was extended to nested fractals \cite{Lindstrom}. A purely analytical method, which becomes the standard point of view, was introduced by Kigami \cite{ki1,ki2}, where he defined Dirichlet forms as monotone limits of energy forms on discrete graphs and introduced a much broader class named post-critically finite (p.c.f.) self-similar sets. Readers can find  the construction of the Dirichlet forms in \cite{B,ki3,s}. Also see books \cite{CF,FOT} for background knowledge about Dirichlet forms.

A more difficult study was the construction of locally symmetric diffusion processes  on the Sierpinski carpets by Barlow-Bass \cite{BB1}. Deep studies on such processes were proceeded in a sequence of papers \cite{BB2,BB3,BB4,BB5,BBKT}, including the heat kernel estimates \cite{BB3,BB5} and a celebrated uniqueness theorem \cite{BBKT}. The fractals are non-p.c.f. since two neighboring cells may intersect on a line segment, so the method of graph approximations does not work easily. Still, in \cite{KZ}, Kusuoka and Zhou introduced a beautiful framework of defining self-similar Dirichlet forms on a large class of fractals satisfying some geometric conditions (A1)-(A4). But to fulfill the story, one still need verify a few more conditions, for example in \cite{KZ}, Kusuoka and Zhou used the ``Knight move'' argument (see condition (KM) in \cite{KZ}) by Barlow-Bass (see \cite{BB1,BB5}) to prove the Harnack inequality; in \cite{CQ1}, the authors used the strongly recurrent condition to verify that all the Poincar\'e constants are comparable (see condition (B) in \cite{CQ1}).

It is known that on p.c.f. self-similar sets and on planar Sierpinski carpets, the diffusion process can hit a single point. There are few natural examples by direct construction (of course, we can construct rich examples by taking the product or modifying the spaces properly, see \cite{B2} for example), that are known to be transient or not pointwisely recurrent on fractals with sub-Gaussian heat kernel estimates, i.e. for some constants $c_1$-$c_4>0$,
\begin{equation}\label{eqn11}
\frac{c_1}{t^{d_H/d_W}}e^{-c_2(\frac{d(x,y)^{d_W}}{t})^{\frac{1}{d_W-1}}}\leq p_t(x,y)\leq \frac{c_3}{t^{d_H/d_W}}e^{-c_4(\frac{d(x,y)^{d_W}}{t})^{\frac{1}{d_W-1}}},\ \forall 0<t\leq 1.
\end{equation}
In \cite{BB5}, Barlow and Bass constructed the diffusion processes on (higher dimensional) generalized Sierpinski carpets. We also need to mention the recent work of Murugan \cite{M1}, where a class of recurrent but not pointwisely recurrent diffusion processes with sub-Gaussian heat kernel estimates were constructed on snow balls via quasi-symmetry and circle packing.\vspace{0.2cm}

In this paper, we provide another class of fractals that has transient diffusion processes with sub-Gaussian heat kernel  estimates. This class contains irrationally ramified fractals as introduced in \cite{ki4}. We state our theorem precisely below.\vspace{0.2cm}

\noindent(\textbf{\emph{Standard self-similar Dirichlet forms (SsDF)}}). Let $K$ be a $\USC^{(3)}$, and $\mu$ be the normalized Hausdorff measure restricted on $K$. Let $(\mcE,\mcF)$ be a regular conservative irreducible symmetric Dirichlet form on $L^2(K,\mu)$. We call $(\mcE,\mcF)$ a \textit{standard self-similar Dirichlet form} on $K$ if
\begin{equation}\label{eqn1}
	\mcF\cap C(K)=\big\{f\in C(K):\  f\circ \Psi_i\in\mcF \text{ for any } i\in \{1,\dots, N\}\big\},
\end{equation}
and there exists $\rho>0$ such that
\begin{equation}\label{eqn2}
	\mcE(f,f)=\rho^{-1}\sum_{i=1}^N \mcE(f\circ \Psi_i, f\circ \Psi_i) \text{ for any } f\in\mcF\cap C(K).	
\end{equation}
We abbreviate such Dirichlet forms to SsDF and write $\mcE(f):=\mcE(f,f)$ for short later.

\begin{theorem}\label{thm1}
	Let $K$ be a $\USC^{(3)}$. Then there is an SsDF $(\mcE,\mcF)$ on $K$ such that a sub-Gaussian estimate (\ref{eqn11}) holds for the associated heat kernel with $d_H>d_W$. 
\end{theorem}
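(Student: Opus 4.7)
The plan is to follow the Kusuoka--Zhou framework as in \cite{KZ,CQ1}, constructing $(\mcE,\mcF)$ as a subsequential scaling limit of discrete Dirichlet forms on natural graph approximations of $K$, and then to transfer the sub-Gaussian heat kernel estimate (\ref{eqn11}) from uniform resistance/Poincar\'e estimates via the Barlow--Bass--Kumagai machinery. More precisely, I would first set up the level-$n$ cell graph with vertices $V_n$ encoding the ``boundary skeleton'' inherited from the faces of $\square$, define the hitting-distribution trace forms $\mcE^{(n)}$, and seek a self-similar eigenform: a conductance $c$ on $V_0$ and a renormalization factor $\rho>0$ such that the trace of the $n$-fold self-similar lift equals $\rho^{-n} c$. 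Following Kusuoka--Zhou, this is carried out via a compactness/fixed-point argument on the (symmetry-invariant) cone of Dirichlet forms on $V_0$, modulo the usual ratio normalization.

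Next I would enforce the $\msG$-symmetry hypothesis throughout: restricting to $\msG$-invariant forms cuts down the fixed-point problem to a finite-dimensional one and guarantees that whatever form we extract from a subsequence is again symmetric, so that the self-similar identity (\ref{eqn2}) holds over all $N$ copies with a single $\rho$. Once the symmetric eigenform is in hand, I would build $(\mcE,\mcF)$ as the $\Gamma$-limit (or equivalently the increasing limit along subdivisions) of the lifted discrete forms, and verify that $\mcF\cap C(K)$ satisfies (\ref{eqn1}) by the standard decomposition argument for forms defined on self-similar cell structures; strong locality, regularity and conservativeness come from the construction.

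The main obstacle is the transient regime $d_H>d_W$: in \cite{CQ1} the uniform comparability of Poincar\'e constants (condition (B) there) was checked from strong recurrence on the planar $\USC$, and this route is simply unavailable in $\mathbb{R}^3$. I would replace it by working with effective resistances between opposite faces of the cube and their sub-rectangles, proving a Knight-move-type resistance estimate adapted to the $\USC^{(3)}$ geometry; the symmetry condition on $K$ is strong enough (full $\msG$-invariance) to run a Barlow--Bass cross-cutting argument, and the ``face included'' and ``strong connectivity, no local cut-point'' hypotheses give the two directions of the bound. The outcome should be the two-sided estimate $R_n\asymp \rho^{-n}$ on the $n$-cell network uniformly across the symmetric eigenforms, together with a uniform Poincar\'e inequality on cells.

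With these uniform estimates in place, the remaining steps are comparatively standard. I would verify the Barlow--Bass--Kumagai--Teplyaev hypotheses (resistance estimate, volume regularity, Poincar\'e on balls, cutoff Sobolev) to conclude (\ref{eqn11}), with $d_H=\log N/\log k$ from the normalized Hausdorff measure and $d_W=\log(N\rho)/\log k$. The strict inequality $d_H>d_W$ then follows from $\rho<1$, which is forced by the quantitative lower bound $N\ge 8+12(k-2)+6(k-2)^2$: this cell-count is large enough that any symmetric eigenform must contract energy under refinement, a one-line estimate once the resistance bound above is established. This last observation is what distinguishes the $\USC^{(3)}$ setting from the planar $\USC$ and gives the transient diffusion promised by the theorem.
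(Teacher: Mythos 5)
Your proposal outlines the right general framework (Kusuoka--Zhou, self-similar renormalization, transfer of sub-Gaussian estimates from resistance/Poincar\'e bounds), but it contains several genuine gaps where you substitute p.c.f.-style or Barlow--Bass techniques that the paper explicitly cannot use, and where the hard steps are waved away.

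First, the ``hitting-distribution trace forms $\mcE^{(n)}$ on a boundary skeleton $V_0$ with a fixed-point argument on the cone of Dirichlet forms'' is the p.c.f. renormalization approach. A $\USC^{(3)}$ has no finite boundary vertex set: neighboring cells intersect along line segments and rectangles (possibly of irrational length/area), so there is no finite trace space on which to run a finite-dimensional fixed-point problem. The paper instead works with the full cell graphs $(W_n,E_n)$, with the Kusuoka--Zhou Poincar\'e constants $\lambda_n$ and resistances $R_n$, and obtains self-similarity at the very end by a Ces\`aro-averaging compactness trick (Section 10), not by a fixed point on a finite cone.

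Second, and most seriously, you propose to close the key gap in the transient regime by a ``Knight-move-type resistance estimate'' and a ``Barlow--Bass cross-cutting argument.'' The paper explicitly rules this out: Knight move and corner move rely on the precise rational alignment of cells, and in the $\USC^{(3)}$ setting cells can live off the $1/k$-grid, so side/corner moves are unavailable. The paper's actual route to $R_n\gtrsim N^{-n}\lambda_n$ is entirely different and occupies Sections 4--8: an averaged face-to-face estimate via a chain/projection argument (Proposition \ref{prop45}); a random-walk coupling and quick-oscillation argument (Sections 5--6, Proposition \ref{prop62}, Corollary \ref{coro610}) to upgrade the averaged estimate to a hitting-time lower bound (Theorem \ref{thm71}) and hence a face-to-face resistance lower bound (Corollary \ref{coro72}); and then an inductive extension (``building brick'') construction of cut-off functions (Section 8). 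None of these steps appear in your sketch, and there is no reason to expect a cross-cutting argument to supply them.

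Third, you claim $\rho<1$ (in your convention $d_W=\log(N\rho)/\log k$) ``is forced by the quantitative lower bound $N\ge 8+12(k-2)+6(k-2)^2$'' as a one-line estimate. In fact the strict inequality $d_H>d_W$ is not an immediate consequence of the cell count; it follows from Lemma \ref{lemma81}, which uses the face-included condition to cut the graph into $(4k-4)^n$ disjoint vertical tubes and thereby obtains the face-to-face resistance upper bound $R_{n,F}\le \bigl(\tfrac{k}{4k-4}\bigr)^n$. The cell count hypothesis enters only through the face-included condition; it is not a one-line contraction estimate on an arbitrary symmetric eigenform.

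Finally, you omit a crucial ingredient: the paper's passage from the resistance/Poincar\'e bounds to the sub-Gaussian heat kernel estimate is not via the Barlow--Bass--Kumagai--Teplyaev hypotheses directly, because the cut-off Sobolev inequality is not established by hand here. Instead, the paper invokes Murugan's resolution of the resistance conjecture in the regime $d_H<d_W+1$ (which holds here since $\rho\le N/k^2<k$), applied to the cable systems of the cell graphs, to obtain (PHI)$_\phi$ and the non-standard Gaussian estimate (Theorem \ref{thm92}), and then takes an equicontinuous subsequential limit of rescaled heat kernels (Theorem \ref{thm97}). Without Murugan's result, verifying the cut-off Sobolev inequality in this irrationally ramified setting would require separate work.
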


We will use the framework of Kusuoka-Zhou \cite{KZ}, which was modified a little in \cite{CQ1}. As pointed out, the main difficulty, that was not considered in previous papers \cite{KZ,CQ1,CQW}, arises from the transience of the process. 

On the other hand, during the past decades, equivalent characterizations for heat kernel estimates are extensively developed, which provide many useful tools and insights for us to study diffusion processes on concrete fractals. In a sequence of papers \cite{AB,BB6,GHL2}, the Gaussian type heat kernel estimates are shown to be equivalent to the Poincar\'e inequality and the cut-off Sobolev inequality, where the latter can be verified with the mean escaping time estimate \cite{AB,GHL2}. Deep applications on the stability of Harnack inequalities were also studied \cite{BCM,BM}. Moreover, people are still working on establishing easier characterizations. One of the well-known conjectures, named the resistance conjecture (\cite[Remark 3.17(1)]{B3}, \cite[p.1495]{GHL2}), states that the Poincar\'e inequality and the resistance estimate (between a ball and the complement of its neighborhood) are enough to derive the two-sides heat kernel estimate. Though the conjecture remains open so far, in \cite{M2}, Murugan solved it for the $d_H<d_W+1$ case, and we will need his result to finish the story of this paper. A key observation of Murugan's paper \cite{M2} is that one dimensional objects, for example a curve, are always hittable by the diffusion as long as $d_H<d_W+1$ and the Poincar\'e inequality that matches $d_W$ holds. Our approach in this paper also relies on a similar fact, and thus, the situation for higher dimensional (greater than $3$) analogs to $\USC$ are still not fully understood.\vspace{0.2cm}

Finally, we briefly introduce the main idea of the paper. 

Just as in the authors' previous work dealing with planar $\mathcal{USC}$ \cite{CQ1} (see also \cite{CQW}), the most difficult part about the existence is to show $R_n\gtrsim N^{-n}\lambda_n$, where $R_n$ are resistance constants and $\lambda_n$ are Poincar\'e constants (precise definitions and Kusuoka-Zhou's estimates will be recalled in Section \ref{sec3}). Recall that for standard Sierpinski carpets \cite{BB1,BB5}, this was solved by using the ``corner move'' and ``Knight move'' argument which essentially depending on the exact way that cells intersect. As for $\mathcal {USC}$, to overcome the difficulty caused by the worse geometry, we took two (purely analytic) steps in \cite{CQ1}: first, we obtained a face-to-face resistance estimate via a chain argument, using the fact that the functions in the domain are H\"older continuous; then, we used an extension argument to create nice bump functions with small energy. In this paper, the first step no longer works due to transience, while the second is kept. \vspace{0.2cm}

The major part of the paper, including part of Section \ref{sec2} and Sections \ref{sec4}--\ref{sec7}, is about the face-to-face resistance estimate. The proof can be split into three steps.

In the first step (Section \ref{sec4}), we consider a more complicated chain argument as in \cite{CQ1} to give a face-to-face resistance estimate in the sense of averaged values: we prove that there are functions with large differences in the average values on neighboring faces and relatively small energies. This step is purely analytic, and can be extended to even higher dimensional case without difficulty.

In the second step (Section \ref{sec6}), we show a consequence of the Poincar\'e inequality and the reflection symmetry: if the random walk on a approximating cell graph starting from one point of a face hits the opposite face in a short time, it has the potential to oscillate between the two faces quickly for a few times. Using this observation, if we pile up copies of a cell graph to get a ``vertical wall'' (consider all the $m$-cells attached to one face of the fractal), we can show that under the same assumption,  the diffusion hits the bottom face of the wall quickly from corresponding initial points, via a coupling argument (Section \ref{sec5}). 

In the last step (Section \ref{sec7}), we combine the two steps together to prove the face-to-face resistance estimate. 

Section \ref{sec8} is same as that in \cite{CQ1,CQW}, where we construct bump functions via a delicate extension argument.

Finally, we finish the story in Sections \ref{sec9}-- \ref{sec10}. In Section \ref{sec9}, we recall some facts about equivalent characterizations of the heat kernel estimates, and in particular, using \cite{M2} by Murugan and using \cite{AB,BB6,GHL2}, we see the diffusions on cable systems of the cell graphs have nice heat kernel estimates. Then, noticing that the heat kernels are equicontinuous, we can take a subsequential limit directly to get a Markov kernel on a $\USC^{(3)}$. Thus, we get a limit diffusion process with a sub-Gaussian heat kernel estimate. In Section \ref{sec10}, we use the strategy in \cite{CQ2} by the authors to construct a self-similar Dirichlet form, that is comparable with the Dirichlet form associated with the limit process in Section \ref{sec9}. This finishes the proof by the stability of the equivalent characterizations \cite{AB,BCM,BM,GHL2}.  \vspace{0.2cm}

Before ending this section, we remark that since we would like the fractal to contain each face of the initial cube so that we can slide inside cells without too much worry about the bad geometry, the $\USC^{(3)}$ class will not cover all generalized Sierpinski carpets  \cite{BB5,BBKT} in $\mathbb{R}^3$. However, in the case of $d_H<3$, the method of this paper can be easily modified to work on generalized Sierpinski carpets. \vspace{0.2cm}%and on Sierpinski chips (see \cite{} by Kigami) with suitable reflection symmetries.\vspace{0.2cm}
 
Throughout the paper, we will simply write $x$ (or $y, z$) for a point in $\mathbb{R}^3$, and from time to time we write $x=(x_1,x_2,x_3)$ to specify the three coordinates of $x$ (we will also use $o$, short for \textit{orientation}, for an undetermined index $1,2,3$).  As we already did in the definition of $\USC^{(3)}$, for each $x=(x_1,x_2,x_3)\in \mathbb{R}^3$ and $t\in \mathbb{R}$, we write $tx:=(tx_1,tx_2,tx_3)\in \mathbb{R}^3$. 

We always use $d$ to denote the Euclidean metric on $\mathbb{R}^3$ unless there is further notice. As usual, for $x\in\mathbb{R}^3$ and $r>0$, we will denote $B(x,r):=\{y\in \mathbb{R}^3: d(x,y)<r\}$ the open ball centered at $x$ with radius $r$; and for $A,B\subset \mathbb{R}^3$, we will write
\[d(A,B)=\inf_{x\in A,y\in B}d(x,y),\]
which is positive if $A,B$ are disjoint compact sets. Also write $d(x,A)=d(\{x\},A)$ for short.

From time to time, we write $a\lesssim b$ for two variables (functions, forms) if there is a constant $C>0$ such that $a\leq C\cdot b$, and write $a\asymp b$ if both $a\lesssim b$ and $b\lesssim a$ hold. For two reals $a,b$, we write $\lfloor a\rfloor:=\max\{n\in\mathbb{Z}: n\leq a\}$ and $\lceil a\rceil:=\min\{n\in\mathbb Z: n\geq a\}$, and always abbreviate that $a\wedge b=\min\{a,b\}$ and $a\vee b=\max\{a,b\}$.

\section{Cell graphs}\label{sec2}
In this section, we present some geometric properties  of $\mathcal{USC}^{(3)}$, and introduce the associated cell graphs. To help readers digest the conditions in the definition of $\USC^{(3)}$, we begin this section with some remarks.

\begin{remark}\label{remarka} A point $x$ is a \textit{local cut-point} of a metric space $(X,d)$ if there is a connected neighborhood $U$ of $x$ such that $U\setminus \{x\}$ is not connected. It is necessary to exclude the existence of local cut-points in a $\USC^{(3)}$, since we expect that the diffusion process to be constructed  is transient and each single point set is polar.
	
We also remark here that the strong connectivity condition is easy to verify. A local cut-point exists if and only if the ``\textit{diagonal}'' case happens: there are $1\leq i,i'\leq N$ such that $\#(\Psi_i \square\cap \Psi_{i'}\square)=1$ and in addition there doesn't exist $i''\in \{1,2,\cdots,N\}\setminus\{i,i'\}$ such that $\Psi_i\square\cap \Psi_{i'}\square\subset \Psi_{i''}\square$.
\end{remark}

\begin{remark} To reduce the number of brackets, in the following context, we always write $\Psi_iA$ instead of $\Psi_i(A)$ for $A\subset\square$. We do similarly for the similarities $\Psi_w$ that will be introduced in Definition \ref{def27}.
\end{remark}

\begin{remark} Face included condition is a technical assumption for creating cut-off functions. The method that will be developed in this paper should extend to some other fractals under a weaker boundary condition.
\end{remark}

\begin{remark}\label{remarkd} The condition $k\geq 3$ is to avoid trivial set by the symmetry condition. The condition $N\geq 8+12(k-2)+6(k-2)^2$ is a requirement of the face included condition, and the condition $N\leq k^3-1$ ensures that we are dealing with a non-trivial self-similar set.
\end{remark}

Now we proceed to study the basic properties of $\USC^{(3)}$. Especially, we will focus on a consequence of the strong connectivity condition (see Lemma \ref{lemma28}). Also, we will define a sequence of graphs on the cells of a $\USC^{(3)}$ (denoted as $K$), and show properties (A1)-(A4) as in \cite{KZ}. We first list some useful notations. \vspace{0.2cm}

\noindent(\textbf{\emph{$H_{o,s}$-planes}}). For $o\in \{1,2,3\}$ and $s\in \mathbb{R}$, we define $H_{o,s}$ the plane in $\mathbb{R}^3$ as
\[H_{o,s}=\{x=(x_1,x_2,x_3)\in\mathbb{R}^3: x_o=s\}.\]

\noindent(\textbf{\emph{Faces}}). For $o\in \{1,2,3\}$ and $s\in \{0,1\}$, we define
\[F_{o,s}=\square\cap H_{o,s},\]
and call it a \textit{face} of $\square$. For $o\in\{1,2,3\},s\in \{0,1\}$, we say $F_{o,1-s}$ is the face opposite to $F_{o,s}$, and for  $o\neq o'\in \{1,2,3\}$ and $s,s'\in \{0,1\}$, we say $F_{o',s'}$ is a neighboring face of $F_{o,s}$. Clearly, two faces are opposite if they are disjoint, otherwise they are neighboring. We denote the boundary of $\square$ by $\partial \square$, i.e. $\partial \square=\bigcup_{o=1}^3\bigcup_{s=0}^1 F_{o,s}$. \vspace{0.2cm}

\noindent(\textbf{\emph{The group of symmetry}}). We denote $\msG$ the \textit{group of self-isometries} on $\square$. This symmetry group is larger than that for the $2$-dimensional case. We only name those symmetries in $\msG$ that will be frequently used. \vspace{0.2cm}

(\textbf{\emph{The reflection $\Gamma_{[o]}$}}). For $o\in \{1,2,3\}$, we define $\Gamma_{[o]}:\square\to\square$ as $\Gamma_{[o]}(x)=x'$ so that $x_o+x'_o=1$ and $x_{o'}=x'_{o'},\forall o'\neq o$.

(\textbf{\emph{The reflection $\Gamma_{[o,o']}$}}). For $\{o,o',o''\}=\{1,2,3\}$, we define $\Gamma_{[o,o']}:\square\to\square$ as $\Gamma_{[o,o']}(x)=x'$ so that $x_{o}=x'_{o'}$, $x_{o'}=x'_{o}$ and $x_{o''}=x'_{o''}$.  \vspace{0.2cm}

Clearly, the collection of all reflections $\Gamma_{[o]}$ and $\Gamma_{[o,o']}$  generates $\msG$.

\begin{definition}\label{def27}
	Let $W_0=\{\emptyset\},W_1=\{1,2,\cdots,N\}$ be the \textit{alphabet} associated with $K$,  and for $n\geq 1$, $W_n=W_1^n:=\{w=w_1\cdots w_n:w_i\in W_1,\forall 1\leq i\leq n\}$ be the collection of \textit{words} of length $n$. Also, write $W_*=\bigcup_{n=0}^\infty W_n$ for the collection of words of finite length.
	
	(a). For each $w=w_1\cdots w_n\in W_n$, we denote $|w|=n$ the \textit{length} of $w$, and write
	\[\Psi_w=\Psi_{w_1}\circ \cdots \circ \Psi_{w_n}.\]
	So each $w\in W_n$ represents an \textit{$n$-cell} $\Psi_wK$ in $K$.

	(b). For $w=w_1\cdots w_n\in W_n,w'=w'_1\cdots w'_m\in W_m$, we write \[w\cdot w'=w_1\cdots w_n w'_1\cdots w'_m\in W_{n+m}.\]  For $A\subset W_n,B\subset W_m$, we denote $A\cdot B=\{w\cdot w':w\in A,w'\in B\}$. In particular, we abbreviate  $\{w\}\cdot B$ to $w\cdot B$.
\end{definition}

\begin{definition}\label{def29}
	For $o\in \{1,2,3\}$, $s\in\{0,1\}$, and $n\geq 0$, we define the \textit{faces} of $W_n$ as
	\begin{align*}
		\tilde{F}_{n,o, s}&=\{w\in W_n:\Psi_wK\cap F_{o,s}\neq \emptyset\},
		%\tilde{F}_{n,i,(l_1,l_2,l_3)}&=\{w\in W_n:\Psi_wK\cap F_{i,{(l_1,l_2,l_3)}}\neq \emptyset\}.
	\end{align*}
	and write
	\[\partial W_n=\bigcup_{o=1}^3\bigcup_{s=0}^1\tilde{F}_{n,o,s},\]
	which represents the \textit{border cells(words)} of $W_n$. From time to time, we also use the notation
	\[(\partial W_1)^n:=\{w=w_1\cdots w_n\in W_n:w_i\in \partial W_1,\forall 1\leq i\leq n\}.\]
\end{definition}

The following Lemma \ref{lemma28} shows that  cells are connected in a strong way.
\begin{lemma}\label{lemma28}
	There is a positive constant $C$ depending only on $K$ such that for any $n\geq 1$ and any distinct $w,w'\in W_n$, the following holds.
	
	(a). There is a sequence $w=w^{(0)},\cdots,w^{(L)}=w'$, such that for any $0<l\leq L$,  $\Psi_{w^{(l-1)}}K\cap \Psi_{w^{(l)}}K\neq\emptyset$, and in addition,\\
	\noindent (a.1). if $\Psi_{w^{(l-1)}}K\cap \Psi_{w^{(l)}}K$ is a line segment, the length of $\Psi_{w^{(l-1)}}K\cap \Psi_{w^{(l)}}K$ is larger than $Ck^{-n}$;\\
	\noindent (a.2). if $\Psi_{w^{(l-1)}}K\cap \Psi_{w^{(l)}}K$ is a rectangle, the area of $\Psi_{w^{(l-1)}}K\cap \Psi_{w^{(l)}}K$ is larger than $Ck^{-2n}$.
	
	(b). In particular, if $\Psi_wK\cap \Psi_{w'}K\neq\emptyset$, we can require $L\leq 3$ in (a).
\end{lemma}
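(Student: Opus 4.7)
The plan is to establish (b) first and derive (a) from it. The cell-adjacency graph on $W_n$, with edges between $w, w'$ whenever $\Psi_w K \cap \Psi_{w'} K \neq \emptyset$, is connected: otherwise the cells in distinct components would partition $K$ as a disjoint union of closed sets, contradicting the connectedness of $K$ (which is inherited from the strong connectivity of $\bigcup_i \Psi_i \square$ by iteration of the self-similar structure). Hence any pair $w, w' \in W_n$ is joined by a finite chain of pairwise intersecting $n$-cells, and replacing each edge of this chain by the three-step refinement provided by (b) yields a chain of the form required in (a).

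For (b), the intersection $Q := \Psi_w \square \cap \Psi_{w'} \square$ of two axis-aligned $k^{-n}$-cubes with disjoint interiors is necessarily a point, a line segment, or a rectangle. The single-step choice $L = 1$ already satisfies (a.1)--(a.2) when $Q$ is a point (the thickness conditions are vacuous), when $Q$ is a line segment of length at least $c k^{-n}$, or when $Q$ is a rectangle of area at least $c k^{-2n}$. The delicate situation is when $Q$ is a thin segment or a thin rectangle, which corresponds geometrically to $\Psi_w$ and $\Psi_{w'}$ being shifted by nearly the maximal amount $k^{-n}$ in one or two transverse coordinates.

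For this thin case I would reduce to $n=1$ by self-similarity. Let $v$ be the longest common prefix of $w, w'$ and write $w = v \cdot u, \; w' = v \cdot u'$ with $u_1 \neq u'_1$. Inside $\Psi_v \square$ the configuration of $\Psi_w, \Psi_{w'}$ is isometric, after rescaling by the factor $k^{|v|}$, to a pair of cells of $K$ at level $n - |v|$. At the level of $K$ itself, the face-included condition together with the symmetry group $\msG$ and the cell-count bound $N \geq 8+12(k-2)+6(k-2)^2$ force many cells to be arranged symmetrically along every face plane of $\square$; I would use these to locate either a single bridging cell sharing thick faces with both $\Psi_w$ and $\Psi_{w'}$, or else a pair of bridging cells in sequence, yielding $L \leq 2$ or $L = 3$ respectively. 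The characterization of local cut-points in Remark~\ref{remarka} plays the essential role of ruling out degenerate point-intersections: whenever two cells meet at a single point, a third cell contains that point, serving as the anchor of the bridging argument.

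The main obstacle will be the geometric case analysis needed to verify that one or two bridging cells always suffice within the budget $L \leq 3$, and in particular that when a single direct bridge fails for thickness reasons, two in sequence succeed. The symmetry group $\msG$ should substantially reduce the number of distinct configurations to check, and will also ensure that the constant $C$ obtained is uniform in $n$.
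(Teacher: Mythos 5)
Your overall plan (establish (b), derive (a) by chaining; pass to the longest common prefix $\iota$ so that $w = \iota\cdot i\cdot\tau$, $w' = \iota\cdot i'\cdot\tau'$ with $i\neq i'$; and invoke the no-local-cut-point condition to anchor a bridging cell when $\Psi_{\iota\cdot i}K \cap \Psi_{\iota\cdot i'}K$ is a single point) matches the paper. But in the main case, where $B := \Psi_{\iota\cdot i}K \cap \Psi_{\iota\cdot i'}K$ is a segment or rectangle and the actual intersection $\Psi_wK \cap \Psi_{w'}K$ may be arbitrarily thin, your argument has a genuine gap, which you yourself flag as the ``main obstacle.''

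The appeal to the symmetry group $\msG$ and the face-included cell count does not close this gap: neither $\tau$ nor $\tau'$ need sit in any symmetric position inside $\Psi_{\iota\cdot i}$, $\Psi_{\iota\cdot i'}$, and the bridging cells cannot be produced by invoking symmetric arrangements along face planes. Also, rescaling by $k^{|\iota|}$ does \emph{not} reduce to $n=1$; it only reduces to level $n - |\iota|$, which is still unbounded, so no finite case check results. The paper instead makes a quantitative, measure-theoretic argument that needs no symmetry: it considers the $k^{-n}$-neighborhoods $A_w, A_{w'}$ of $\Psi_w\square, \Psi_{w'}\square$ intersected with $B$, observes that both contain a $k^{-n}$-ball around a common point of $\Psi_wK\cap\Psi_{w'}K$ (so $\mathcal H^1(A_w\cap A_{w'}) \gtrsim \min\{C'k^{-n'},k^{-n}\}$ in the segment case, similarly for $\mathcal H^2$ in the rectangle case), and then pigeonholes over the at most $3$ (resp.\ $9$) cells covering each of $A_w\cap B$ and $A_{w'}\cap B$ to locate a pair of bridging cells $\Psi_{\iota\cdot i\cdot\tau^{(1)}}K$, $\Psi_{\iota\cdot i'\cdot\tau^{(2)}}K$ with intersection of length $\gtrsim k^{-n}$ (or area $\gtrsim k^{-2n}$), giving $L\leq 3$. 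This overlap-plus-pigeonhole step is exactly what is missing from your proposal, and without it the claim of a uniform constant $C$ is unsubstantiated.
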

\begin{proof}
	(a) is an easy consequence of (b). In fact, by the strong connectivity condition, for any $n\geq 1$ and $w,w'\in W_n$, there is a sequence $w=w^{(0)},\cdots,w^{(L)}=w'$, such that $\Psi_{w^{(l-1)}}K\cap \Psi_{w^{(l)}}K\neq \emptyset$ for any $0<l\leq L$, then we simply apply (b) to get a sequence satisfying (a.1), (a.2) of length less than $3L+1$. Hence, it suffices to prove (b).
	
	In the following, we use $\mathcal{H}^t$ to denote the $t$-dimensional Hausdorff measure for $t=1,2$ (normalized so that the unit interval has $\mathcal{H}^1$-measure $1$, the unit square has $\mathcal{H}^2$-measure $1$). We let
	\[\begin{aligned}
		C'=&k\cdot \min\big\{\mathcal H^1\big(P_o(\Psi_iK\cap \Psi_jK)\big):  \mathcal H^1\big(P_o(\Psi_iK\cap \Psi_jK)\big)>0, \ 1\leq i\neq j\leq N,\  o=1,2,3\big\},
	\end{aligned}\]
	where $P_o$ is the orthogonal projection defined as $P_o(x)=x_o$, $o=1,2,3$. We will prove the lemma for
	\[C:=\frac{C'^2}{81}=\min\big\{\frac{C'}{9},\frac{1}{9},\frac{C'^2}{81},\frac{C'}{81},\frac{1}{81}\big\}.\]
	
	Now, let $w,w'$ be the words in (b). We can find $1\leq n'\leq n$ such that
	\[w=\iota\cdot i\cdot \tau,\quad w'=\iota\cdot i'\cdot \tau', \]
	for some $\iota\in W_{n'-1}$, $i\neq i'\in W_1$ and $\tau,\tau'\in \partial W_{n-n'}$. Let $B=\Psi_{\iota\cdot i}K\cap \Psi_{\iota\cdot i'}K$.
	
	First, if $\#B=1$, by the strong connectivity condition, there exists $i''\in W_1\setminus\{i,i'\}$ such that $B\subset \Psi_{\iota\cdot i''}K$, since otherwise $\Psi_\iota^{-1}(B)$ is a local cut-point of $\bigcup_{j=1}^N \Psi_j\square$. Pick $\tau''\in \partial W_{n-n'}$ such that $B\subset \Psi_{\iota\cdot i''\cdot \tau''}K$. Then one can see either that $\mathcal{H}^1(\Psi_wK\cap \Psi_{\iota\cdot i''\cdot \tau''}K)=k^{-n}, \mathcal{H}^2(\Psi_{w'}K\cap \Psi_{\iota\cdot i''\cdot \tau''}K)=k^{-2n}$ or $\mathcal{H}^2(\Psi_wK\cap \Psi_{\iota\cdot i''\cdot \tau''}K)=k^{-2n}, \mathcal{H}^1(\Psi_{w'}K\cap \Psi_{\iota\cdot i''\cdot \tau''}K)=k^{-n}$.
	
	It remains to consider the case that $\#B=\infty$, i.e. $B$ is a line segment or a rectangle.
	Let
	\[
	\begin{cases}
	N_\tau=\big\{\tau''\in \partial W_{n-n'}:\#(\Psi_{\tau''}K\cap \Psi_{\tau}K)=\infty\big\},\\
	N_{\tau'}=\big\{\tau''\in \partial W_{n-n'}:\#(\Psi_{\tau''}K\cap \Psi_{\tau'}K)=\infty\big\},
	\end{cases}
	\]
	and $A_w=\bigcup_{\tau''\in N_\tau}\Psi_{\iota\cdot i\cdot \tau''}K$, $A_{w'}=\bigcup_{\tau''\in N_{\tau'}}\Psi_{\iota\cdot i'\cdot \tau''}K$. We can see that
	\[
	\begin{cases}
		A_w\cap B=\big\{y\in \mathbb{R}^3:\sup_{o=1}^3|x_o-y_o|\leq k^{-n}\text{ for some }x\in \Psi_w\square\big\}\cap B,\\
		A_{w'}\cap B=\big\{y\in \mathbb{R}^3:\sup_{o=1}^3|x_o-y_o|\leq k^{-n}\text{ for some }x\in \Psi_{w'}\square\big\}\cap B.
	\end{cases}	
	\]
	Hence, by fixing a point $x\in \Psi_wK\cap \Psi_{w'}K$, we have
	\[A_w\cap A_{w'}\supset B\cap \big\{y\in \mathbb{R}^3:\sup_{o=1}^3|x_o-y_o|\leq k^{-n}\big\}.\]
    Then, if $B$ is a line segment, one can see that $\mathcal{H}^1(A_w\cap A_{w'})\geq \min\{C'k^{-n'},k^{-n}\}$, so we can find $\tau^{(1)}\in N_{\tau},\tau^{(2)}\in N_{\tau'}$ such that $\mathcal{H}^1\big(\Psi_{\iota\cdot i\cdot\tau^{(1)}}K\cap \Psi_{\iota\cdot i'\cdot\tau^{(2)}}K\big)\geq \frac {1}{3\cdot 3}\min\{C'k^{-n'}, k^{-n}\}$; if $B$ is a rectangle, one can see that $\mathcal{H}^2(A_w\cap A_{w'})\geq \min\{C'^2k^{-2n'},C'k^{-n'-n},k^{-2n}\}$, so we can find $\tau^{(1)}\in N_{\tau},\tau^{(2)}\in N_{\tau'}$ such that $\mathcal{H}^2\big(\Psi_{\iota\cdot i\cdot\tau^{(1)}}K\cap \Psi_{\iota\cdot i'\cdot\tau^{(2)}}K\big)\geq \frac{1}{9\cdot 9}\min\{C'^2k^{-2n'},C'k^{-n'-n},k^{-2n}\}$. Hence, in both cases, we can find a desired sequence.
\end{proof}

Now for $n\geq 0$, we are ready to define the cell graph associated with $W_n$.

\begin{definition}\label{def210}
Given two distinct words $w,w'\in W_n$, we say $w\sn w'$ if one of the following cases happens.\vspace{0.2cm}

\emph{Case 1. $\{w,w'\}\subset (\partial W_1)^n$, and there are $o\in \{1,2,3\}$ and $s\in \{0,1\}$ such that $\Psi_wF_{o,s}=\Psi_{w'}F_{o,1-s}$.}\vspace{0.2cm}

\emph{Case 2. $\{w,w'\}\not\subset (\partial W_1)^n$, and $\Psi_w\square\cap \Psi_{w'} \square\neq \emptyset$ such that the following holds:	\\\noindent (a.1).  if $\Psi_{w}K\cap \Psi_{w'}K$ is a line segment, the length of $\Psi_{w}K\cap \Psi_{w'}K$ is larger than $Ck^{-n}$;\\
\noindent (a.2). if $\Psi_{w}K\cap \Psi_{w'}K$ is a rectangle, the area of $\Psi_{w}K\cap \Psi_{w'}K$ is larger than $Ck^{-2n}$,\\
where $C$ is the same constant in Lemma \ref{lemma28}.}
\vspace{0.2cm}

\noindent We use the notation $E_n:=\{(w,w')\in W_n^2: w\sn w'\}$ to denote the set of \textit{edges} induced by the relation $\sn$, and call $(W_n,E_n)$ the \textit{level-$n$ cell graph} associated with $K$. For a set $A\subset W_n$, say $A$ is  \textit{connected}  if the graph $(A, E_n|_{A\times A})$ is connected. For connected sets $A,B\subset W_n$, we write $A\sn B$ provided that there exist $w\in A$, $w'\in B$ so that $w\sn w'$, and write $w\sn B$ if $A=\{w\}$. In addition, for a finite collection of distinct words $w^{(0)}, \cdots w^{(L)}$ in $W_n$ with $L\geq 0$ satisfying that $w^{(i-1)}\sn w^{(i)}$, $\forall 0<i\leq L$, we call it a \textit{path} from $w^{(0)}$ to $w^{(L)}$, and say the \textit{length} of this path is $L+1$.
\end{definition}

\begin{remark}\label{remark210} By the face included condition, cells represented by $(\partial W_1)^n$ are located on $1/k^n$-grids in $\mathbb{R}^3$. According to the graph structure defined above, neighboring words in $(\partial W_1)^n$ only represent those face-to-face cells, which automatically enjoy nice local reflection symmetry. This enable us to easily deal with the reflection between borders of two \textit{blocks} $w\cdot W_m,w'\cdot W_m$ for $w\sn w'\in (\partial W_1)^n$ and $m\geq 0$.
\end{remark}

\begin{remark}\label{remark212}
For $w, w'\in (\partial W_1)^n$ satisfying $\Psi_wK\cap \Psi_{w'}K\neq\emptyset$, there always exists a path from $w$ to $w'$ with length at most $4$.
\end{remark}

Finally, before ending  this section, we point out that a $\USC^{(3)}$ $K$ always has the following geometric properties (\textbf{A1})-(\textbf{A4}) (we use the same notations in \cite{KZ}, also  \cite{CQ1}).

\vspace{0.2cm}

\noindent(\textbf{A1}). \emph{The open set condition: there exists a non-empty open set $U\subset \mathbb{R}^3$ such that $\bigcup_{i=1}^N \Psi_iU\subset U$ and $\Psi_iU\cap \Psi_jU=\emptyset$, $\forall i\neq j\in \{1,\cdots,N\}$. }

\noindent(\textbf{A2}). \emph{$(W_n,E_n)$ is connected for each $n\geq 1$.}

\noindent(\textbf{A3}). \emph{There is a constant $0<c_*<1$ and $L_*\in \mathbb{N}$ such that for any $n\geq 1$, }

\emph{(a).  if $x,y\in K$ and $d(x,y)<c_*k^{-n}$,  there exists a path $w^{(0)}, \cdots, w^{(L)}$ in  $W_n$  such that $x\in \Psi_{w^{(0)}}K, y\in\Psi_{w^{(L)}}K$ and $L\leq L_*$};

\emph{(b).  if $w,w'\in W_n$ and there is no path from $w$ to $w'$ with length at most $L_*+1$, then $d(x,y)\geq c_*k^{-n}$ for any $x\in \Psi_wK$ and $y\in\Psi_{w'}K$.}

\noindent(\textbf{A4}). \emph{$\partial W_n\neq W_n$ for $n\geq 2$.}\vspace{0.2cm}

%\begin{remark} The condition (\textbf{A3}) is exactly the condition that ``the Euclidean metric is $L_*$-adapted'' introduced in \cite{ki5} by Kigami.\end{remark}

\noindent(\textbf{\emph{Hausdorff measure}}). By (\textbf{A1}), the Hausdorff dimension of $K$ is $d_H:=\frac{\log N}{\log k}$, the unique solution to the equation $\sum_{i=1}^N(\frac{1}{k})^\alpha=1$. Throughout the paper, we will always choose $\mu$ to be the normalized $d_H$-dimensional Hausdorff measure restricted on $K$. Equivalently, $\mu$ is the unique self-similar probability measure on $K$ such that $\mu=\frac{1}{N}\sum_{i=1}^N\mu\circ \Psi_i^{-1}$.

\begin{proof}[Proof of (\textbf{A1})-(\textbf{A4})] \hspace{0.2cm}

(\textbf{A1}) and (\textbf{A4}) are obvious. (\textbf{A2}) follows from Lemma \ref{lemma28} and Remark \ref{remark212}. It remains to verify (\textbf{A3}). \vspace{0.2cm}

Let
\[c_*=\frac 12\wedge \Big\{ k\cdot\min\big\{d(\Psi_iK,\Psi_jK):\Psi_iK\cap \Psi_jK=\emptyset, 1\leq i,j\leq N\big\}\Big\}.\]

(\textbf{A3})-(a). Let $x,y\in K$ with  $d(x,y)<c_*k^{-n}$ for some $n\geq 1$. Due to Lemma \ref{lemma28} (b) and Remark \ref{remark212}, to prove \textbf{(A3)}-(a), it suffices to prove that there exist $w,w',w''\in W_n$ satisfying $x\in\Psi_wK$, $y\in\Psi_{w'}K$ and $\Psi_{w}K\cap\Psi_{w''}K\neq \emptyset$, $\Psi_{w'}K\cap\Psi_{w''}K\neq \emptyset$.

Let \[m_0=1+\max\big\{m\geq 0: \text{ there is } \tau\in W_m \text{ such that } x,y\in \Psi_\tau K\big\}.\]

\textit{Case 1. $m_0>n$. }\vspace{0.2cm}
	
In this case, there is $w\in W_n$ such that $x,y\in \Psi_{w}K$.\vspace{0.2cm}

\textit{Case 2. $m_0\leq n$. }\vspace{0.2cm}
	
In this case, we show that there are $v\neq v'\in W_{m_0}$, $o\in\{1,2,3\}$ and $s\in\{0,1\}$ such that
$\Psi_{v}K\cap \Psi_{v'}K\neq \emptyset$,
and
$x\in \bigcup_{\tilde{v}\in \tilde F_{n-m_0,o,s}}\Psi_{v\cdot \tilde{v}}K$, $y\in \bigcup_{\tilde{v}\in \tilde F_{n-m_0,o,1-s}}\Psi_{v'\cdot \tilde{v}}K$.
This makes the geometry clear, and the desired result follows immediately. We prove the observation below.

First, by the definition of $m_0$, there is $\tau\in W_{m_0-1}$ such that $x\in \Psi_{\tau} K$, $y\in \Psi_{\tau}K$.  Next, noticing that $d(\Psi_\tau^{-1}x,\Psi_\tau^{-1}y)<c_* k^{-n+(m_0-1)}\leq c_*k^{-1}$, by the definition of $c_*$, we have $1\leq j\neq j'\leq N$ so that $\Psi_\tau^{-1}x\in \Psi_jK$, $\Psi_{\tau}^{-1}y\in \Psi_{j'}K$ and $\Psi_jK\cap \Psi_{j'}K\neq\emptyset$. We let $v=\tau\cdot j$ and $v'=\tau\cdot j'$. Finally, choose $o\in\{1,2,3\}$ and $s\in\{0,1\}$ such that
\[\Psi_vK\cap \Psi_{v'}K\subset \bigcup_{\tilde{v}\in \tilde F_{n-m_0,o,s}}\Psi_{v\cdot \tilde{v}}K,\quad \Psi_vK\cap \Psi_{v'}K\subset \bigcup_{\tilde{v}\in \tilde F_{n-m_0,o,1-s}}\Psi_{v'\cdot \tilde{v}}K.\]
Clearly, $x\in \bigcup_{\tilde{v}\in \tilde F_{n-m_0,o,s}}\Psi_{v\cdot \tilde{v}}K$ since otherwise $d(x,y)\geq d(x,\Psi_{v'}K)\geq k^{-n}$, which contradicts the fact $d(x,y)<c_*k^{-n}$. Similarly,   $y\in \bigcup_{\tilde{v}\in \tilde F_{n-m_0,o,1-s}}\Psi_{v'\cdot \tilde{v}}K$.

\vspace{0.2cm}

(\textbf{A3})-(b). We prove  by contradiction. Let $w, w'\in W_n$, and assume there is no path from $w$ to $w'$ with length at most $L_*+1$. If there are $x\in \Psi_wK,y\in \Psi_{w'}K$ so that $d(x,y)< c_*k^{-n}$, then we can find $x'\in \Psi_w(K\setminus \partial\square)$ and $y'\in \Psi_{w'}(K\setminus \partial\square)$ so that $d(x',y')<c_*k^{-n}$. In particular, $\Psi_wK$ and $\Psi_{w'}K$ are the only $n$-cells containing $x'$ and $y'$ respectively. By (\textbf{A3})-(a) with respect to $x'$ and $y'$, we know there is a path from $w$ to $w'$ with length less than $L_*+1$.  A contradiction.	
\end{proof}

\section{Cell graph energies and Poincar\'e constants}\label{sec3}
In this section, we introduce the cell graph energies and the Poincar\'e constants. The story in this section is similar to that in \cite{CQ1} for $\USC$, which is inspired from the celebrated work of Kusuoka-Zhou \cite{KZ}. Throughout this paper, for a set $A$, we always use $l(A)$ to denote the collection of real functions on $A$.\vspace{0.2cm}

\noindent(\textbf{\emph{Cell graph energies}}). For a connected set $A\subset W_n$, we define a symmetric bilinear form $\mcD_{n,A}$ on  $l(A)$ as,
\[\mcD_{n,A}(f,g)=\sum_{\substack{w,w'\in A\\w\sn w'}}\big(f(w)-f(w')\big)\big(g(w)-g(w')\big), \quad \forall f,g\in l(A),\]
and call $\mcD_{n,A}$ a \textit{graph energy form} on $A$.
In particular, we write $\mcD_{n,A}(f):=\mcD_{n,A}(f,f)$, and  $\mcD_{n}:=\mcD_{n,W_n}$ for short.\vspace{0.2cm}

\noindent(\textbf{\emph{Averages of functions}}). Let $\nu$ be a Borel measure on a topological space $A$, and let $f$ be a measurable function on $A$. We use the notation
\[\fint_A fd\nu=\frac{1}{\nu(A)}\int_A fd\nu\]
to denote the  \textit{average} of $f$ on $A$ with respect to $\nu$. \vspace{0.2cm}

\noindent(\textbf{\emph{Counting measures}}). We always let $\mu_n$ be the counting measure on $W_n$, i.e. $\mu_n=\sum_{w\in W_n}\delta_{w}$, where $\delta_{w}$ is the delta mass at $w$ ($\delta(A)=0$ if $w\notin A$; $\delta(A)=1$ if $w\in A$).  \vspace{0.2cm}

\begin{definition}[Poincar\'e constants \cite{KZ}]\label{def31}\

(a). For $n\geq 1$, define
	\[\lambda_n=\sup\big\{\sum_{w\in W_n}\big(f(w)-\fint_{W_n}fd\mu_n\big)^2:f\in l(W_n), \mcD_n(f)=1\big\}.\]
	
	%(b). For $n\geq 1$, define
	%\[\lambda^{(D)}_n=\sup\big\{N^n[f]_{W_n}^2:f\in l(W_n),f|_{\partial W_n}=0, \mcD_n(f)=1\big\}.\]
	
	(b). For $A,B\subset W_n$ with $A\cap B=\emptyset$,  define \[R_{n}(A,B)=\max\big\{\big(\mcD_n(f)\big)^{-1}:f\in l(W_n),f|_A=0,f|_B=1\big\},\] the \textit{effective resistance} between $A$ and $B$.
	
	Let $L_*$ be the constant defined in (\textbf{A3}). For $w\in W_*$, we write
	\[
	\mathcal{N}_w=\bigcup\big\{w'\in W_{|w|}:\text{there is a path }w^{(0)}=w,\cdots,w^{(L)}=w'\text{ with  }L\leq L_*\big\},
	\]
	call it the \textit{$L_*$-adapted neighborhood} of $w$ (see  \cite[Chapter 2]{ki5} for the concept ``adaptedness''), and we abbreviate $W_{|w|}\setminus \mathcal{N}_w$ to $\mathcal {N}_w^c$.
	
	For $m\geq 1$, define
	\[R_m=\inf\big\{R_{{|w|+m}}(w\cdot W_m,  \mathcal{N}_w^c\cdot W_m):w\in W_*\setminus \{\emptyset\}\big\}.\]
	
	(c). For $m\geq 1$ and $w\sn w'$,  define
	\[
	\begin{aligned}
	\sigma_{m}(w,w')=\sup\big\{N^m\big(\fint_{w\cdot W_m}fd\mu_{n+m}-\fint_{w'\cdot W_m}fd\mu_{n+m}\big)^2:&f\in l(W_{n+m}), \\
	&\mcD_{n+m, \{w,w'\}\cdot W_m}(f)=1\big\}.
    \end{aligned}
    \]
	
	For $m\geq 1$, define
	\[\sigma_m=\sup\big\{\sigma_{m}(w,w'):n\geq 1, w,w'\in W_n,w\sn w'\big\}.\]
\end{definition}\vspace{0.2cm}

\begin{remark} In \cite{KZ}, another class of Poincar\'e type constants $\{\lambda^D_n\}_{n\geq 1}$ was also introduced. Similar to \cite{CQ1}, we will not use them directly. However, the developments in Sections \ref{sec5},\ref{sec6} are partially inspired by the mean escaping time interpretation of these constants.
\end{remark}

The following Proposition \ref{prop32} follows from exactly a same proof of \cite[Theorem 2.1]{KZ} (also see \cite[Proposition 3.2]{CQ1} for a reproduced version for $\mathcal{USC}$).
\begin{proposition}[{\cite[Theorem 2.1]{KZ}}]\label{prop32}
	For a fixed $\USC^{(3)}$ $K$, there is a constant $C>0$ such that
	\begin{equation}\label{eqn31}
		C^{-1}\cdot \lambda_nN^{m}R_m\leq \lambda_{n+m}\leq C\cdot \lambda_n\sigma_m, \quad\forall n\geq 1,m\geq 1,
	\end{equation}
	and,
	\begin{equation}\label{eqn32}
		R_n\geq C\cdot(k^2N^{-1})^n,\quad \forall n\geq 1.
	\end{equation}
	In addition, all the constants $\lambda_n,R_n$ and $\sigma_n$, $n\geq 1$ are positive and finite.
\end{proposition}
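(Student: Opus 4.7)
The plan is to follow the two-scale decomposition argument of Kusuoka and Zhou \cite{KZ}, which the authors have already adapted to the planar $\USC$ setting in \cite{CQ1}. The key idea is that the cell graph $(W_{n+m}, E_{n+m})$ has a natural nested structure: each $w \in W_n$ corresponds to a block $w \cdot W_m$ of $N^m$ sub-cells, and the geometric conditions (\textbf{A1})--(\textbf{A4}) already established allow energy and variance to transfer between scales in a controlled fashion, with the transfer rates precisely quantified by $\sigma_m$ and $R_m$.

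For the upper bound $\lambda_{n+m} \leq C\lambda_n\sigma_m$, I would fix $f \in l(W_{n+m})$ with $\mcD_{n+m}(f)=1$ and split the total variance into between-block and within-block parts via the block averages $\bar f(v) := \fint_{v\cdot W_m} f\, d\mu_{n+m}$ for $v \in W_n$. The between-block part is bounded by $\lambda_n\,\mcD_n(\bar f)$, and applying the definition of $\sigma_m$ edge by edge on $E_n$ then yields $\mcD_n(\bar f) \leq C N^{-m}\sigma_m\,\mcD_{n+m}(f)$, the constant $C$ absorbing a bounded-overlap factor coming from the fact that any edge of $E_{n+m}$ lies in only finitely many pairs $\{w,w'\}\cdot W_m$ by (\textbf{A3}) and bounded graph degree. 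The within-block variance is estimated analogously using $\sigma_m$ on a single block together with a finite-chain argument.

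For the lower bound $\lambda_{n+m} \geq C^{-1}\lambda_n N^m R_m$, I would take a near-extremal $g$ for $\lambda_n$ and lift it to $W_{n+m}$ using minimizers $h_w$ of $R_{|w|+m}(w\cdot W_m,\mathcal{N}_w^c\cdot W_m)$: roughly, the lift $f$ equals $g(w)$ on each block $w\cdot W_m$ and is glued across neighboring blocks through the $h_w$. A direct computation gives variance at least $c\,N^m\sum_w(g(w)-\bar g)^2$ and energy at most $CR_m^{-1}\mcD_n(g)$. The delicate point is to ensure that the cut-offs built from different $h_w$ do not interact destructively, which is precisely what the $L_*$-adapted neighborhood $\mathcal{N}_w$ appearing in the definition of $R_m$ is designed to enforce.

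The bound $R_n \geq C(k^2/N)^n$ is dual and purely geometric: for any $w$ and $m$, I would produce a test function on $W_{|w|+m}$ separating $w\cdot W_m$ from $\mathcal{N}_w^c\cdot W_m$ by pulling back a Lipschitz function on $\mathbb{R}^3$ of slope $\sim k^{|w|}$ (permissible because (\textbf{A3})-(b) guarantees Euclidean separation $\gtrsim c_* k^{-|w|}$) and evaluating it at the centers of $\Psi_{w'}K$; each of the $O(N^m)$ edges in the transition region then contributes $O(k^{-2m})$ to $\mcD_{|w|+m}(f)$, yielding the claimed upper bound on the conductance. Positivity and finiteness of $\lambda_n$, $R_n$, $\sigma_n$ follow directly from the variational definitions together with (\textbf{A2}) and the finiteness of $W_n$. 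The main anticipated obstacle is combinatorial rather than conceptual, namely matching the various overlap constants so that the two inequalities consolidate into a single $C$, and this is exactly why the proof of \cite[Theorem 2.1]{KZ} transports to the three-dimensional $\USC^{(3)}$ setting essentially without change.
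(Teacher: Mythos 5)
The paper does not prove this proposition itself; it records that it ``follows from exactly a same proof of \cite[Theorem 2.1]{KZ}'' and points to \cite[Proposition 3.2]{CQ1} for a reproduced version. Your sketch retraces precisely that Kusuoka--Zhou argument: the block-average decomposition into between-block and within-block variance for the upper bound, the lifting of a near-extremal function through resistance minimizers supported on $L_*$-adapted neighborhoods for the lower bound, and a pulled-back Euclidean Lipschitz cutoff of slope $\sim k^{|w|}$ for (\ref{eqn32}), so you are taking essentially the same route the paper relies on.

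The one step worth flagging is your treatment of the within-block variance. What the variational definition of $\lambda_m$ gives there is $\sum_{v\in W_n}\sum_{w'\in W_m}\big(f(v\cdot w')-\bar f(v)\big)^2 \leq \lambda_m\,\mcD_{n+m}(f)$, which does not directly involve $\sigma_m$: by its very definition $\sigma_m$ compares averages of two neighboring $m$-blocks, and there is no version of it ``on a single block.'' Converting the $\lambda_m\,\mcD_{n+m}(f)$ bound into the required $\lesssim \lambda_n\sigma_m\,\mcD_{n+m}(f)$ needs the auxiliary comparison $\lambda_m\lesssim\sigma_m$ (or $\lambda_m\lesssim\lambda_1\sigma_m$), which is exactly the sort of statement that Remark \ref{remark33} later derives from \eqref{eqn31} itself, so one must be careful that the Kusuoka--Zhou bootstrap is not circular. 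Your phrase ``using $\sigma_m$ on a single block with a finite-chain argument'' gestures at this but does not deliver it, and that is the one place where a self-contained write-up would need genuine additional work. Since the paper treats the whole proposition as a citation, this is a gap only relative to a fully spelled-out proof, not relative to what the paper itself does.
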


\begin{remark}\label{remark33}
	By letting $m=1$ in \eqref{eqn31}, one immediately see that $\lambda_n\asymp\lambda_{n+1}$. Then by letting $n=1$ in \eqref{eqn31}, one can easily see that
	\begin{equation}\label{eqn33}
		k^{2n}\lesssim N^nR_n\lesssim \lambda_n\lesssim \sigma_n.
	\end{equation}
	In addition, we will soon see (Lemma \ref{lemma33}) that $\sigma_n\lesssim\lambda_n$. This together with (\ref{eqn33}) implies that $ \sigma_n\asymp \lambda_n\asymp\lambda_{n+1}\asymp\sigma_{n+1}$.
\end{remark}

\section{Averages of functions and Poincar\'e constants $\mathscr{R}_n$}\label{sec4}
Just like in \cite{CQ1}, the key ingredient of this paper is to prove a capacity estimate that $R_n\gtrsim N^{-n}\lambda_n$, $\forall n\geq 1$, which will be achieved through five sections, Sections \ref{sec4}-\ref{sec8}.

In this section, we focus on estimating the difference of averages of a function with respect to different measures. We will end this section with a lower bound estimate, $\mathscr{R}_n\gtrsim N^{-n}\lambda_n$, of a new class of Poincar\'e type constants $\mathscr{R}_n$, $n\geq 1$, which deals with the difference of  averages  on neighboring faces of a function. We will improve this estimate to arrive at a face-to-face resistance estimate with the aid of a random walk argument in Sections \ref{sec5}-\ref{sec7}.\vspace{0.2cm}

\noindent(\textbf{\emph{The constants $\mathscr{R}_n$}}). For $n\geq 1$, we define
\[
\mathscr{R}_n=\sup\big\{\frac{\big|\fint_{\tilde{F}_{n,1,0}}fd\mu_n -\fint_{\tilde{F}_{n,2,0}}fd\mu_n\big|^2}{\mcD_n(f)}\big\},
\]
where the supreme is taken over all non-constant functions in $l(W_n)$.
\vspace{0.2cm}

In view of the following lemma, roughly speaking, $\mathscr{R}_n$ provides the largest possible difference among the averages of $f$ on different faces, controlled by its graph energy.

\begin{lemma}\label{lemma40}
	For $n\geq 1$, we always have
	\[
	\frac{1}{4}\mathscr{R}_n\leq \big\{\frac{\big|\fint_{\tilde{F}_{n,1,0}}fd\mu_n -\fint_{\tilde{F}_{n,1,1}}fd\mu_n\big|^2}{\mcD_n(f)}\big\}\leq 4\mathscr{R}_n.
	\]
\end{lemma}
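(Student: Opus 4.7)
My plan exploits the group $\msG$ of self-isometries of $\square$. Each $\Gamma\in\msG$ induces a bijection $\pi_\Gamma$ of $W_n$ which is a graph automorphism of $(W_n,E_n)$, so the pullback $f\mapsto f\circ\pi_\Gamma$ preserves $\mcD_n$ and permutes the face averages $\bar f_{o,s}:=\fint_{\tilde F_{n,o,s}}f\,d\mu_n$ according to the induced action on the faces of $\square$. Because $\msG$ acts transitively on ordered opposite face pairs and transitively on ordered neighbouring face pairs, the supremum $A:=\sup_f|\bar f_{1,0}-\bar f_{1,1}|^2/\mcD_n(f)$ that appears in the middle of the lemma is the same over any opposite pair, and $\mathscr R_n$ is the same over any neighbouring pair.

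The upper bound $A\leq 4\mathscr R_n$ follows at once from a single triangle inequality through the intermediate face $(2,0)$:
\[
|\bar f_{1,0}-\bar f_{1,1}|^2\;\leq\;2|\bar f_{1,0}-\bar f_{2,0}|^2+2|\bar f_{2,0}-\bar f_{1,1}|^2\;\leq\;4\mathscr R_n\mcD_n(f),
\]
since both $\{(1,0),(2,0)\}$ and $\{(2,0),(1,1)\}$ are neighbouring pairs.

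For the reverse direction $\mathscr R_n\leq 4A$, the naive triangle argument through $(1,1)$ is circular because $\{(1,1),(2,0)\}$ is itself neighbouring; in fact any face chain from $F_{1,0}$ to $F_{2,0}$ must change axis at least once and therefore contain a neighbouring link. My plan is instead to use the $\Gamma_{[1,2]}$-antisymmetric projection $g:=\tfrac12(f-f\circ\pi_{\Gamma_{[1,2]}})$, which satisfies $\mcD_n(g)\leq\mcD_n(f)$ and, using $\Gamma_{[1,2]}(1,0)=(2,0)$, $\Gamma_{[1,2]}(1,1)=(2,1)$, yields $\bar g_{1,0}=\tfrac12(\bar f_{1,0}-\bar f_{2,0})$ and $\bar g_{1,1}=\tfrac12(\bar f_{1,1}-\bar f_{2,1})$. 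Applying the opposite-face bound to $g$ controls $|\bar g_{1,0}-\bar g_{1,1}|^2$ by $A\mcD_n(f)$, i.e.\ $|(\bar f_{1,0}-\bar f_{2,0})-(\bar f_{1,1}-\bar f_{2,1})|^2\leq 4A\mcD_n(f)$. A companion symmetrisation, for instance pulling back through the $4$-cycle $\Gamma_{[1]}\Gamma_{[1,2]}$ that sends $(1,0)\mapsto(2,0)\mapsto(1,1)\mapsto(2,1)\mapsto(1,0)$, is designed to supply an analogous control on $|(\bar f_{1,0}-\bar f_{2,0})+(\bar f_{1,1}-\bar f_{2,1})|^2$; the parallelogram identity $2X^2+2Y^2=(X+Y)^2+(X-Y)^2$ with $X=\bar f_{1,0}-\bar f_{2,0}$ and $Y=\bar f_{1,1}-\bar f_{2,1}$ then isolates $|\bar f_{1,0}-\bar f_{2,0}|^2\leq 4A\mcD_n(f)$. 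The main obstacle is producing this companion bound with constant $A$ rather than $\mathscr R_n$: naively projecting onto an eigenspace of the $4$-cycle gives a quantity whose natural estimate is by the neighbouring-pair sup, so closing the argument requires selecting the right combination of reflections so that the resulting opposite-face bound genuinely converts $X+Y$ into an opposite-face difference of a secondary auxiliary function whose energy is still dominated by $\mcD_n(f)$.
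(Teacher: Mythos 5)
Your upper bound $A\leq 4\mathscr R_n$ is exactly the paper's (a triangle inequality through $F_{2,0}$). For the reverse bound $\mathscr R_n\leq 4A$, the obstacle you flag is a genuine gap rather than a technicality to be tidied up: the $\Gamma_{[1,2]}$-antisymmetrisation does give $|X-Y|^2\leq 4A\,\mcD_n(f)$ with $X=\bar f_{1,0}-\bar f_{2,0}$, $Y=\bar f_{1,1}-\bar f_{2,1}$, but the symmetrisations you suggest for $X+Y$ (via the $4$-cycle $\Gamma_{[1]}\Gamma_{[1,2]}$, or via the reflection fixing $\{x_1+x_2=1\}$) control quantities like $(\bar f_{1,0}-\bar f_{1,1})+(\bar f_{2,0}-\bar f_{2,1})$, a combination of \emph{opposite}-face differences that is not equal to $X+Y$. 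The parallelogram identity therefore cannot be closed from the symmetries alone, and your plan stops where you say it stops.

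The paper does not attempt two separate estimates and a parallelogram. It picks $g$ attaining the supremum defining $\mathscr R_n$, takes $\Gamma$ to be the reflection in the plane $\{x_1+x_2=1\}$ (the unique reflection carrying $F_{2,0}$ to $F_{1,1}$), and \emph{folds} $g$ across this plane: $g'(w):=g(w)$ on cells with $d(\Psi_wK,F_{2,0})\leq d(\Psi_wK,F_{1,1})$ and $g'(w):=g\circ\tilde\Gamma(w)$ otherwise. Every cell of $\tilde F_{n,1,0}$ falls in the first case and $\tilde\Gamma$ carries $\tilde F_{n,1,1}$ onto $\tilde F_{n,2,0}$, so
\[
\Big|\fint_{\tilde F_{n,1,0}}g'\,d\mu_n-\fint_{\tilde F_{n,1,1}}g'\,d\mu_n\Big|=\Big|\fint_{\tilde F_{n,1,0}}g\,d\mu_n-\fint_{\tilde F_{n,2,0}}g\,d\mu_n\Big|,
\]
while the fold at most quadruples the energy, $\mcD_n(g')\leq 4\mcD_n(g)$; this gives $A\geq\tfrac14\mathscr R_n$ in a single step. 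This fold-the-maximiser construction is the ingredient your proposal is missing.
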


\begin{proof}
	Denote $\tilde {\mathscr R}_n=\sup\big\{ {\big |\fint_{\tilde{F}_{n,1,0}}fd\mu_n -\fint_{\tilde{F}_{n,1,1}}fd\mu_n\big |^2}/{\mcD_n(f)}\big\}$ for short. Pick a function $g\in l(W_n)$ attaining the supremum in the definition of $\mathscr R_n$.
	Let $\Gamma\in \mathscr{G}$ be the reflection symmetry that maps $F_{2,0}$ to $F_{1,1}$, and write $\tilde\Gamma: W_n\to W_n$ the induced map satisfying $\Psi_{\tilde {\Gamma}(w)}K=\Gamma(\Psi_w K)$, $\forall w\in W_n$. Define
	\[g'(w)=
	\begin{cases}
		g(w),&\text{ if }d(\Psi_wK,F_{2,0})\leq d(\Psi_wK,F_{1,1}),\\
		g\circ \tilde{\Gamma}(w),&\text{ if }	d(\Psi_wK,F_{2,0})>d(\Psi_wK,F_{1,1}).
	\end{cases}\]
	Then,
	\[
	\begin{cases}
		\big|\fint_{\tilde{F}_{n,1,0}}g'd\mu_{n}-\fint_{\tilde{F}_{n,1,1}}g'd\mu_{n}\big|=\big|\fint_{\tilde{F}_{n,1,0}}gd\mu_{n}-\fint_{\tilde{F}_{n,2,0}}gd\mu_{n}\big|,\\
		\mcD_{n}(g')\leq 4\mcD_{n}(g),
	\end{cases}
	\]
	which gives $\tilde{\mathscr R}_n\geq \frac 1 4 \mathscr R_n$.

	On the other hand, for any function $f\in l(W_n)$, since
	$$\big|\fint_{\tilde{F}_{n,1,0}}fd\mu_n -\fint_{\tilde{F}_{n,1,1}}fd\mu_n\big|\leq \big|\fint_{\tilde{F}_{n,1,0}}fd\mu_n -\fint_{\tilde{F}_{n,2,0}}fd\mu_n\big|+\big|\fint_{\tilde{F}_{n,1,1}}fd\mu_n -\fint_{\tilde{F}_{n,2,0}}fd\mu_n\big|,$$
	we immediately have $\tilde{\mathscr{R}}_n\leq 4 {\mathscr R}_n$.
\end{proof}

\subsection{Preliminary estimates}
The following Proposition \ref{prop41} can be viewed as a special case of \cite[Lemma 3.9]{KZ}. Concrete examples of measures are provided in Example \ref{example42}, with a new application inspired by \cite{M2}.

\begin{proposition}\label{prop41}
	Let $\nu$ be a measure on $W_n$ such that
	\begin{equation}\label{eqn41}
		\frac{\max_{w\in W_m}\nu(w\cdot W_{n-m})}{\nu(W_n)}\leq C_1\cdot k^{-m},\qquad \forall 1\leq m\leq n, w\in W_m,
	\end{equation}
	for some $C_1>0$. Then, there exists $C_2>0$ independent of $C_1,n$ and $\nu$ such that
	\[\big|\fint_{W_n} fd\nu-\fint_{W_n} fd\mu_n\big|\leq C_2\sqrt{C_1\cdot N^{-n}\lambda_n\cdot\mcD_n(f)},\quad\forall f\in l(W_n).\]
	
\end{proposition}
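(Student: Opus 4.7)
The plan is a multi-scale telescoping argument that crucially exploits the hierarchical mass bound \eqref{eqn41} at every scale $1 \le m \le n$, not just at the finest scale. For $m = 0, 1, \ldots, n$ and $w \in W_m$, define the block averages
\[
\bar{f}_m(w) := \fint_{w \cdot W_{n-m}} f\, d\mu_{n-m},
\]
so that $\bar{f}_0 = \fint_{W_n} f\, d\mu_n$ and $\bar{f}_n = f$, and set
\[
b_m := \sum_{w \in W_m} \frac{\nu(w \cdot W_{n-m})}{\nu(W_n)}\, \bar{f}_m(w),
\]
so $b_0 = \fint_{W_n} f\, d\mu_n$ and $b_n = \fint_{W_n} f\, d\nu$. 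The triangle inequality reduces the task to controlling $\sum_{m=1}^n |b_m - b_{m-1}|$.

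For the $m$-th increment, I would use the martingale identity $\bar{f}_{m-1}(w') = \frac{1}{N}\sum_i \bar{f}_m(w'\cdot i)$ to center each summand about its parent average, writing
\[
b_m - b_{m-1} = \sum_{w' \in W_{m-1}} \sum_{i=1}^N q_{w'\cdot i}\bigl(\bar{f}_m(w'\cdot i) - \bar{f}_{m-1}(w')\bigr),
\]
where $q_{w'\cdot i} := \frac{\nu((w'\cdot i)\cdot W_{n-m})}{\nu(W_n)} - \frac{1}{N}\cdot\frac{\nu(w'\cdot W_{n-m+1})}{\nu(W_n)}$ has zero mean across siblings. A global Cauchy--Schwarz gives $|b_m - b_{m-1}|^2 \le \bigl(\sum_w q_w^2\bigr) \cdot \sum_w \bigl(\bar{f}_m(w) - \bar{f}_{m-1}(\pi w)\bigr)^2$, where $\pi : W_m \to W_{m-1}$ is the parent map. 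The hypothesis \eqref{eqn41} applied at scales $m$ and $m-1$ yields $\sum_w q_w^2 \lesssim C_1 k^{-m}$. The within-parent variance is then controlled by combining a Poincar\'e inequality on each parent cluster $(w'\cdot W_1, E_m|_{w'\cdot W_1})$ (whose constant is uniform over $w'$ and $m$ via Lemma \ref{lemma28}, possibly using a mild $L_*$-adapted enlargement to guarantee connectivity when $w'$ is a boundary word) with the defining property of $\sigma_{n-m}$ applied to each sibling pair, namely $(\bar{f}_m(w'\cdot i) - \bar{f}_m(w'\cdot j))^2 \le \sigma_{n-m}\, N^{-(n-m)}\, \mcD_{n,\{w'\cdot i,\, w'\cdot j\}\cdot W_{n-m}}(f)$; summing with bounded edge multiplicity gives $\sum_w (\bar{f}_m - \bar{f}_{m-1}\circ\pi)^2 \lesssim \sigma_{n-m}\, N^{-(n-m)}\, \mcD_n(f)$.

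To assemble the bound, I would invoke the scale relations from Proposition \ref{prop32}: $\sigma_{n-m} \asymp \lambda_{n-m}$ (Remark \ref{remark33}) and $\lambda_n \gtrsim \lambda_{n-m} N^m R_m$ with $R_m \gtrsim (k^2/N)^m$, which combine to yield $\sigma_{n-m} \lesssim \lambda_n/k^{2m}$. Substituting gives
\[
|b_m - b_{m-1}|^2 \lesssim C_1 \cdot N^{-n}\, \lambda_n\, \mcD_n(f) \cdot (N/k^3)^m.
\]
Because $N \le k^3 - 1 < k^3$ (equivalently $d_H < 3$, built into the definition of $\USC^{(3)}$), the geometric series $\sum_{m=1}^n (N/k^3)^{m/2}$ is summable with bound depending only on $K$, and the triangle inequality closes the argument.

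The main obstacle is producing the exponent $N^{-n}$ rather than the weaker $k^{-n}$ that a single one-shot Cauchy--Schwarz — using only the finest-scale bound $\nu(\{w\})/\nu(W_n) \le C_1 k^{-n}$ and Poincar\'e at level $n$ — would yield. Since $N > k$, this naive bound is genuinely too weak; one must exploit \eqref{eqn41} at every intermediate scale together with the $\sigma_{n-m}$-mediated contraction of variance under averaging. A secondary technical nuisance is the uniform control of Poincar\'e constants on parent clusters, where the distinction between Cases 1 and 2 of Definition \ref{def210} can disconnect the raw restriction; I expect this to be handled by passing to an $L_*$-adapted neighborhood, so that uniform connectivity and hence a uniform Poincar\'e constant follow from Lemma \ref{lemma28}.
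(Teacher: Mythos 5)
Your proposal follows the same multi-scale telescoping structure as the paper's proof: your block averages $\bar{f}_m$, read back to $W_n$, are the paper's level-$m$ piecewise averages $f_m$; your $b_m=\fint_{W_n}f_m\,d\nu$; the bound $\sum_w q_w^2\lesssim C_1 k^{-m}$ plays exactly the role of the paper's use of $\max_v\nu(v\cdot W_{n-m})/\nu(W_n)\leq C_1 k^{-m}$; and the closing geometric series in $(N/k^3)^{m/2}$ is identical.

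The genuine gap is a circularity in the within-parent variance step. You decompose the parent-to-child averaging into a Poincar\'e inequality at scale $1$ followed by a sibling-pair bound via $\sigma_{n-m}$, and then invoke $\sigma_{n-m}\asymp\lambda_{n-m}$ citing Remark \ref{remark33}. But that remark only previews Lemma \ref{lemma33}, which is proved later in Section \ref{sec4} \emph{using Proposition \ref{prop41} itself}. Before Proposition \ref{prop41}, the available inputs (\ref{eqn31})--(\ref{eqn33}) give $\lambda_{n-m}\lesssim\sigma_{n-m}$ but not the reverse, so the intermediate bound $\sigma_{n-m}\lesssim\lambda_n/k^{2m}$ cannot yet be drawn. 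The fix is to drop the $\sigma$-detour and apply the $\lambda_{n-m+1}$-Poincar\'e inequality directly to $f$ restricted to each block $w'\cdot W_{n-m+1}$: by Jensen, $\sum_{i\in W_1}\big|\fint_{w'i\cdot W_{n-m}}f-\fint_{w'\cdot W_{n-m+1}}f\big|^2\leq N^{m-n}\lambda_{n-m+1}\mcD_{n,w'\cdot W_{n-m+1}}(f)$, and summing over $w'$ plus $N^{m-n}\lambda_{n-m+1}\lesssim(N/k^2)^m N^{-n}\lambda_n$ (from (\ref{eqn31}) and (\ref{eqn32})) closes the argument with no appeal to $\sigma$ or Lemma \ref{lemma33} --- which is exactly what the paper does. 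Your secondary worry about connectivity of parent clusters becomes moot once you do this, and is in any case unfounded: for every $w'\in W_{m-1}$, the induced graph on $w'\cdot W_1$ contains an isomorphic copy of $(W_1,E_1)$ since Case~2 adjacency in Definition \ref{def210} is strictly more permissive than Case~1, so it is connected by (\textbf{A2}) without any $L_*$-adapted enlargement.
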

\begin{proof}
	Without loss of generality, we assume that $\nu$ is a probability measure. Then \eqref{eqn41} simply means $\max_{w\in W_m}\nu(w\cdot W_{n-m})\leq C_1\cdot k^{-m}$ for each $1\leq m\leq n$.
	
	For each $0\leq m\leq n$, we define $f_m\in l(W_n)$ as
	\[f_m(w\cdot w')=\fint_{w\cdot W_{n-m}}fd\mu_n,\qquad\forall w\in W_m,w'\in W_{n-m}.\]
	In other words, $f_m$ can be viewed as the level-$m$ piecewise average of $f$ with respect to $\mu_n$, and in particular, $f_0$ is taking the average value $\fint_{W_n} fd\mu_n$ everywhere in $W_n$.
	
	By using the definition of $\lambda_{n-m+1}$ locally on each block $w\cdot W_{n-m+1}$, and by using the Cauchy-Schwarz inequality, for each $1\leq m\leq n$, one can see
	\[
	\begin{aligned}
		&\sum_{w\in W_{m-1}}\sum_{i\in W_1}\big|\fint_{w\cdot W_{n-m+1}}fd\mu_n-\fint_{w\cdot i\cdot W_{n-m}}fd\mu_n\big|^2\\
		\leq&\sum_{w\in W_{m-1}}\lambda_{n-m+1}N^{-n+m}\mcD_{n,w\cdot W_{n-m+1}}(f)\\
		\leq &\lambda_{n-m+1}N^{-n+m}\cdot\mcD_n(f).
	\end{aligned}
	\]
	Then by the definition of $f_m$, we can see that
	\[
	\begin{aligned}
		\|f_m-f_{m-1}\|^2_{l^2(W_n,\nu)}&\leq  \sum_{w\in W_{m-1}}\sum_{i\in W_1}\max_{v\in W_m}\nu(v\cdot W_{n-m})\cdot\big|\fint_{w\cdot W_{n-m+1}}fd\mu_n-\fint_{w\cdot i\cdot W_{n-m}}fd\mu_n\big|^2\\
		&\leq C_1\cdot N^{-n+m}\lambda_{n-m+1}k^{-m}\cdot\mcD_n(f).
	\end{aligned}
	\]
	Hence, noticing that by (\ref{eqn31}) and (\ref{eqn32}), $N^{-n+m}\lambda_{n-m+1}\leq C_3 (k^2N^{-1})^{-m}N^{-n}\lambda_n$ for some $C_3>0$ independent of $C_1, n$ and $\nu$, it follows that
	\[\|f_m-f_{m-1}\|^2_{l^2(W_n,\nu)}\leq C_3C_1\cdot(k^{-3}N)^m\cdot N^{-n}\lambda_n\cdot\mcD_n(f).\]
	Using the Minkowski inequality, and noticing that $f=f_n$, we finally see
	\[\|f-f_0\|_{l^1(W_n,\nu)}\leq\|f-f_0\|_{l^2(W_n,\nu)}\leq \sum_{m=1}^n\|f_m-f_{m-1}\|_{l^2(W_n,\nu)}\leq C_2\sqrt{C_1\cdot N^{-n}\lambda_n\cdot \mcD_n(f)},\]
	for some $C_2>0$ independent of $C_1, n$ and $\nu$, which is the desired estimate.
\end{proof}

Here we list some measures on $W_n$ that we will use. \vspace{0.2cm}

\noindent\textbf{Notations.} Let $1\leq m<n$ and $w\sm w'\in W_m$.  We write
\[
\mathcal{J}_n(w,w')=\{\eta\in W_{n-m}:w\cdot \eta\sn w'\cdot W_{n-m}\}\subset W_{n-m},
\]
and let
\[\mathcal{I}_n(w,w')=w\cdot \mathcal{J}_n(w,w')\subset W_n.\]
Here, we need to highlight that the definitions of  $\mathcal{J}_n(w,w')$ and $\mathcal{I}_n(w,w')$ depend on the order of $w,w'$.

\begin{example}\label{example42}\
	
	(a). Let $w,w'\in W_m$ such that $w\sm w'$, and let $\nu$ be the counting measure on $\mathcal{I}_n(w,w')$. More precisely, $\nu$ is the restriction of $\mu_n$ on $\mathcal{I}_n(w,w')$, i.e.
	\[\fint_{W_n}fd\nu=\fint_{\mathcal{I}_n(w,w')}fd\mu_n,\quad \forall f\in l(W_n).\]
	It is easy to see that $\nu$ satisfies (\ref{eqn41}) with some constant $C_1>0$ depending only on $K$.
	
	(b). Let $I_n=\{w\in W_n: \Psi_wK\cap H_{1,1/2}\neq \emptyset\}$. Let $B$ be a connected subset of $W_n$ such that
	\[B\cap \tilde{F}_{n,1,1}\neq \emptyset,\quad B\cap I_n\neq \emptyset.\]
	For each $i\in\{1,\cdots,\lfloor k^n/2\rfloor\}$, we choose $w^{(i)}\in B$ such that $\Psi_{w^{(i)}}H_{1,0}= H_{1,l}$ for some $\frac{k^n-i}{k^n}\leq l<\frac{k^n-i+1}{k^n}$. Let $\nu$ be a measure supported on $B$ defined as
		\[\nu=\sum_{i=1}^{\lfloor k^n/2\rfloor}\delta_{w^{(i)}}.\]
		Then one can check $\nu$ satisfies (\ref{eqn41}) with $C_1\leq 4$.
\end{example}

Example \ref{example42} (b) will be used in Section \ref{sec7}, and this is the only place that the setting $K\subset\mathbb{R}^3$ plays an essential role among Sections \ref{sec2}-\ref{sec7}.

	Finally, we end this subsection with an easy observation.
	
	\begin{corollary}\label{coro44}
		Let $n>m\geq 0$,  $A\subset W_m$. Let $\nu$ be a measure on $A\cdot W_{n-m}$ satisfying that
		\[
		\nu(w\cdot W_{n-m})=\frac{1}{\#A}\nu(A\cdot W_{n-m}), \quad \forall w\in A,
		\]
		and
		\[
		\frac{\max_{w'\in W_i}\nu(w\cdot w'\cdot W_{n-m-i})}{\nu(w\cdot W_{n-m})}\leq C_1\cdot k^{-i}, \quad\forall 1\leq i\leq n-m, \forall w\in A,
		\]
		for some constant $C_1>0$. Then, for each $f\in l(A\cdot W_{n-m})$, we have
		\[
		\big|\fint_{A\cdot W_{n-m}} fd\nu-\fint_{A\cdot W_{n-m}} fd\mu_n\big|\leq C_2(\frac{N^mk^{-2m}}{\#A})^{1/2}\sqrt{C_1N^{-n}\lambda_n\mcD_{n,A\cdot W_{n-m}}(f)}
		\]
		for some constant $C_2>0$ depending only on $K$.
	\end{corollary}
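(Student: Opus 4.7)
The plan is to imitate the telescoping argument of Proposition \ref{prop41} verbatim, merely shifting the coarsest scale from level $0$ up to level $m$. For $m\leq l\leq n$, I will define the level-$l$ block averages
\[
f_l(w\cdot w'\cdot w'')=\fint_{w\cdot w'\cdot W_{n-l}}fd\mu_n,\qquad w\in A,\ w'\in W_{l-m},\ w''\in W_{n-l},
\]
so that $f_n=f$ and $f_m$ is constant on each ``$A$-block'' $w\cdot W_{n-m}$. The uniformity assumption $\nu(w\cdot W_{n-m})=\nu(A\cdot W_{n-m})/\#A$ forces
\[
\fint_{A\cdot W_{n-m}}f_m\,d\nu=\frac{1}{\#A}\sum_{w\in A}\fint_{w\cdot W_{n-m}}f\,d\mu_n=\fint_{A\cdot W_{n-m}}f\,d\mu_n,
\]
so by Cauchy--Schwarz the target quantity is controlled by $\nu(A\cdot W_{n-m})^{-1/2}\|f-f_m\|_{l^2(\nu)}$, and Minkowski reduces the task to bounding each $\|f_l-f_{l-1}\|_{l^2(\nu)}$ for $m<l\leq n$.

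For fixed $l$, applying the decay hypothesis at index $i=l-m$ yields $\nu(w\cdot v\cdot i\cdot W_{n-l})\leq C_1k^{-(l-m)}\nu(A\cdot W_{n-m})/\#A$ for every $w\in A$, $v\in W_{l-m-1}$, $i\in W_1$. On the other hand, the local application of the definition of $\lambda_{n-l+1}$ on every sub-block $w\cdot v\cdot W_{n-l+1}$ combined with Cauchy--Schwarz produces, exactly as in the proof of Proposition \ref{prop41}, the inequality
\[
\sum_{w\in A}\sum_{v\in W_{l-m-1}}\sum_{i\in W_1}\bigl|\fint_{w\cdot v\cdot W_{n-l+1}}fd\mu_n-\fint_{w\cdot v\cdot i\cdot W_{n-l}}fd\mu_n\bigr|^2\leq \lambda_{n-l+1}N^{-n+l}\mcD_{n,A\cdot W_{n-m}}(f).
\]
Multiplying by the $\nu$-weight bound and summing gives $\|f_l-f_{l-1}\|_{l^2(\nu)}^2\leq C_1\nu(A\cdot W_{n-m})(\#A)^{-1}k^{-(l-m)}\lambda_{n-l+1}N^{-n+l}\mcD_{n,A\cdot W_{n-m}}(f)$.

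To close the sum I invoke (\ref{eqn31})--(\ref{eqn32}) in the form $N^{-n+l}\lambda_{n-l+1}\lesssim (N/k^2)^lN^{-n}\lambda_n$, which recasts the summand as a constant multiple of $k^m(N/k^3)^l\cdot C_1\nu(A\cdot W_{n-m})(\#A)^{-1}N^{-n}\lambda_n\mcD_{n,A\cdot W_{n-m}}(f)$. Since $N\leq k^3-1$, the ratio $N/k^3$ is strictly less than $1$, so after taking square roots and summing $l=m+1,\dots,n$ via Minkowski, the geometric series is dominated by its first term; simplifying $k^m(N/k^3)^{m+1}\asymp N^mk^{-2m}$ recovers the advertised factor, and dividing by $\sqrt{\nu(A\cdot W_{n-m})}$ finishes the proof. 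No conceptual obstacle appears --- the argument is a mechanical restriction of Proposition \ref{prop41} to the $A$-indexed sub-collection; the only thing to watch is verifying that the shift of the starting level from $0$ to $m$ inserts exactly the promised factor $N^mk^{-2m}/\#A$ rather than some other combination.
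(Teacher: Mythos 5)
The proposal is correct and takes essentially the same route as the paper: where the paper applies Proposition \ref{prop41} locally as a black box on each block $w\cdot W_{n-m}$, $w\in A$, and then averages across $A$ with Cauchy--Schwarz, you unroll that black box into its underlying telescoping steps between levels $m$ and $n$, tracking the $\nu$-weight bound and the Poincar\'e constant at each step before summing the geometric series. The bookkeeping is equivalent (both ultimately rely on $\lambda_{n-m}\lesssim k^{-2m}\lambda_n$ from Proposition \ref{prop32} and on $N<k^3$) and produces the advertised factor $N^m k^{-2m}/\#A$.
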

	\begin{proof}
		By applying Proposition \ref{prop41} locally, and using Proposition \ref{prop32}, we see
		\[
		\begin{aligned}
			\sum_{w\in A}\big|\fint_{w\cdot W_{n-m}} fd\nu-\fint_{w\cdot W_{n-m}} fd\mu_n\big|^2&\leq C_1C_3^2\sum_{w\in A} N^{-n+m}\lambda_{n-m}\mcD_{n,w\cdot W_{n-m}}(f)\\
			&\leq C_1C_3^2C_4\cdot N^{m}k^{-2m}\cdot N^{-n}\lambda_n\cdot\mcD_{n,A\cdot W_{n-m}}(f),
		\end{aligned}
		\]
		for some constants $C_3,C_4>0$ depending only on $K$. The corollary follows from the Cauchy-Schwarz inequality immediately.
\end{proof}

\subsection{An estimate of $\mathscr{R}_n$}
In this subsection, we aim to show that $\mathscr{R}_n$ is bounded below by $N^{-n}\lambda_n$ up to a constant multiplier.   \vspace{0.2cm}

\begin{lemma}\label{lemma44}
	Let $1\leq m<n$ and $w\sm w'\in W_m$. Then for any $f\in l(\{w,w'\}\cdot W_{n-m})$,
	\[
	\big|\fint_{\mathcal I_n(w,w')} fd\mu_n-\fint_{\mathcal I_n(w',w)} fd\mu_n\big|\leq C\cdot\sqrt{k^{-(n-m)}\mcD_{n,\{w,w'\}\cdot W_{n-m}}(f)},
\]
	\[
	\big|\fint_{\mathcal I_n(w,w')} fd\mu_n-\fint_{\mathcal I_n(w',w)} fd\mu_n\big|\leq C\cdot\sqrt{N^nk^{{-3n+m}}}\cdot\sqrt{N^{-n}\lambda_n\mcD_{n,\{w,w'\}\cdot W_{n-m}}(f)},
	\]
	for some $C>0$ depending only on $K$.
\end{lemma}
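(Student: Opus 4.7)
Both inequalities are closely related. In fact the second follows from the first: by the chain in \eqref{eqn33}, $\lambda_n \gtrsim k^{2n}$, so $k^{-(n-m)} = k^{-3n+m}\cdot k^{2n} \lesssim k^{-3n+m}\lambda_n$, and any bound of the form $C\cdot k^{-(n-m)}\mcD_{n,\{w,w'\}\cdot W_{n-m}}(f)$ upgrades (with a new constant) to $C'\cdot k^{-3n+m}\lambda_n\mcD_{n,\{w,w'\}\cdot W_{n-m}}(f) = C'\cdot N^n k^{-3n+m}\cdot N^{-n}\lambda_n\mcD_{n,\{w,w'\}\cdot W_{n-m}}(f)$. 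Therefore the task reduces to proving the first inequality.

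For the first inequality, the plan is a direct combinatorial pairing between $\mathcal I_n(w,w')$ and $\mathcal I_n(w',w)$. As a preliminary, I would verify case-by-case in Definition \ref{def210} that both $|\mathcal J_n(w,w')|$ and $|\mathcal J_n(w',w)|$ are comparable up to a universal constant and bounded below by $c k^{n-m}$: the worst case is Case 2 with a line-segment intersection of length $\geq Ck^{-m}$, producing $\asymp k^{n-m}$ cells on each side of the segment, while Case 1 (face-to-face) and Case 2 with rectangular intersection yield the stronger $\asymp k^{2(n-m)}$.

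Next, by the very definition of $\mathcal J_n(w,w')$ together with the bounded-degree property of the cell graph $(W_n, E_n)$, there exists a map $\sigma:\mathcal J_n(w,w')\to\mathcal J_n(w',w)$ satisfying $w\cdot\eta\sn w'\cdot\sigma(\eta)$ for every $\eta$, whose multiplicity is bounded by a universal constant $L=L(K)$. In Case 1 and Case 2-rectangle, $\sigma$ can be taken as the natural bijection induced by reflection across the shared face/rectangle. Applying Cauchy--Schwarz to the pairing gives
\[
\Bigl|\fint_{\mathcal I_n(w,w')}f - \tfrac{1}{|\mathcal J_n(w,w')|}\sum_{\eta}f(w'\cdot\sigma(\eta))\Bigr|^2 \leq \tfrac{1}{|\mathcal J_n(w,w')|}\sum_{\eta}\bigl(f(w\eta)-f(w'\sigma(\eta))\bigr)^2 \leq \tfrac{\mcD_{n,\{w,w'\}\cdot W_{n-m}}(f)}{|\mathcal J_n(w,w')|} \leq C k^{-(n-m)}\mcD_{n,\{w,w'\}\cdot W_{n-m}}(f),
\]
since each summand is the squared difference across a single $\sn$-edge and, because $\sigma$ is a function of $\eta$, each edge is used at most once.

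It remains to control the residual $\tfrac{1}{|\mathcal J_n(w,w')|}\sum_\eta f(w'\cdot\sigma(\eta)) - \fint_{\mathcal I_n(w',w)}f$, which is the gap between a $\sigma$-weighted average and the uniform one on $\mathcal I_n(w',w)$, and which vanishes whenever $\sigma$ is bijective. The main obstacle is the delicate line-segment sub-case of Case 2, where the ``unconstrained'' offsets make $\sigma$ only near-bijective. I would address it by combining $\sigma$ with a backward adjacency-respecting map $\tau:\mathcal J_n(w',w)\to\mathcal J_n(w,w')$ of the same type, and exploiting that both induced weight densities on $\mathcal I_n(w',w)$ lie between constant multiples of the uniform density; a symmetric Cauchy--Schwarz then shows the residual is likewise $O\bigl(\sqrt{k^{-(n-m)}\mcD_{n,\{w,w'\}\cdot W_{n-m}}(f)}\bigr)$, giving the first inequality of the lemma and hence, by the initial reduction, also the second.
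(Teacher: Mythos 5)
Your reduction of the second inequality to the first via $\lambda_n\gtrsim k^{2n}$ is precisely the paper's first step. For the first inequality, however, the paper takes a shorter route than yours: it simply asserts that there is a \emph{bijection} $\varphi:\mathcal I_n(w,w')\to\mathcal I_n(w',w)$ with $v\sn\varphi(v)$ for every $v\in\mathcal I_n(w,w')$; then a single Cauchy--Schwarz, together with $\#\mathcal I_n(w,w')\gtrsim k^{n-m}$, gives the bound with no residual term at all, since $\sum_{v}f(\varphi(v))=\sum_{u\in\mathcal I_n(w',w)}f(u)$. You are skeptical of this bijection, and that skepticism is not unreasonable: in the unconstrained setting the sub-grids on the two sides of the shared segment or rectangle are offset, so $\#\mathcal I_n(w,w')$ and $\#\mathcal I_n(w',w)$ need not coincide, and nothing in the paper justifies the existence of $\varphi$. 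You therefore retreat to a bounded-multiplicity map $\sigma$ and add a residual term comparing the $\sigma$-reweighted average of $f$ with the uniform average on $\mathcal I_n(w',w)$.

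That residual step is where your argument breaks down. In the line-segment sub-case $\mathcal I_n(w',w)$ is a chain of $\asymp k^{n-m}$ cells, and the gap between a uniform average and a reweighted average whose density is merely pinned between constant multiples of the uniform one is controlled only by the oscillation of $f$ along that chain; writing $\sum_u(a_u-b_u)f(u)=\sum_u(a_u-b_u)(f(u)-\bar f)$ and applying Cauchy--Schwarz to the two comparable probability densities $a$ and $b$ yields a bound of order $\sqrt{\mathcal D_{n,\{w,w'\}\cdot W_{n-m}}(f)}$ at best, which exceeds the required $\sqrt{k^{-(n-m)}\mathcal D_{n,\{w,w'\}\cdot W_{n-m}}(f)}$ by a factor $k^{(n-m)/2}$. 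Introducing the backward map $\tau$ does not help, because the backward Cauchy--Schwarz controls $\fint_{\mathcal I_n(w',w)}f$ against a $\tau$-reweighted average over the \emph{other} side, not against the $\sigma$-reweighted average you need. A repair that actually closes the gap should instead match cells by their position along the shared segment or rectangle, so that the $i$-th cell on one side and the $i$-th cell on the other are within a uniformly bounded graph distance, and then sum the telescoping differences over essentially disjoint short paths --- that recovers the conclusion without assuming $\sigma$ is bijective. As written, then, the overall strategy (pair, Cauchy--Schwarz, count) coincides with the paper's, but the extra step you insert to compensate for a possibly non-bijective pairing does not yet work.
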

\begin{proof}
	Since by \eqref{eqn33}, $\lambda_{n} \gtrsim k^{2n}$, the second inequality is an immediate consequence of the first one. So we only need to prove there is $C_1 > 0$ so that
	\[
	\big(\sum_{v\in \mathcal{I}_n(w,w')}f(v) - \sum_{v\in \mathcal{I}_n(w',w)}f(v)\big)^2 \leq C_1\big(\#\mathcal{I}_n(w,w')\big)^2k^{-n + m}\mathcal{D}_{n,\{w,w'\}\cdot W_{n - m}}(f).
	\]
	
	Clearly there is a bijection $\varphi: \mathcal{I}_n(w,w')\to \mathcal{I}_n(w',w)$ such that $v\sn \varphi(v)$ for any $v\in \mathcal{I}_n(w,w')$. Thus we deduce
	\[\begin{aligned}\mathcal{D}_{n,\{w,w'\}\cdot W_{n - m}}(f)
		\geq & \sum_{v\in \mathcal I_n(w,w')}\big(f(v) - f(\varphi(v))\big)^2 \\
		\geq & (\#\mathcal I_{n}(w,w'))^{-1}\big(\sum_{v\in \mathcal{I}_n(w,w')}f(v) - \sum_{v\in \mathcal{I}_n(w',w)}f(v)\big)^2.
	\end{aligned}\]
	Combining the above estimate with the fact that $\# \mathcal{I}_{n}(w,w') \gtrsim k^{n - m}$, the desired estimate follows.
\end{proof}

In particular, Lemma \ref{lemma44} tells us that when $n$ is large enough, and $m$ is bounded, the left sides of the inequalities  can be ignored compared with $\sqrt{N^{-n}\lambda_n\mcD_{n,\{w,w'\}\cdot W_{n-m}}(f)}$.\vspace{0.2cm}

Next, we apply Example \ref{example42} (a) and Lemma \ref{lemma44} to show the following Lemma \ref{lemma33}, which was mentioned at the end of Section \ref{sec3}.

\begin{lemma}\label{lemma33}
	Let $K$ be a $\USC^{(3)}$. Then there exists $C>0$ such that for any $n\geq 1$,
	\begin{align*}
		\sigma_n\leq C\cdot \lambda_n.
	\end{align*}
\end{lemma}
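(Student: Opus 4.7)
Fix $n'\geq 1$, a pair $w, w'\in W_{n'}$ with $w\stackrel{n'}{\sim}w'$, and a function $f\in l(\{w,w'\}\cdot W_n)$; write $\mcD:=\mcD_{n'+n,\{w,w'\}\cdot W_n}(f)$. The plan is to establish
\[
\bigl(\fint_{w\cdot W_n}f\,d\mu_{n'+n} - \fint_{w'\cdot W_n}f\,d\mu_{n'+n}\bigr)^2 \,\leq\, C\,N^{-n}\lambda_n\,\mcD,
\]
with $C$ depending only on $K$; multiplying through by $N^n$ and taking the supremum over $(n',w,w')$ then yields $\sigma_n\leq C\lambda_n$.

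Let $\mathcal{I}:=\mathcal{I}_{n'+n}(w,w')$ and $\mathcal{I}':=\mathcal{I}_{n'+n}(w',w)$ denote the two contact layers along the shared boundary of the blocks, and telescope the difference of averages as
\[
\bigl[\fint_{w\cdot W_n} - \fint_{\mathcal{I}}\bigr] + \bigl[\fint_{\mathcal{I}} - \fint_{\mathcal{I}'}\bigr] + \bigl[\fint_{\mathcal{I}'} - \fint_{w'\cdot W_n}\bigr].
\]
For the middle bracket I would invoke the first inequality of Lemma \ref{lemma44} (with its ``$m$'' equal to our $n'$ and its ``$n$'' equal to our $n'+n$), yielding $(\fint_{\mathcal{I}}f-\fint_{\mathcal{I}'}f)^2\leq C\,k^{-n}\,\mcD$. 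For the two outer brackets I would apply Proposition \ref{prop41} locally inside each block: under the natural identification $v\mapsto w\cdot v$ of the cell graph on $w\cdot W_n$ with $(W_n,E_n)$, the restriction of $f$ pulls back to a function on $W_n$ whose level-$n$ graph energy equals $\mcD_{n'+n,w\cdot W_n}(f)$, and the counting measure on $\mathcal{I}$ pulls back to the counting measure on $\mathcal{J}_{n'+n}(w,w')\subset W_n$, which verifies hypothesis (\ref{eqn41}) with a constant depending only on $K$ (this is precisely Example \ref{example42}(a)). Proposition \ref{prop41} thus gives $(\fint_{w\cdot W_n}f-\fint_{\mathcal{I}}f)^2\leq C\,N^{-n}\lambda_n\,\mcD$, and symmetrically for the third bracket.

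Combining the three pieces via $(a+b+c)^2\leq 3(a^2+b^2+c^2)$ produces a total upper bound $C(N^{-n}\lambda_n+k^{-n})\mcD$. The $k^{-n}$ term is then absorbed by observing that (\ref{eqn33}) gives $\lambda_n\gtrsim k^{2n}$ while the standing requirement $N\leq k^3-1$ gives $N^{-n}\geq k^{-3n}$, so $k^{-n}\leq N^{-n}k^{2n}\lesssim N^{-n}\lambda_n$; this completes the argument.

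The one mildly delicate point is the local application of Proposition \ref{prop41}, which requires verifying that under the identification $v\mapsto w\cdot v$, the level-$(n'+n)$ edges of the cell graph inside $w\cdot W_n$ correspond precisely to the level-$n$ edges of $(W_n,E_n)$, so that the pull-back energy is the one to which the constant $\lambda_n$ applies. This is a direct case check against Definition \ref{def210}: when $w\in(\partial W_1)^{n'}$, Case~1 inside the block matches Case~1 at level $n$; when $w\notin(\partial W_1)^{n'}$, only Case~2 is available inside the block, but face-opposite sub-cells still meet its area threshold since their intersection is a full face of area $k^{-2n}$. The same check confirms that the pulled-back contact-layer density $|\mathcal{J}_{n'+n}(w,w')\cap v\cdot W_{n-i}|/|\mathcal{J}_{n'+n}(w,w')|$ decays like $k^{-i}$ with a $K$-dependent constant, as asserted in Example \ref{example42}(a).
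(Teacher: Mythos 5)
Your proposal is correct and matches the paper's proof essentially line for line: the same telescoping through $\mathcal{I}_{n'+n}(w,w')$ and $\mathcal{I}_{n'+n}(w',w)$, the same appeal to Lemma \ref{lemma44} for the middle term and to Proposition \ref{prop41} via Example \ref{example42}(a) for the outer terms, and the same absorption of $k^{-n}$ into $N^{-n}\lambda_n$ via \eqref{eqn33}. Your extra discussion of why the block $w\cdot W_n$ with the induced edge relation is graph-isomorphic to $(W_n,E_n)$ (needed to invoke $\lambda_n$ locally) is a correct elaboration of a step the paper treats as implicit.
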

\begin{proof}
Let $w\stackrel{n'}{\sim} w'$ and $f\in l(\{w,w'\}\cdot W_n)$. We see that
\[\begin{aligned}
&\big|\fint_{w\cdot W_n}fd\mu_{n'+n}-\fint_{w'\cdot W_n}fd\mu_{n'+n}\big|\\
\leq &\big|\fint_{\mathcal I_{n'+n}(w,w')}fd\mu_{n'+n}-\fint_{\mathcal I_{n'+n}(w',w)}fd\mu_{n'+n}\big|\\ &+\big|\fint_{w\cdot W_n}fd\mu_{n'+n}-\fint_{\mathcal I_{n'+n}(w,w')}fd\mu_{n'+n}\big|+\big|\fint_{w'\cdot W_n}fd\mu_{n'+n}-\fint_{\mathcal I_{n'+n}(w',w)}fd\mu_{n'+n}\big|\\
\leq & C_1\cdot \big(\sqrt{N^{-n}\lambda_n}+\sqrt{ k^{-n}}\big)\cdot \sqrt{\mcD_{n'+n,\{w,w'\}\cdot W_n}(f)}\\
\leq & C_2\cdot \sqrt{N^{-n}\lambda_n\mcD_{n'+n,\{w,w'\}\cdot W_n}(f)},
\end{aligned}
\]
for some constants $C_1, C_2>0$ depending only on $K$, where in the second inequality we use Proposition \ref{prop41}, Example \ref{example42} (a) and Lemma \ref{lemma44}, and in the third inequality we use the fact that $ k^{-n}\leq N^{-n} k^{2n}\lesssim N^{-n}\lambda_n$ by (\ref{eqn33}).
The lemma follows immediately.
\end{proof}

\begin{proposition}\label{prop45}
	$\mathscr{R}_n\geq C\cdot N^{-n}\lambda_n$ for any $n\geq 1$, where $C>0$ is a constant depending only on the $\USC^{(3)}$ $K$.
\end{proposition}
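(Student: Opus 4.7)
The plan is to test $\mathscr{R}_n$ against the $\lambda_n$-eigenfunction and to transfer its level-$1$-block variance into a face-to-face difference of averages. Let $f \in l(W_n)$ with $\mcD_n(f)=1$, $\bar f=0$, and $\sum_w f(w)^2 = \lambda_n$. Define the block averages $a_i := \fint_{i\cdot W_{n-1}} f\,d\mu_n$ for $i \in W_1$. The Poincar\'e inequality applied on each sub-block $i \cdot W_{n-1}$ gives $\sum_{i,v}(f(iv)-a_i)^2 \leq \lambda_{n-1}\mcD_n(f) = \lambda_{n-1}$, so
\[
N^{n-1}\sum_{i\in W_1} a_i^2 \geq \lambda_n - \lambda_{n-1}.
\]
Chaining the two inequalities of Proposition~\ref{prop32} yields $\lambda_n \geq k^2 \lambda_{n-1} \geq 9\lambda_{n-1}$ (as $k\geq 3$), hence $\sum_i a_i^2 \gtrsim N^{-(n-1)}\lambda_n$. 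Similarly, $(a_i-a_j)^2 \leq N^{-(n-1)}\sigma_{n-1}\,\mcD_{n,\{i,j\}\cdot W_{n-1}}(f)$ (from the definition of $\sigma_{n-1}$) together with Lemma~\ref{lemma33}'s $\sigma_{n-1}\asymp \lambda_{n-1}$, summed over $i\sn j$ in $W_1$, yields $\mcD_1(a) \lesssim N^{-(n-1)}\lambda_{n-1}$. Thus $a$ nearly saturates the Poincar\'e ratio $\lambda_1$ on the finite graph $W_1$.

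Next I would relate level-$1$ face averages of $a$ to level-$n$ face averages of $f$. Set $A_{o,s} := \fint_{\tilde F_{1,o,s}} a\,d\mu_1$. Since $\tilde F_{n,o,s} = \bigsqcup_{i\in \tilde F_{1,o,s}} i\cdot \tilde F_{n-1,o,s(i)}$ for a suitable $s(i)\in\{0,1\}$, the counting measure on $\tilde F_{n,o,s}$, viewed as a measure on $\tilde F_{1,o,s}\cdot W_{n-1}$, is balanced across the blocks and satisfies (\ref{eqn41}) within each block because $N_1 := \#\tilde F_{1,o,s} \geq k^2$. Corollary~\ref{coro44} then yields
\[
|\bar f_{o,s}-A_{o,s}|^2 \;\lesssim\; \frac{N}{N_1\, k^2}\cdot N^{-n}\lambda_n \;\lesssim\; \frac{1}{k}\cdot N^{-n}\lambda_n,
\]
which is of lower order since $k\geq 3$. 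Hence, if some pair of faces $(o,s)\neq (o',s')$ admits $|A_{o,s}-A_{o',s'}|^2 \gtrsim N^{-n}\lambda_n$, the triangle inequality gives the same lower bound for $|\bar f_{o,s}-\bar f_{o',s'}|^2$; Lemma~\ref{lemma40}, combined with an element of $\msG$ sending that pair to $(F_{1,0},F_{2,0})$, then concludes $\mathscr{R}_n \gtrsim N^{-n}\lambda_n$.

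The decisive step is producing such two faces, and it is what I expect to be the main obstacle. A priori this could fail: if $a$ were to concentrate on cells of $W_1$ touching no face of $\square$, every $A_{o,s}$ would vanish. The rescue is the rigidity from Step~1: both $\sum_i a_i^2$ and $\mcD_1(a)$ are pinned to order $N^{-(n-1)}\lambda_{n-1}$, so after normalisation $a$ is an approximate top eigenvector of the $\lambda_1$-Rayleigh quotient in the finite-dimensional space $\{g\in l(W_1): \bar g=0\}$. A compactness argument then reduces the claim to showing that no exact $\lambda_1$-eigenfunction has all six face averages equal, which I would verify using the $\msG$-equivariance of the face-averaging map $l(W_1)\to\mathbb R^6$ together with the explicit gradient-like functions $g_{(o)}(i) := P_o(\text{centre of }\Psi_i\square) - 1/2$, $o = 1,2,3$. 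These satisfy $\sum_i g_{(o)}(i)^2 \asymp N$, $\mcD_1(g_{(o)}) \lesssim N/k^2$ (so $\lambda_1 \asymp k^2$), and $|A_{o,0}^{g_{(o)}}-A_{o,1}^{g_{(o)}}|^2 \asymp 1$; the fact that the standard $3$-dimensional sub-representation of $\msG$ they span is not contained in the kernel of the face-averaging map is then the combinatorial heart of the estimate, after which one chooses $\Gamma\in\msG$ to align the projection of $a$ onto this sub-representation with a suitable $g_{(o)}$ to produce the required face pair.
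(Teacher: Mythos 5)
Your Step 1 is sound: with the normalizations $\mcD_n(f)=1$, $\bar f=0$, $\sum_w f(w)^2=\lambda_n$, the orthogonal decomposition $\lambda_n=\sum_w\big(f(w)-a_{i(w)}\big)^2+N^{n-1}\sum_i a_i^2$ combined with the local Poincar\'e inequality and $\lambda_n\geq k^2\lambda_{n-1}$ (from Proposition~\ref{prop32}) gives $\sum_i a_i^2\gtrsim N^{-(n-1)}\lambda_n$, and the definition of $\sigma_{n-1}$ plus Lemma~\ref{lemma33} gives $\mcD_1(a)\lesssim N^{-(n-1)}\lambda_{n-1}$. The Corollary~\ref{coro44} perturbation also goes through, though the gain is only a factor $k^{-1}$, so the implicit constants need care.

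The gap is exactly where you flag it, and the proposed rescue does not close it. From Step~1 you obtain that $a$ has Rayleigh ratio $\sum_i a_i^2/\mcD_1(a)\gtrsim k^2$ (with an implicit constant depending only on $K$). But $\lambda_1$ --- the supremum of that Rayleigh quotient over mean-zero functions on $W_1$ --- is a fixed constant that can exceed $c\,k^2$ by an arbitrary factor: the eventual conclusion of the paper is $\lambda_n\asymp k^{nd_W}$ with $d_W\geq 2$, and in the transient regime of this paper $d_W>2$ is the generic case, so $\lambda_1\asymp k^{d_W}$ is genuinely larger than $k^2$. A vector whose Rayleigh ratio is only $\gtrsim k^2\ll\lambda_1$ is \emph{not} an approximate top eigenvector, and the compactness argument provides no structural information about $a$. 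This is in fact the central difficulty of the whole paper: $\lambda_n$ and $k^{2n}$ are not comparable, and an argument that implicitly conflates them cannot work. In addition, even if the approximate-eigenfunction step were available, the claim that no exact $\lambda_1$-extremizer has all six face averages equal is left unverified. The functions $g_{(o)}$ are test vectors of Rayleigh ratio $\asymp k^2$, not eigenfunctions; nothing prevents the actual top eigenspace from being $\msG$-invariant, orthogonal to all $g_{(o)}$, and supported entirely on interior cells of $W_1$, in which case every $A_{o,s}$ would vanish. The representation-theoretic observation that the $g_{(o)}$'s span a standard representation with nonzero face averages does not transfer to the eigenspace.

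The paper's proof of Proposition~\ref{prop45} avoids this trap. It does not start from a $\lambda_n$-extremizer on $W_n$; it starts from a maximizer $(\eta,\eta')$ of the wall-crossing constant $\sigma_n$, anti-symmetrizes it with respect to the central symmetry interchanging $\Psi_\eta K$ and $\Psi_{\eta'}K$, and truncates. The resulting function $f$ has average $\geq 1/2$ on $W_n$ but, crucially, has a small average on the wall $\mathcal{J}_{n'+n}(\eta,\eta')$ --- this is forced by the anti-symmetry together with Lemma~\ref{lemma44}. That structural input guarantees a nontrivial gap between the average on some well-placed subset and the average near the wall, which a chain argument at two auxiliary scales $M_1,M_2$ (using Corollary~\ref{coro44} and paths of bounded length) converts into a difference of face averages of a fixed subcell at scale $n-M_1-M_2$, from which $\mathscr{R}_{n-M_1-M_2}\gtrsim N^{-n}\sigma_n$ and hence $\mathscr{R}_n\gtrsim N^{-n}\lambda_n$ follow. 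The small-boundary-average property of the anti-symmetrized $\sigma_n$-extremizer is exactly the replacement for the eigenfunction rigidity your argument lacks.
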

\begin{proof}
	We assume $n$ is large enough, and will show that $\mathscr{R}_{n-M_1-M_2}\gtrsim N^{-n}\sigma_n$ for some $M_1,M_2\in\mathbb{N}$ independent of $n$. The proposition will then follow from (\ref{eqn33}) immediately. 
	
	We provide the proof through $4$ steps.\vspace{0.2cm}
	
	\noindent\textit{Step 1}. We pick $n'\in \mathbb{N}$ and $\eta,\eta'\in W_{n'}$ so that $\eta\stackrel{n'}{\sim}\eta'$ and $\sigma_n(\eta,\eta')=\sigma_n$. By the definition of $\sigma_n(\eta,\eta')$, we could choose  $f'\in l(\{\eta,\eta'\}\cdot W_n)$ such that
	\[
	\fint_{\eta\cdot W_n}f'd\mu_{n'+n}=1,\  \fint_{\eta'\cdot W_n}f'd\mu_{n'+n}=-1\text{  and }\mcD_{n+n',\{\eta,\eta'\}\cdot W_n}(f')=4N^n\sigma^{-1}_n.
	\]
	Furthermore,  let $\Gamma_{\eta,\eta'}$ be the self-isometry that acts as central symmetry on $\Psi_\eta K\cup\Psi_{\eta'} K$, i.e. $\Gamma_{\eta,\eta'}(\Psi_\eta K)=\Psi_{\eta'}K$ and $\Gamma_{\eta,\eta'}(\Psi_{\eta'} K)=\Psi_{\eta}K$, and let $\tilde\Gamma_{\eta,\eta'}$ be the induced symmetry on $\{\eta,\eta'\}\cdot W_n$ so that $\Psi_{\tilde\Gamma_{\eta,\eta'}(v)}K=\Gamma_{\eta,\eta'}(\Psi_vK)$. Then we can assume that $f'$ is anti-symmetric with respect to $\tilde\Gamma_{\eta,\eta'}$ (i.e. $f(\tilde \Gamma_{\eta,\eta'}(v))=-f(v)$) noticing that $\tilde f':=\frac{f'-f'\circ\tilde\Gamma_{\eta,\eta'}}{2}$ will decrease the energy of $f'$ but still keep $\fint_{\eta\cdot W_n}\tilde f'd\mu_{n'+n}=1,\  \fint_{\eta'\cdot W_n}\tilde f'd\mu_{n'+n}=-1$. We claim that there is $C_1\geq 1/2$ depending only on $K$ such that $f''=(f'\wedge C_1)\vee (-C_1)$ satisfying
	\[
	\fint_{\eta\cdot W_n}f''d\mu_{n'+n}\geq \frac12,\quad  \fint_{\eta'\cdot W_n}f''d\mu_{n'+n}\leq -\frac12.
	\]
	We then let $f\in l(W_n)$ be defined as
	\[f(w)=f''(\eta\cdot w),\qquad\forall w\in W_n.\]

	By the anti-symmetry of $f'$, the only thing that we need to prove in this step is the existence of $C_1$ such that $\fint_{W_n}(g\wedge C_1)\vee (-C_1)d\mu_{n}\geq 1/2$, where $g\in l(W_n)$ is defined as $g(w)=f'(\eta\cdot w),\forall w\in W_n$. In fact, one can see
	\[
	\begin{cases}
		\fint_{W_n}gd\mu_n=1,\\
		\fint_{W_n}(g-1)^2d\mu_n\leq N^{-n}\lambda_n\mcD_{n}(g)\leq C_2,
	\end{cases}
	\]
	for some constant  $C_2>0$ depending only on $K$, where we use the fact that $\mcD_n(g)\leq 2N^n\sigma^{-1}_n$ and $\lambda_n\lesssim\sigma_n$ by \eqref{eqn33} in the last inequality. Whence, there is $C_1$ depending only on $C_2$ such that $\fint_{W_n}|g|\cdot 1_{\{|g|>C_1\}}d\mu_n\leq\frac{1}{2}$, so $f=(g\wedge C_1)\vee (-C_1)$ satisfies the desired property. \vspace{0.2cm}

	For  convenience, we list the properties of $f$ that we will use:
	
	1). $\mcD_n(f)\leq 2N^{n}\sigma_n^{-1}$;
	
	2). $\fint_{W_n}fd\mu_n\geq 1/2$;
	
        3). $\fint_{\mathcal{J}_{n'+n}(\eta,\eta')} fd\mu_n\leq C_3\cdot (Nk^{-3})^{n/2}$ for some $C_3>0$ depending only on $K$.\vspace{0.2cm}
	
	Here (1), (2) are obvious, and  (3) is a consequence of the anti-symmetry of $f''$, the energy estimate of $f''$, and Lemma \ref{lemma44}. Indeed, one can check that
	\[
	\fint_{\mathcal{J}_{n'+n}(\eta,\eta')} fd\mu_n\leq \sqrt{C_4k^{-n}\mcD_{n'+n,\{\eta,\eta'\}\cdot W_n}(f'')}\leq \sqrt{C_4k^{-n}\cdot 4N^n\sigma_n^{-1}},
	\]
	for some constants $C_4>0$ depending only on $K$, and then (3) follows from \eqref{eqn33}.
 As pointed out after Lemma \ref{lemma44}, the right side of (3) is rather small as long as $n$ is large enough. \vspace{0.2cm}

	\noindent\textit{Step 2.  There exists $M_1\in\mathbb{N}$  (large but independent of large $n$), so that the following hold:}
	
	\textit{(2.a). There are $\alpha,\alpha'\in W_{M_1}$ such that $\alpha\stackrel{M_1}{\sim}\alpha'$ and} \[\fint_{\mathcal{I}_n(\alpha,\alpha')}fd\mu_n>\frac{1}{4}.\]
	
	\textit{(2.b). There is $\beta\in W_{M_1}$, $o\in \{1,2,3\}$ and $s\in \{0,1\}$ such that }
	\[
	\beta\cdot \tilde F_{n-M_1,o,s}\cap \mathcal{J}_{n'+n}(\eta,\eta')\neq \emptyset,\qquad \fint_{\beta\cdot \tilde{F}_{n-M_1,o,s}} fd\mu_n<\frac 18.
	\]
	
	(2.a) can be shown by contradiction. We assume that for any large $M_{1,0}$, such a pair doesn't exist, then for each $\tilde{\alpha}\in W_{M_{1,0}}$, there is $\tilde{\alpha}'\stackrel{M_{1,0}}{\sim} \tilde{\alpha}$ such that
	\[
		\fint_{\mathcal{I}_n(\tilde{\alpha},\tilde{\alpha}')}fd\mu_n\leq \frac{1}{4}.
	\]
	Define a measure $\nu$ on $W_n$ by $\nu(B)=\sum_{\tilde{\alpha}\in W_{M_{1,0}}}\frac{\mu_n(B\cap \mathcal{I}_n(\tilde{\alpha},\tilde{\alpha}'))}{\mu_n(\mathcal{I}_n(\tilde{\alpha},\tilde{\alpha}'))}$, then clearly $\fint_{W_n}fd\nu\leq \frac14$. In addition, by Corollary \ref{coro44} (see also Example \ref{example42}), we have that for some constant $C_5>0$,
		\[
		\big|\fint_{W_n}fd\mu_n-\fint_{W_n}fd\nu\big|\leq \sqrt{C_5k^{-2M_{1,0}}}\cdot \sqrt{N^{-n}\lambda_n\mcD_{n}(f)}.
		\]
		Noticing that due to Step 1-(1) and (\ref{eqn33}), $\mcD_{n}(f)\leq C_6N^n\lambda_n^{-1}$ for some $C_6>0$, by $\fint_{W_n}fd\nu\leq \frac14$, when $M_{1,0}$ is large enough, we must have $\fint_{W_n}f\mu_n<\frac12$, a contradiction to Step 1-(2).\vspace{0.2cm}
	
	(2.b) also follows from the properties of $f$. We need to consider two possible cases: $K_\eta\cap K_{\eta'}$ is a line segment or a rectangle.\vspace{0.2cm}

	If $K_\eta\cap K_{\eta'}$ is a rectangle, for large $M_{1,1}$, we let
	\[
	A_{M_{1,1}}=\bigcup\big\{\beta\cdot \tilde F_{n-M_{1,1},o,s}: \beta\in W_{M_{1,1}}, o\in \{1,2,3\},s\in \{0,1\}\text{ and }\beta\cdot \tilde F_{n-M_{1,1},o,s}\subset \mathcal{J}_{n+n'}(\eta,\eta')\big\}.
	\]
	Then, for $M_{1,1}$ (and $n$) large enough, $\fint_{A_{M_{1,1}}}fd\mu_n<\frac{1}{8}$, where we use the fact that $f$ is bounded below by $-C_1$, $\fint_{\mathcal{J}_{n+n'}(\eta,\eta')}fd\mu_n$ can be arbitrarily small by Step 1-(3), and the fact that $\frac{\mu_n(\mathcal{J}_{n'+n}(\eta,\eta')\setminus A_{M_{1,1}})}{\mu_n(\mathcal{J}_{n'+n}(\eta,\eta'))}\to 0$ as $M_{1,1}\to \infty$ with convergence rate depending only on $K$. Then (2.b) follows immediately.\vspace{0.2cm}
	
	 If $K_\eta\cap K_{\eta'}$ is a line segment,  for large $M_{1,1}$, we denote $\{\tilde E_{n-M_{1,1},i}\}_{i=1}^{12}$ for all  the possible intersections $\tilde F_{n-M_{1,1},o,s}\cap \tilde F_{n-M_{1,1},o',s'}$,
	$o\neq o'\in\{1,2,3\}$, $s,s'\in\{0,1\}$,  and let
	\[
	A_{M_{1,1}}=\bigcup\big\{\beta\cdot \tilde E_{n-M_{1,1},i}: \beta\in W_{M_{1,1}}, i\in \{1,\cdots,12\}\text{ and }\beta\cdot \tilde E_{n-M_{1,1},i}\subset \mathcal{J}_{n+n'}(\eta,\eta')\big\}.
	\]
	Then same as the previous case, for large enough $M_{1,1}$, we have $\fint_{A_{M_{1,1}}}fd\mu_n < \frac{1}{16}$. Then for large $M_{1,2} > M_{1,1}$, we denote
	\[A'_{M_{1,2}} = \bigcup\big\{\beta\cdot \tilde{F}_{n - M_{1,2},o,s}:\beta \in W_{M_{1,2}},o\in \{1,2,3\},s\in \{0,1\},\beta\cdot\tilde{F}_{n - M_{1,2},o,s}\cap A_{M_{1,1}}\neq \emptyset\big\}.
	\]
	Define measures $\nu_1,\nu_2$ on $W_n$ by $\nu_1(B) = \mu_n(B\cap A_{M_{1,1}})$ and $\nu_2(B) = \mu_n(B\cap A'_{M_{1,2}})$ for $B\subset W_n$. Then by Corollary \ref{coro44} and using (\ref{eqn33}), we have
	\[\big|\fint_{W_n}fd\nu_1 - \fint_{W_n}fd\nu_2\big| \leq  C_7(N^{M_{1,2}}k^{-3M_{1,2}})^{1/2}
	\]	
	for some constant $C_7>0$. Noticing that $\fint_{W_n}fd\nu_1 = \fint_{A_{M_{1,1}}}fd\mu_n < 1/16$ and $\fint_{W_n} fd\nu_2 = \fint_{A'_{M_{1,2}}}fd\mu_n$, {(2.b)} follows immediately by taking $M_{1,2}$ large enough.

	Finally, we simply choose $M_1$ large enough so that both (2.a) and (2.b) hold.\vspace{0.2cm}

	\noindent\textit{Step 3. There exist $\gamma\in W_{M_1}$ and subsets $A,B\subset \gamma\cdot W_{n-M_1}$ that both take the form $\gamma\cdot \tilde{F}_{n-M_1,o,s}$ with $o\in\{1,2,3\}$, $s\in\{0,1\}$, or $\mathcal{I}_n(\gamma,\gamma')$ with $\gamma'\stackrel{M_1}{\sim}\gamma$, satisfying
	\[
	\fint_Afd\mu_n-\fint_Bfd\mu_n>C_8,
	\]
	for some $C_8>0$ depending only on $K$. }\vspace{0.2cm}
	
	By condition (\textbf{A2}), we can find $L\leq N^{M_{1,1}}$ and a path $\alpha=\gamma^{(0)},\gamma^{(1)},\cdots,\gamma^{(L)}=\beta$, where $\alpha,\beta\in W_{M_1}$ are defined in Step 2. Then, we let
	\[
	\begin{cases}
		A_l=\mathcal{I}_{n}(\gamma^{(l)},\gamma^{(l-1)}),&\text{ if }0<l\leq L,\\
		B_l=\mathcal{I}_{n}(\gamma^{(l)},\gamma^{(l+1)}),&\text{ if }0\leq l<L,
	\end{cases}
	\]
	and $A_0=\mathcal{I}_n(\alpha,\alpha')$ as in (2.a), and $B_L=\beta\cdot \tilde F_{n-M_1,o,s}$ as in (2.b). By Step 2, we know that $\fint_{A_0}fd\mu_n-\fint_{B_L}fd\mu_n>\frac{1}{8}$; by Lemma \ref{lemma44}, we know that each $|\fint_{A_l}fd\mu_n-\fint_{B_{l-1}}fd\mu_n|$ can be ignored as long as $n$ is large. Hence, we can find $0\leq l\leq L$ such that $\gamma^{(l)}, A_l,B_l$ satisfy the requirement of the claim.\vspace{0.2cm}

	\noindent\textit{Step 4. There exist large enough $M_2\in\mathbb{N}$, small enough $C_{9}>0$ depending only on $K$, $\xi\in \gamma\cdot W_{M_2}$, $o_1,o_2\in \{1,2,3\}$ and $s_1,s_2\in \{0,1\}$ such that
	\[
	\big|\fint_{\xi\cdot \tilde{F}_{n-M_1-M_2,o_1,s_1}}fd\mu_n-\fint_{\xi\cdot \tilde{F}_{n-M_1-M_2,o_2,s_2}}fd\mu_n\big|>C_{9}.
	\]}
	
	By a same argument as (2.b) and using Corollary \ref{coro44} again, we can find $M_2$ large enough and $w,w'\in \gamma\cdot W_{M_2}$ such that there are $o,o'\in\{1,2,3\},s,s'\in\{0,1\}$ satisfying
	\[
	w\cdot \tilde{F}_{n-M_1-M_2,o,s}\cap A\neq \emptyset,\quad w'\cdot \tilde{F}_{n-M_1-M_2,o',s'}\cap B\neq \emptyset,
	\]
	where $A,B$ are the same in Step 3, and
	\[
	\big|\fint_{w\cdot \tilde{F}_{n-M_1-M_2,o,s}}fd\mu_n-\fint_{w'\cdot \tilde{F}_{n-M_1-M_2,o',s'}}fd\mu_n\big|>C_8/2.
	\]
	Then, Step 4 follows from a same argument as Step 3, by arranging a path connecting $w,w'$ along the faces of $\gamma\cdot W_{M_2}$.\vspace{0.2cm}
	
	Finally, we let $h\in l(W_{n-M_1-M_2})$ be defined as $h(w)=f(\xi\cdot w),\forall w\in W_{n-M_1-M_2}$. Then by Step 4 and Lemma \ref{lemma40}, we immediately have $$\mathscr R_{n-M_1-M_2}\geq \frac 14 C_{9}^2/\mcD_{n-M_1-M_2}(h)\geq  \frac 18 C_{9}^2 N^{-n}\sigma_n,$$
	and thus the proposition follows.
\end{proof}

\section{A pair of random walks}\label{sec5}
In this section, we introduce a pair of random walks that are properly coupled. A hitting time estimate will be given in Sections \ref{sec6} and \ref{sec7}. \vspace{0.2cm}

First, we introduce some notations about the underlying spaces. For $m\geq 0, n\geq 1$, we let
\[\tilde{L}_{m,n}:=\tilde{F}_{m,2,0}\cdot W_n.\]
Intuitively,  $\tilde{L}_{m,n}$ is like a thick wall, consisting of copies of $W_n$, perpendicular to $\tilde{F}_{m+n}:=\tilde{F}_{m+n,1,0}$. In particular, we can fold $\tilde{L}_{m,n}$ into $W_n$ in a natural way.

\begin{definition}[\textbf{a folding map}]\label{def51}
	Define $\Theta:\mathbb{R}\to [0,1]$ by $\Theta(t)=\min\big\{|t-2i|:i\in \mathbb{Z}\big\}$. Let $\Theta^{(3)}:\mathbb{R}^3\to [0,1]^3$ be defined as $\Theta^{(3)}(x_1,x_2,x_3)=\big(\Theta(x_1),\Theta(x_2),\Theta(x_3)\big), \forall x=(x_1,x_2,x_3)$. We then define a folding map $\tilde{\Theta}_{m,n}:\tilde{L}_{m,n}\to W_n$ as follows. For each $w'\cdot w\in \tilde{L}_{m,n}$ with $w'\in \tilde{F}_{m,2,0}$, $w\in W_n$, we let $\tilde{\Theta}_{m,n}(w'\cdot w)$ be the unique element in $W_n$ such that
	\[\Theta^{(3)}\big(k^m\Psi_{w'\cdot w}K\big)=\Psi_{\tilde{\Theta}_{m,n}(w'\cdot w)}K,\]
	where $k^m\Psi_{w'\cdot w}K=\{k^mx: x\in \Psi_{w'\cdot w}K\}$.
	
	In addition, for  a measure $\nu$ on $\tilde{L}_{m,n}$, we let $\tilde{\Theta}_{m,n}^*\nu$ be the image measure on $W_n$ of $\nu$ under $\tilde \Theta_{m,n}$, i.e. $\tilde{\Theta}_{m,n}^*\nu(B)=\nu\big(\tilde{\Theta}_{m,n}^{-1}(B)\big)$, $\forall B\subset W_n$.
\end{definition}

We assign suitable measures on $W_n$ and $\tilde{L}_{m,n}$ as follows.

\noindent(\textbf{\emph{A measure $\pi_n$ on $W_n$}}). We let
\[\deg(w)=\#\{v\in W_n:v\neq w,v\sn w\},\qquad\forall n\geq 1,w\in W_n.\]
For $n\geq 1$, define a measure $\pi_n$ on $W_n$ by
\[\pi_n(w):=\pi_n(\{w\})=
\begin{cases}
	\deg(w)+1, &\text{ if }w\in \partial W_n,\\
	\deg(w),   &\text{ if }w\in W_n\setminus \partial W_n.
\end{cases}
\]

\noindent(\textbf{\emph{A measure $\pi_{m,n}$ on $\tilde{L}_{m,n}$}}). For $m\geq 0,n\geq 1$, define a measure $\pi_{m,n}$ on $\tilde{L}_{m,n}$ by
\[
\pi_{m,n}(w'\cdot w):=\pi_{m,n}(\{w'\cdot w\})=\pi_n\big(\tilde\Theta_{m,n}(w'\cdot w)\big),\qquad\forall w'\in\tilde{F}_{m,2,0},\ w\in W_n.
\]\vspace{0.2cm}

Now, we introduce a random walk on $\tilde{L}_{m,n}$.

\noindent(\textbf{\emph{A random walk on $\tilde L_{m,n}$}}). For each pair $w'\cdot w, v'\cdot v$ with $w',v'\in \tilde{F}_{m,2,0}$ and $w,v\in W_n$, we define
\begin{equation}\label{eqn51}
\begin{aligned}
	&p^{(m,n)}_1(w'\cdot w,v'\cdot v)\\
	=&\begin{cases}
		(\deg(w'\cdot w)-\deg(w)+1)^{-1}\cdot\pi_n(w)^{-1},&\text{ if }w'\cdot w= v'\cdot v\text{ and } w=v\in\partial W_n,\\
		(\deg(w'\cdot w)-\deg(w)+1)^{-1}\cdot\pi_n(w)^{-1},&\text{ if }w'\cdot w\stackrel{m+n}{\sim}v'\cdot v\text{ and }w'\neq v',\\
		\pi_n(w)^{-1},&\text{ if }w'\cdot w\stackrel{m+n}{\sim}v'\cdot v\text{ and }w'=v',\\
		0,&\text{ otherwise}.
	\end{cases}
\end{aligned}
\end{equation}
Then $p^{(m,n)}_1(w'\cdot w,v'\cdot v)$ is a Markov kernel on $\tilde L_{m,n}$. We also take $p^{(m,n)}_0(w'\cdot w,v'\cdot v)=\delta_{w'\cdot w,v'\cdot v}$, where $\delta_{a,b}$ is the Kronecker delta, i.e. $\delta_{a,b}=1$ if $a=b$; $\delta_{a,b}=0$ if $a\neq b$.

\begin{remark} For the first two cases in (\ref{eqn51}), the term $\deg(w'\cdot w)-\deg(w)$ is the number of neighbors of $w'\cdot w$ in $\tilde L_{m,n}\setminus w'\cdot W_n$, which only takes values in $\{0,1,2\}$ by recalling Remark \ref{remark210}.
Sometimes, we also regard $p^{(m,n)}_1$ as an operator $p^{(m,n)}_1: l(\tilde{L}_{m,n})\to l(\tilde{L}_{m,n})$ by
\[
p_1^{(m,n)}f(w'\cdot w):=\sum_{v'\cdot v\in \tilde{L}_{m,n}}p_1^{(m,n)}(w'\cdot w,v'\cdot v)f(v'\cdot v),\quad \forall f\in l(\tilde L_{m,n}),  \forall w'\cdot w\in \tilde{L}_{m,n}.
\]
\end{remark}

Let $\big(\Omega,\mathbb{P}^{(m,n)}_{w'\cdot w}, X^{(m,n)}\big)$ be the associated \textit{random walk on $\tilde L_{m,n}$}, where $X^{(m,n)}$ is short for $\{X^{(m,n)}_i\}_{i\geq 0}$. Here $\Omega$ is the sample space, $\mathbb{P}^{(m,n)}_{w'\cdot w}$ is a probability measure for each $w'\cdot w\in \tilde{L}_{m,n}$ (the law of the random walk starting at $w'\cdot w$) and for $i\geq 0$, $X^{(m,n)}_i: \Omega\to \tilde L_{m,n}$ are random variables. Then for any $w'\cdot w,v'\cdot v\in \tilde{L}_{m,n}$ and $i\geq 0$, we have
\[
\begin{aligned}
	\mbPmn_{w'\cdot w}\big(X^{(m,n)}_i=v'\cdot v\big)&=p_i^{(m,n)}(w'\cdot w,v'\cdot v),
\end{aligned}
\]
where for $i\geq 2$, $p_i^{(m,n)}(w'\cdot w,v'\cdot v)$ is defined as
\[
p_i^{(m,n)}(w'\cdot w,v'\cdot v):=\sum_{\eta^{(1)},\eta^{(2)},\cdots,\eta^{(i-1)}\in \tilde{L}_{m,n}}p^{(m,n)}_1(w'\cdot w,\eta^{(1)})\cdots p^{(m,n)}_1(\eta^{(i-1)},v'\cdot v).
\]

\begin{lemma}\label{lemma52}
$X^{(m,n)}$ is reversible with respect to $\pi_{m,n}$, i.e.
\[
\pi_{m,n}(w'\cdot w)p^{(m,n)}_1(w'\cdot w,v'\cdot v)=\pi_{m,n}(v'\cdot v)p^{(m,n)}_1(v'\cdot v,w'\cdot w),\quad \forall w'\cdot w,v'\cdot v\in \tilde{L}_{m,n}.
\]
In addition,
we have the energy estimate
\[
\frac{1}{3}\mcD_{m+n,\tilde{L}_{m,n}}(f)\leq \langle f-p_1^{(m,n)}f,f\rangle_{l^2(\tilde L_{m,n},\pi_{m,n})}\leq \mcD_{m+n,\tilde{L}_{m,n}}(f),\quad \forall f\in l(\tilde L_{m,n}),
\]
where $\langle f,g\rangle_{l^2(\tilde L_{m,n},\pi_{m,n})}:=\sum_{w'\cdot w\in \tilde{L}_{m,n}}f(w'\cdot w)g(w'\cdot w)\pi_{m,n}(w'\cdot w)$.
\end{lemma}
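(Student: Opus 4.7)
The plan is to first establish reversibility by detailed balance and then derive the Dirichlet-form comparison via the standard reversibility identity. The basic observation powering the entire argument is that, by the symmetry condition in the definition of $\USC^{(3)}$, the folding map $\tilde{\Theta}_{m,n}$ is locally a composition of elements of $\msG$, hence induces a bijection of $W_n$ that preserves both $\deg$ and membership in $\partial W_n$. Consequently $\pi_{m,n}(w'\cdot w) = \pi_n(\tilde{\Theta}_{m,n}(w'\cdot w)) = \pi_n(w)$, which I will use repeatedly.

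For reversibility I will verify detailed balance by case analysis of the four clauses defining $p_1^{(m,n)}$. The self-loop case is automatic, and the \emph{same-block} case ($w' = v'$) reduces immediately to $1 = 1$ using $\pi_{m,n}(w'\cdot w) = \pi_n(w)$ and the formula $p_1^{(m,n)}(w'\cdot w, w'\cdot v) = \pi_n(w)^{-1}$. The \emph{cross-block} case ($w' \neq v'$) reduces to showing that the external degree $e := \deg(w'\cdot w) - \deg(w)$ in $\tilde{L}_{m,n}$ takes the same value at both endpoints of the edge. I will prove this by splitting neighbor contributions by direction: the transverse directions (orthogonal to the edge direction $(o_0,s_0)$) are put in bijection by the reflection across the shared face, and the longitudinal directions $(o_0,s_0)$ and $(o_0,1-s_0)$ each contribute exactly one unit, one to each endpoint.

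Given reversibility, the standard identity
\[
\langle f - p_1^{(m,n)}f, f\rangle_{l^2(\tilde{L}_{m,n}, \pi_{m,n})} = \sum_{\{x,y\}\text{ edge}} c(\{x,y\})\bigl(f(x)-f(y)\bigr)^2
\]
holds with weights $c(\{x,y\}) = \pi_{m,n}(x)\, p_1^{(m,n)}(x,y)$, which compute to $c = 1$ on same-block edges and $c = (e+1)^{-1}$ on cross-block edges. The upper bound $\langle f - p_1^{(m,n)}f, f\rangle \leq \mcD_{m+n, \tilde L_{m,n}}(f)$ follows from $c \leq 1$. For the lower bound I will show $e \leq 2$: because $w' \in \tilde{F}_{m,2,0}$, no external neighbor of $w'\cdot w$ in $\tilde{L}_{m,n}$ can lie in either $x_2$ direction (the $-x_2$ side is on $\partial\square$ and the $+x_2$ side exits $\tilde{L}_{m,n}$), so external neighbors are confined to the $\pm x_1$ and $\pm x_3$ directions; since $w$'s $x_1$- and $x_3$-coordinates each place $w$ on at most one face of $W_n$, at most two external neighbors can occur. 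Thus $c \geq 1/3$, and the lower bound follows.

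The most delicate step will be matching external degrees across a cross-block edge, which requires careful coordinate-by-coordinate bookkeeping combined with the reflection symmetry; the constraint $w' \in \tilde{F}_{m,2,0}$ plays a dual role, both enforcing the symmetry-via-direction analysis and capping $e$ at $2$ to yield the clean constant $1/3$.
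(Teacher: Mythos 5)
Your proof is correct and, in substance, is exactly the detailed verification behind the paper's one-line assertion: the paper's proof simply records that $\pi_{m,n}(x)p_1^{(m,n)}(x,y)\in\{1,\tfrac12,\tfrac13\}$ on edges, and your computation (same-block conductance $1$, cross-block conductance $(e+1)^{-1}$ with $e\in\{0,1,2\}$, self-loops not contributing) is precisely the content of that assertion. The foundational identity $\pi_{m,n}(w'\cdot w)=\pi_n(w)$ is right: since $w'\in\tilde F_{m,2,0}\subset(\partial W_1)^m$ the composition $\Theta^{(3)}\circ(k^m\Psi_{w'})$ is a composition of axis reflections $\Gamma_{[o]}\in\msG$, so $\tilde\Theta_{m,n}$ restricted to $w'\cdot W_n$ is $\tilde\Gamma_{w'}$, a graph automorphism of $(W_n,E_n)$ that preserves $\deg$ and $\partial W_n$. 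The reduction of cross-block detailed balance to matching external degrees, and the cap $e\le 2$ from the absence of $x_2$-direction contributions within $\tilde L_{m,n}$, are the right steps.

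One implementation detail is worth flagging because your write-up leans on a phrase that is not quite literal. The "reflection across the shared face" is an interior reflection of $\square$, hence \emph{not} an element of $\msG$ and not a symmetry of $K$; so it does not by itself biject the transverse neighbors. What actually closes the gap is the pair of facts (a) $\tilde F_{m,2,0}$ contains \emph{every} grid cube meeting $\{x_2=0\}$ — this follows by iterating the face-included condition, and it guarantees that the $\pm x_{o_1}$ ($o_1\neq o_0$, $o_1\neq 2$) neighbor of $w'$ exists in $\tilde F_{m,2,0}$ iff the one for $v'$ does, since $w'$ and $v'$ share the same $x_{o_1}$ coordinate; and (b) $\tilde\Gamma_{[o_1]}$ is a $\msG$-automorphism of $W_n$, so the corresponding $n$-level word always exists. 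Your argument goes through with this spelled out, but as stated the "bijection by reflection" should be understood positionally, not as a fractal symmetry. Relatedly, both your argument and the paper's Remark~5.2 treat $\deg(w'\cdot w)-\deg(w)$ as the external degree of $w'\cdot w$ \emph{inside $\tilde L_{m,n}$}; this is the intended reading (and the one that makes $p_1^{(m,n)}$ stochastic), so your $e\le 2$ conclusion and the resulting $\tfrac13$--$1$ energy comparison are consistent with the paper's.
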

\begin{proof}
The first claim is straightforward to verify. The second claim is due to the fact that for any $w'\cdot w,v'\cdot v\in \tilde{L}_{m,n}$,
$\pi_{m,n}(w'\cdot w)p^{(m,n)}_1(w'\cdot w,v'\cdot v)\in \{1,\frac 12,\frac 13\}$ provided $p^{(m,n)}_1(w'\cdot w,v'\cdot v)\neq 0$.
\end{proof}

Next, we consider a random walk on $W_n$.

\noindent(\textbf{\emph{Transition kernels on $W_n$}}). For $n\geq 1$, $w, v\in W_n$, define
\[p^{(n)}_1(w,v)=
\begin{cases}
	\pi_n(w)^{-1},&\text{ if }w\sn v,\\
	\pi_n(w)^{-1},&\text{ if }w=v\in \partial W_n,\\
	0,&\text{ otherwise}.
\end{cases}\]
Then $p^{(n)}_1(w,v)$ is a Markov kernel on $W_n$. Also, we take $p^{(n)}_0(w,v)=\delta_{w,v}$ and for $i\geq 2$,
\[
p_i^{(n)}(w,v):=\sum_{w^{(1)},w^{(2)},\cdots,w^{(i-1)}\in W_n}p^{(n)}_1(w,w^{(1)})p^{(n)}_1(w^{(1)},w^{(2)})\cdots p^{(n)}_1(w^{(i-1)},v).
\]

\begin{lemma}\label{lemma53}
Let $X^{(n)}_i=\tilde{\Theta}_{m,n}(X^{(m,n)}_i)$, $i\geq 0$. Then, $X^{(n)}:=\{X^{(n)}_i\}_{i\geq 0}$ is a random walk on $W_n$ with transition kernels $p_i^{(n)},i\geq 1$, whose initial distribution is given by $\tilde{\Theta}_{m,n}^*\nu$ under the law $\mathbb{P}^{(m,n)}_{\nu}:=\sum_{w'\cdot w\in \tilde{L}_{m,n}}\nu(w'\cdot w)\cdot\mathbb{P}_{w'\cdot w}^{(m,n)}$  for any probability measure $\nu$ on $\tilde{L}_{m,n}$.
\end{lemma}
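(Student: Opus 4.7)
The plan is to apply a Kemeny--Snell lumpability argument to the folding map $\tilde\Theta_{m,n}$. The initial-distribution assertion is immediate, since for every $w\in W_n$,
\[
\mbPmn_\nu\bigl(X^{(n)}_0=w\bigr)=\mbPmn_\nu\bigl(X^{(m,n)}_0\in\tilde\Theta_{m,n}^{-1}(w)\bigr)=\nu\bigl(\tilde\Theta_{m,n}^{-1}(w)\bigr)=\tilde\Theta_{m,n}^\ast\nu(w).
\]
For the Markov property with kernel $p_1^{(n)}$ (from which the multi-step statement for $p_i^{(n)}$ follows by iteration), by standard lumpability it suffices to verify
\[
\sum_{v'\cdot v_0\,\in\,\tilde\Theta_{m,n}^{-1}(v)}p^{(m,n)}_1(w'_0\cdot w_0,\,v'\cdot v_0)=p^{(n)}_1(w,v)
\]
for every $w,v\in W_n$ and every representative $w'_0\cdot w_0\in\tilde\Theta_{m,n}^{-1}(w)$, with the right-hand side independent of the choice of representative.

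The key geometric input is that on each single block $w'\cdot W_n$ the folding map restricts to a bijection onto $W_n$, induced by the unique element of $\msG$ that carries $k^m\Psi_{w'}(\square)$ back to $\square$; call this bijection $\sigma_{w'}:W_n\to W_n$. The symmetry axiom $\Gamma\bigl(\bigcup_i\Psi_iK\bigr)=\bigcup_i\Psi_iK$ for $\Gamma\in\msG$ ensures that each $\sigma_{w'}$ preserves both the relation $\stackrel{n}{\sim}$ and the boundary set $\partial W_n$, so each fiber $\tilde\Theta_{m,n}^{-1}(v)$ meets $w'\cdot W_n$ in the single cell $w'\cdot\sigma_{w'}^{-1}(v)$. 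Moreover, if $w'_0\cdot w_0\stackrel{m+n}{\sim}v'\cdot v_0$ with $w'_0\neq v'$ in $\tilde F_{m,2,0}$, the two cells are mirror images across the shared face of $\Psi_{w'_0}(\square)$ and $\Psi_{v'}(\square)$, so by the very definition of $\Theta^{(3)}$ they fold to the same element of $W_n$, i.e.\ $\sigma_{v'}(v_0)=\sigma_{w'_0}(w_0)=w$.

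Given these facts, the lumping identity reduces to case analysis. If $w\stackrel{n}{\sim}v$, the only contributing summand is the within-block transition to $w'_0\cdot\sigma_{w'_0}^{-1}(v)$ (the third case in the definition of $p^{(m,n)}_1$), giving $\pi_n(w)^{-1}=p^{(n)}_1(w,v)$. If $w=v\in\partial W_n$, the contributions come from the self-loop term (first case) together with the $\deg(w'_0\cdot w_0)-\deg(w_0)$ cross-block neighbors (second case), each weighted by $(\deg(w'_0\cdot w_0)-\deg(w_0)+1)^{-1}\pi_n(w)^{-1}$; their total collapses to $\pi_n(w)^{-1}=p^{(n)}_1(w,w)$. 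In all remaining cases both sides vanish. Each outcome is manifestly independent of the representative $w'_0\cdot w_0$. The main technical hurdle is precisely the geometric identification that cross-block neighbors of $w'_0\cdot w_0$ in $\tilde L_{m,n}$ fold to the same element as $w'_0\cdot w_0$; this is where the reflection symmetry built into both $\tilde\Theta_{m,n}$ and the normalizing weights $(\deg(w'_0\cdot w_0)-\deg(w_0)+1)^{-1}$ of $p^{(m,n)}_1$ must match, after which the verification is a short bookkeeping exercise.
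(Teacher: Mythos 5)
Your proposal is correct and is essentially what the paper's ``it is easy to check'' amounts to: verifying the Kemeny--Snell lumping identity is exactly what is needed to make the path-probability factorization in the paper's one-line proof hold. The only small caveat is that the key geometric assertion (that a cross-block neighbor $v'\cdot v_0$ of $w'_0\cdot w_0$ with $v'\neq w'_0$ is necessarily the mirror image of $w'_0\cdot w_0$ across the shared face) deserves the extra observation that such an adjacency forces both $w_0$ and $v_0$ to lie in $\partial W_n$-faces, hence both words lie in $(\partial W_1)^{m+n}$ and the adjacency is the face-to-face Case 1 of Definition \ref{def210}, which by Remark \ref{remark210} does give mirror images; you assert this without spelling it out, but the paper glosses over it in the same way.
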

\begin{proof}
For each $w=\tilde{\Theta}_{m,n}(v'\cdot v)$ with $v'\in \tilde F_{m,2,0}, v\in W_n$, it is easy to check that
\[
\mathbb{P}^{(m,n)}_{v'\cdot v}(X^{(n)}_0=w,X^{(n)}_1=w^{(1)},\cdots, X^{(n)}_i=w^{(i)})=p^{(n)}_1(w,w^{(1)})p^{(n)}_1(w^{(1)},w^{(2)})\cdots p^{(n)}_1(w^{(i-1)},w^{(i)}),
\]
for any $i\geq 0$ and any path $w,w^{(1)},\cdots,w^{(i)}$. The lemma follows immediately.
\end{proof}

Throughout the remaining part of this section, and throughout Sections \ref{sec6}-\ref{sec7}, we always consider $X^{(n)}$ as the random walk defined in Lemma \ref{lemma53}, which is properly coupled with $X^{(m,n)}$. Sometimes, we will use the notation $\mathbb{P}^{(n)}_w$, $w\in W_n$, which should be understood as the law $\mathbb{P}^{(m,n)}_{\nu_w}$, where $\nu_w$ is a probability measure on $\tilde L_{m,n}$ defined as $\nu_w(A):=k^{-2m}\mu_{m+n}(\tilde{\Theta}_{m,n}^{-1}(w)\cap A)$ for each $A\subset \tilde{L}_{m,n}$. So $(\Omega,\mathbb{P}^{(n)}_w,X^{(n)})$ is a random walk with respect to the filtration generated by $X^{(n)}_i,i\geq 0$.

\begin{lemma}\label{lemma54}
	$X^{(n)}$ is reversible with respect to $\pi_n$. In addition, regarding $p_1^{(n)}$ as an operator $p^{(n)}_1:l(W_n)\to l(W_n)$ by
	\[p_1^{(n)}f(w):=\sum_{v\in W_n}p_1^{(n)}(w,v)f(v),\quad \forall f\in l(W_n), \forall w\in W_n,\]
	we have $\mcD_{n}(f)=\langle f-p_1^{(n)}f,f\rangle_{l^2(W_n,\pi_{n})}$ where $\langle f,g\rangle_{l^2(W_n,\pi_n)}:=\sum_{w\in W_n}f(w)g(w)\pi_{n}(w)$.
\end{lemma}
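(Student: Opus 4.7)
\smallskip
\textbf{Proof plan.} The argument is a straightforward parallel of Lemma \ref{lemma52}, but now yielding an exact equality rather than a two-sided bound. First I would verify the detailed balance equation $\pi_n(w)p^{(n)}_1(w,v)=\pi_n(v)p^{(n)}_1(v,w)$ for every $w,v\in W_n$ by going through the three cases in the definition of $p^{(n)}_1$: when $w\sn v$, both sides equal $\pi_n(w)\cdot\pi_n(w)^{-1}=1=\pi_n(v)\cdot\pi_n(v)^{-1}$; when $w=v\in\partial W_n$ the equality is trivial; and in all remaining cases both sides vanish. The reversibility of $X^{(n)}$ with respect to $\pi_n$ follows at once.

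Next, using reversibility and the fact that $\sum_v p^{(n)}_1(w,v)=1$, a standard symmetrization gives
\[
\langle f-p^{(n)}_1 f,f\rangle_{l^2(W_n,\pi_n)}=\tfrac{1}{2}\sum_{w,v\in W_n}\pi_n(w)\,p^{(n)}_1(w,v)\,\bigl(f(w)-f(v)\bigr)^2.
\]
The diagonal terms $w=v\in\partial W_n$ contribute zero since $(f(w)-f(w))^2=0$, so the sum reduces to one over ordered pairs $(w,v)$ with $w\sn v$.

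The key observation — and the only place where this calculation differs qualitatively from Lemma \ref{lemma52} — is that for every such adjacent pair one has $\pi_n(w)\,p^{(n)}_1(w,v)=1$ exactly, with no intermediate values $\tfrac{1}{2}$ or $\tfrac{1}{3}$: the transitions within $W_n$ carry no ``splitting factor'' analogous to $(\deg(w'\cdot w)-\deg(w)+1)^{-1}$ from \eqref{eqn51}, which was precisely what produced the window between $\tfrac{1}{3}\mcD$ and $\mcD$ in Lemma \ref{lemma52}. Therefore
\[
\langle f-p^{(n)}_1 f,f\rangle_{l^2(W_n,\pi_n)}=\tfrac{1}{2}\!\!\sum_{\substack{w,v\in W_n\\ w\sn v}}\!(f(w)-f(v))^2=\mcD_n(f),
\]
where the final identity uses the symmetry of the relation $\sn$: the factor $\tfrac{1}{2}$ cancels the double counting of each adjacent pair, matching the unordered convention of the definition of $\mcD_n$.

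I expect no real obstacle here, as the lemma reduces to bookkeeping; the content worth highlighting in the write-up is simply the contrast with Lemma \ref{lemma52}, together with the verification that no diagonal contribution spoils the identification of the Dirichlet form of $X^{(n)}$ with the graph energy $\mcD_n$.
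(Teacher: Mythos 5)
Your proof is correct and is exactly the routine verification the paper omits (Lemma \ref{lemma54} is stated without a proof); detailed balance case-by-case, then the standard symmetrization using $\sum_v p^{(n)}_1(w,v)=1$, and the key observation that $\pi_n(w)p^{(n)}_1(w,v)=1$ for every edge $w\sn v$ in $W_n$ — in contrast to the values $\{1,\tfrac12,\tfrac13\}$ appearing in Lemma \ref{lemma52} — is precisely what upgrades the two-sided bound there to the exact identity here. Your reading of $\mcD_n$ as a sum over unordered adjacent pairs is the one consistent with the constants $\tfrac13$ and $1$ in Lemma \ref{lemma52}, so the bookkeeping with the factor $\tfrac12$ is right.
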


\section{Quick oscillations}\label{sec6}
In this section, for $n\geq 1$, we prove an interesting phenomenon considering the random walk $X^{(n)}$ hitting a pair of boundary faces $\tilde{F}_n:=\tilde{F}_{n,1,0}$ and $\tilde{F}_{n,1,1}$.  \vspace{0.2cm}

\noindent(\textbf{\emph{Hitting time}}). For $B\subset W_n$, we write
$\tau^{(n)}_B:=\min\{i\geq 0:X^{(n)}_i\in B\}$
for the \textit{hitting time}; similarly, for $B\subset\tilde{L}_{m,n}$, we write $\tau^{(m,n)}_B:=\min\{i\geq 0:X^{(m,n)}_i\in B\}$. \vspace{0.2cm}

\begin{definition}\label{def61}
	Define a sequence of stopping times $\mathcal{T}_j, j\geq 1$ as follows. First, we let
	\[\mathcal{T}_1=\min\{i\geq 0:X^{(n)}_i\in \tilde{F}_n\}.\]
	For $l\geq 1$, we define
	\[
	\begin{cases}
		\mathcal{T}_{2l}=\min\{i\geq \mathcal{T}_{2l-1}:X^{(n)}_i\in \tilde{F}_{n,1,1}\},\\
		\mathcal{T}_{2l+1}=\min\{i\geq \mathcal{T}_{2l}:X^{(n)}_i\in \tilde{F}_n\}.
	\end{cases}
	\]
	In particular, we take $\min\emptyset=\infty$ in the above definitions.
\end{definition}

We also introduce the notations $I_n,I_{n,+},I_{n,-}$ for the (discrete) middle layer and half spaces of $W_n$.
\[
\begin{aligned}
	&I_n:=\{w\in W_n:\Psi_wK\cap H_{1,1/2}\neq \emptyset\},\quad n\geq 1,\\
	&I_{n,+}:=\big\{w\in W_n:\Psi_wK\cap\bigcup_{1/2\leq s\leq 1}H_{1,s}\neq \emptyset\big\}, \quad n\geq 1,\\
	&I_{n,-}:=\big\{w\in W_n:\Psi_wK\cap\bigcup_{0\leq s\leq 1/2}H_{1,s}\neq \emptyset\big\}, \quad n\geq 1.
\end{aligned}
\]
The main result of this section will indicate that if $\mbEn_{w}(\mathcal{T}_1)\ll\lambda_n$ for some $w\in I_{n,+}$, then the random walk has the inertance to oscillate quickly between $\tilde{F}_n$ and $\tilde{F}_{n,1,1}$ for a period of time. The proof is based on a symmetry argument.

\begin{proposition}\label{prop62}
	There exist an increasing function $\vartheta:[0,1]\to [0,1]$ and a constant $C>0$ depending only on $K$ such that $\vartheta(0)=0$, $\vartheta$ is continuous at $0$ and
	\[\mbEn_{\nu}(\mathcal{T}_2-\mathcal{T}_1)\leq \vartheta\big(\frac{\mbEn_\nu(\mathcal{T}_1)}{C\lambda_n}\big)\cdot C\lambda_n\]
	holds for any $n\geq 1$ and any probability measure $\nu$ supported on $I_{n,+}$.
\end{proposition}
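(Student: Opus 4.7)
The plan is to combine the reflection symmetry across $H_{1,1/2}$ with the reversibility of $X^{(n)}$ and the Poincar\'e inequality for the cell graph. Write $h_0(w):=\mathbb{E}^{(n)}_w[\tau^{(n)}_{\tilde{F}_n}]$ and $h_1(w):=\mathbb{E}^{(n)}_w[\tau^{(n)}_{\tilde{F}_{n,1,1}}]$; since $\Gamma_{[1]}$ is a symmetry of the transition kernel that swaps the two target faces, $h_1 = h_0\circ\tilde\Gamma_{[1]}$. Applying the strong Markov property at $\mathcal{T}_1$ gives
\[
\mathbb{E}^{(n)}_\nu[\mathcal{T}_1] = \int h_0\,d\nu, \qquad \mathbb{E}^{(n)}_\nu[\mathcal{T}_2-\mathcal{T}_1] = \int h_1\,d\nu' = \int h_0\,d\tilde\nu',
\]
where $\nu'$ is the $\mathbb{P}^{(n)}_\nu$-law of $X^{(n)}_{\mathcal{T}_1}$ on $\tilde{F}_n$ and $\tilde\nu':=(\tilde\Gamma_{[1]})_*\nu'$ is supported on $\tilde{F}_{n,1,1}\subset I_{n,+}$. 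Thus both expectations in the target inequality are of the form $\int h_0\,d\rho$ for a probability measure $\rho$ on $I_{n,+}$, and the proposition reduces to comparing $\int h_0\,d\tilde\nu'$ with $\int h_0\,d\nu$.

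For the comparison, I would exploit Poincar\'e together with the description of $\tilde\nu'$ as a harmonic-measure-type object. By the reversibility in Lemma~\ref{lemma54}, $\mathcal{D}_n(h_0) = \sum_{w\notin\tilde{F}_n} h_0(w)\pi_n(w)$, and Poincar\'e yields $\|h_0-\bar h_0\|_{\ell^2(\mu_n)}^2 \lesssim \lambda_n\mathcal{D}_n(h_0)$. Combined with the standard bound $\|h_0\|_\infty \lesssim \lambda_n$, this forces $\nu$ to concentrate on a thin sublevel set $\{h_0\leq \vartheta_0(\epsilon)\lambda_n\}\cap I_{n,+}$ whenever $\epsilon:=\mathbb{E}^{(n)}_\nu[\mathcal{T}_1]/(C\lambda_n)$ is small. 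The role of the reflection symmetry is that the stopped distribution $\nu'$ on $\tilde{F}_n$ arises from reversing the walk from $\nu$; after applying $\tilde\Gamma_{[1]}$, the reflected measure $\tilde\nu'$ encodes a walk living on the same side $I_{n,+}$ and reaching $\tilde{F}_{n,1,1}$ via paths whose typical length is, by reversibility, comparable to $\mathcal{T}_1$. Integrating the pointwise bound $h_0\leq\vartheta_0(\epsilon)\lambda_n$ against $\tilde\nu'$ on the relevant sublevel set and controlling the complement by the trivial bound $h_0\leq C\lambda_n$ produces $\int h_0\,d\tilde\nu' \leq \vartheta(\epsilon)\cdot C\lambda_n$ with $\vartheta(0)=0$ and $\vartheta$ increasing and continuous at $0$.

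The main obstacle will be making this reflect-and-reverse comparison quantitative without invoking any scaling-limit, heat-kernel, or mixing estimates (all of which are outputs rather than inputs of the paper at this stage). Specifically, one needs a bare-hands probabilistic argument that tracks how the random walk first hits $\tilde{F}_n$ from $I_{n,+}$ and then uses $\tilde\Gamma_{[1]}$ to compare it against trajectories originating from $\tilde{F}_{n,1,1}$, all while only exploiting the Poincar\'e inequality on $W_n$ and the discrete symmetry of $K$. If a direct constructive comparison proves too delicate, the fallback is a soft-compactness route: assume a sequence $(n_k,\nu_k)$ violating the claimed inequality with $\mathbb{E}_{\nu_k}[\mathcal{T}_1]/\lambda_{n_k}\to 0$ but $\mathbb{E}_{\nu_k}[\mathcal{T}_2-\mathcal{T}_1]/\lambda_{n_k}\geq\delta$, then produce renormalized functions $h_0^{(n_k)}/\lambda_{n_k}$ on $W_{n_k}$ whose normalized Dirichlet energies and face-averages, by the Poincar\'e bound and reflection antisymmetry, cannot coexist. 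Either route yields an increasing $\vartheta$, continuous at $0$, precisely as stated.
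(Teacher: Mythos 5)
Your reduction in the first paragraph is correct: by the strong Markov property at $\mathcal{T}_1$ and by the $\Gamma_{[1]}$-equivariance $h_1=h_0\circ\tilde\Gamma_{[1]}$, the statement is equivalent to comparing $\int h_0\,d\tilde\nu'$ against $\int h_0\,d\nu$. But the comparison itself is exactly where the proposal stops. You observe (via Markov's inequality, Poincar\'e is not really needed for this) that $\int h_0\,d\nu\ll\lambda_n$ forces $\nu$ to concentrate where $h_0$ is small. That gives no control over $\tilde\nu'$, because $\nu'$ is a harmonic measure on $\tilde F_n$ whose support can be arbitrarily far from $\tilde\Gamma_{[1]}$-images of the sublevel set carrying $\nu$; one would need a genuine estimate showing that the exit distribution of a fast-exiting walk concentrates on points whose mirror images also have small $h_0$, and no such estimate is produced. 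You flag this as the main obstacle, which is accurate --- it is a gap, not a technical detail. The soft-compactness fallback is also unconvincing here: the cell graphs $(W_n,E_n)$ live on different discrete spaces, so extracting a meaningful subsequential limit of $h_0^{(n_k)}/\lambda_{n_k}$ would require uniform regularity estimates (of the very sub-Gaussian type that this proposition is a step towards), making the argument circular at this stage.

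The paper's proof is structured differently and avoids the measure-comparison entirely. It introduces the auxiliary stopping times $\mathcal{S}_j$ that alternate between the middle layer $I_n$ and $A_n\cup B_n$ (with $A_n=\tilde F_n$, $B_n=\tilde F_{n,1,1}$), and uses the reflection $\tilde\Gamma_{[1]}$ only \emph{at the middle layer}: for each $w\in I_n$ one gets, by comparing $w$ to its reflected partner via a single transition probability, the two pointwise bounds
\[
C_1\,\mbEn_w(\tau_{B_n})\le\mbEn_w(\tau_{A_n}),\qquad
\mbPn_w(\tau_{A_n}<\tau_{B_n})\ge C_1.
\]
Because $\nu$ is supported on $I_{n,+}$, the walk must pass through $I_n$ before reaching $A_n$, so the events $E_l=\{\mathcal S_{2},\dots,\mathcal S_{2l}\in B_n\}=\{\mathcal S_{2l}<\mathcal T_1\}$ decay geometrically, $\mbPn_\nu(E_l)\le(1-C_1)^l$. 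Decomposing $\Omega$ along these events and applying the strong Markov property at each $\mathcal S_{2l-1}$, the first pointwise bound converts the contribution of each excursion to $\mathcal T_2-\mathcal T_1$ into a multiple of the corresponding contribution to $\mathcal T_1$, while the tail $E_L$ is controlled by Proposition~\ref{prop66}. This gives directly
\[
\mbEn_\nu(\mathcal T_2-\mathcal T_1)\le C_1^{-1}L\,\mbEn_\nu(\mathcal T_1)+C\lambda_n(1-C_1)^L,
\]
and optimizing over $L$ produces $\vartheta(t)=\inf_{L\ge 0}\{C_1^{-1}Lt+(1-C_1)^L\}$, which is increasing, continuous at $0$, and vanishes at $0$. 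The takeaway: the symmetry you want to exploit is most useful \emph{pointwise at the middle layer}, fed into a renewal/excursion decomposition, rather than as a global comparison of hitting distributions; the latter seems genuinely hard to control with the tools at hand.
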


\subsection{Proof of Proposition \ref{prop62}} We prove Proposition \ref{prop62} in this subsection. First, let's show an upper bound of $\mathbb{E}(\mathcal{T}_1)$, which is based on a standard argument.

\begin{lemma}[Nash inequality]\label{lemma63}
	There exist $\kappa>0$ and a constant $C>0$, both depending only on $K$, such that
	\[
	\|f\|_{l^2(W_n,\pi_n)}^{2+4/\kappa}\leq C\cdot \big(\lambda_n\mcD_n(f)+\|f\|_{l^2(W_n,\pi_n)}^2\big)\cdot N^{-2n/\kappa}\|f\|_{l^1(W_n,\pi_n)}^{4/\kappa}
	\]
	holds for any $n\geq 1$ and $f\in l(W_n)$.
\end{lemma}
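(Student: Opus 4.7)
The plan is to derive this Nash-type inequality in the classical way: from a family of local Poincar\'e inequalities on sub-cells, followed by an optimization over the scale parameter. For each $0\leq m\leq n-1$, the self-similarity of the cell graph identifies $(w\cdot W_{n-m}, E_n|_{w\cdot W_{n-m}})$ with $(W_{n-m}, E_{n-m})$ for every $w\in W_m$, so the level-$(n-m)$ Poincar\'e inequality applies on each block. Summing over $w\in W_m$ and using that $\pi_n$ is comparable to the counting measure, I would obtain
\[
\|f-g_m\|_{l^2(W_n,\pi_n)}^2\lesssim \lambda_{n-m}\,\mcD_n(f),
\]
where $g_m(v):=\fint_{w(v)\cdot W_{n-m}}f\,d\mu_n$ is the level-$m$ piecewise average. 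Next, controlling $\|g_m\|_{l^2}^2$ by $\|f\|_{l^1}^2$: the bound $|\bar f_w|\leq N^{-(n-m)}\|f|_{w\cdot W_{n-m}}\|_{l^1}$ together with the elementary $\sum_w a_w^2\leq(\sum_w a_w)^2$ for $a_w\geq 0$ yields
\[
\|g_m\|_{l^2(W_n,\pi_n)}^2\asymp \sum_{w\in W_m}N^{n-m}\bar f_w^2\lesssim N^{m-n}\|f\|_{l^1(W_n,\pi_n)}^2.
\]

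Combining the two bounds via $\|f\|_{l^2}^2\leq 2\|f-g_m\|_{l^2}^2+2\|g_m\|_{l^2}^2$ and invoking the scaling $\lambda_{n-m}\lesssim \lambda_n k^{-2m}$ (which follows from Proposition \ref{prop32}, since $\lambda_n\gtrsim \lambda_{n-m}N^mR_m\gtrsim \lambda_{n-m}k^{2m}$ by (\ref{eqn31})--(\ref{eqn32})), I arrive at the one-parameter family
\[
\|f\|_{l^2(\pi_n)}^2\lesssim \lambda_n k^{-2m}\mcD_n(f)+N^{m-n}\|f\|_{l^1(\pi_n)}^2,\qquad 0\leq m\leq n-1.
\]
Setting $x=N^m$, $d_H=\log N/\log k$, $S=\|f\|_{l^2}^2$, $E=\mcD_n(f)$ and $M=\|f\|_{l^1}$, this reads $S\lesssim \lambda_n x^{-2/d_H}E+xN^{-n}M^2$. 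Balancing the two terms at $x_\ast\asymp (\lambda_n E N^n/M^2)^{d_H/(d_H+2)}$ and raising the resulting inequality to the power $(d_H+2)/d_H$ produces the desired Nash inequality with $\kappa=d_H$:
\[
S^{1+2/d_H}\lesssim \lambda_n E\cdot N^{-2n/d_H}M^{4/d_H}.
\]

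The $+\|f\|_{l^2}^2$ term on the right-hand side of the statement is there precisely to absorb the degenerate regime where $x_\ast$ falls outside $[1,N^{n-1}]$, that is, when $f$ is nearly constant so that $\lambda_n\mcD_n(f)\lesssim N^{-n}\|f\|_{l^1}^2$. In that regime the elementary Cauchy--Schwarz inequality $\|f\|_{l^1}^2\leq N^n\|f\|_{l^2}^2$ shows that $\|f\|_{l^2}^2\cdot N^{-2n/d_H}\|f\|_{l^1}^{4/d_H}$ is already comparable to $\|f\|_{l^2}^{2+4/d_H}$, so the safety term closes the inequality. The only non-routine input is the scaling $\lambda_{n-m}\lesssim \lambda_n k^{-2m}$, which is exactly what Proposition \ref{prop32} supplies; beyond this, everything is a straightforward interpolation, and I do not anticipate any serious obstacle.
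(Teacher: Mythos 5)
Your overall strategy---piecewise-average decomposition, local Poincar\'e inequality, and optimization over the scale $m$---is exactly the paper's, and your one-parameter family $\|f\|_{l^2}^2\lesssim \lambda_n k^{-2m}\mcD_n(f)+N^{m-n}\|f\|_{l^1}^2$ is the paper's inequality (6.1) up to a constant. Where the two proofs differ is in the optimization. The paper does \emph{not} take $\kappa=d_H$: it fixes $\kappa\geq d_H$ large enough that $k^{(2+\kappa)n}\gtrsim N^n\lambda_n$, which, combined with the observation $\mcD_n(f)\lesssim\|f\|_{l^1}^2$, forces the balance point $m^*$ (computed with exponent $2+\kappa$ rather than $2+d_H$) to lie below $n$; thus the only boundary exit the paper ever meets is $m^*<0$, its Case 2, absorbed by the $+\|f\|_{l^2}^2$ term.

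You instead keep $\kappa=d_H$, and here your handling of the boundary is incomplete. Your sentence ``when $x_\ast$ falls outside $[1,N^{n-1}]$, that is, when $f$ is nearly constant so that $\lambda_n\mcD_n(f)\lesssim N^{-n}\|f\|_{l^1}^2$'' conflates the two exits: the stated condition is $x_*\lesssim 1$ only. The opposite exit $x_*>N^{n-1}$ occurs when $\lambda_n\mcD_n(f)$ is much larger than $N^{-n}\|f\|_{l^1}^2$---e.g.\ for a delta mass whenever $\lambda_n\gg k^{2n}$, which nothing available at this stage rules out (indeed, $\lambda_n\asymp(N/\rho)^n$ with $\rho<N/k^2$). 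On that side your Cauchy--Schwarz closure does not apply: the safety term would require $\|f\|_{l^2}^2\lesssim N^{-n}\|f\|_{l^1}^2$, which is false for concentrated $f$. The high exit does close, but it needs its own (one-line) argument: take $m=n-1$, note the Dirichlet term dominates there, and use $N^{-2n/d_H}=k^{-2n}$ so that the spare factor $k^{-2(n-1)}$ versus $k^{-2n}$ is a harmless constant $k^2$. So the plan is sound and essentially the paper's, but as written there is a concrete hole in the boundary analysis; repairing it requires the extra case, or alternatively adopting the paper's trick of enlarging $\kappa$.
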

\begin{proof}
	Noticing that $\mu_n\leq \pi_n\leq 8\mu_n$, we consider the $l^p$-norm with respect to $\mu_n$ instead for convenience.
	
	For $0\leq m\leq n$, define $f_m\in l(W_n)$ by $f_m(w\cdot v)=\fint_{w\cdot W_{n-m}}fd\mu_n$ for each $w\in W_m,v\in W_{n-m}$. In particular, $f_n=f$ and $f_0\equiv \fint_{W_n}fd\mu_n$. Then, we have for some $C_1>0$,
	\begin{equation}\label{eqn61}
		\begin{aligned}
			\|f\|_{l^2(W_n,\mu_n)}^2
			&=\|f-f_m\|_{l^2(W_n,\mu_n)}^2+\|f_m\|_{l^2(W_n,\mu_n)}^2\\
			&\leq \lambda_{n-m}\mcD_n(f)+N^{m-n}\|f_m\|_{l^1(W_n,\mu_n)}^2\\
			&\leq C_1k^{-2m}\big(\lambda_n\mcD_n(f)\big)+k^{md_H}N^{-n}\|f\|_{l^1(W_n,\mu_n)}^2,
		\end{aligned}
	\end{equation}
	where in the second line we use the observation
	\[
	\|f-f_m\|^2_{l^2(W_n,\mu_n)}= \sum_{w\in W_m}\|f-f_m\|^2_{l^2(w\cdot W_{n-m},\mu_n)}\leq \lambda_{n-m}\sum_{w\in W_m}\mcD_{n,w\cdot W_{n-m}}(f),
	\]
	and in the third line we use the fact that $\lambda_n\gtrsim k^{2m}\lambda_{n-m}$ as shown in Proposition \ref{prop32}.
	
	The desired inequality then follows by considering two possible cases.\vspace{0.2cm}
	
		\textit{Case 1}. $N^{-n}\|f\|^2_{l^1(W_n,\mu_n)}\leq \lambda_n\mcD_n(f)$. In this case, {we notice that $\|f\|^2_{l^1(W_n,\mu_n)}\gtrsim \mcD_n(f)$}, so we fix some  $\kappa\geq d_H$ such that $k^{(2+\kappa)n}\gtrsim N^n\lambda_n$ (which is plausible by Proposition \ref{prop32} and Lemma \ref{lemma33}), and we choose
	\[
	m=\Big[\frac{\log\big(\lambda_n\mcD_n(f)\big)-\log(N^{-n}\|f\|^2_{l^1(W_n,\mu_n)})}{(2+\kappa)\log k}\Big]\wedge n.
	\]
	Plug the choice $m$ into (\ref{eqn61}), we see the proposition holds for this case, with the same $\kappa$. \vspace{0.2cm}
	
	\textit{Case 2}.  $N^{-n}\|f\|^2_{l^1(W_n,\mu_n)}\geq \lambda_n\mcD_n(f)$.  In this case,
	\[
	\begin{aligned}
		\|f\|_{l^2(W_n,\mu_n)}^2&= \big\|f-\fint_{W_n} fd\mu_n\big\|_{l^2(W_n,\mu_n)}^2+N^n\big|\fint_{W_n}fd\mu_n\big|^2\\
		&\leq \lambda_n\mcD_n(f)+N^{-n}\|f\|^2_{l^1(W_n,\mu_n)}\leq 2 N^{-n}\|f\|^2_{l^1(W_n,\mu_n)}.
	\end{aligned}
	\]
	So the desired inequality holds trivially.
\end{proof}

For convenience, for each operator $p:l(W_n)\to l(W_n)$ and $s,t\in [1,\infty]$, we write
\[
\|p\|_{l^s\to l^t}=\sup_{f\in l(W_n)\setminus\{0\}}\frac{\|pf\|_{l^t(W_n,\pi_n)}}{\|f\|_{l^s(W_n,\pi_n)}}.
\]
Then, as a consequence of Lemma \ref{lemma63} and Lemma \ref{lemma54}, following standard arguments (see  \cite[Theorem 4.1]{CKS} or  \cite[Theorem 4.3]{B1} for example), we have the following estimate.
\begin{corollary}\label{coro64}
	There exists $C_1>0$ independent of $n$ such that $\|\pn_{[\lambda_n]+1}\|_{l^2\to \l^\infty}\leq C_1N^{-n/2}$.
\end{corollary}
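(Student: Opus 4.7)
The plan is to deploy the standard Nash ultracontractivity argument of Carlen--Kusuoka--Stroock (as in \cite[Theorem 4.1]{CKS} or \cite[Theorem 4.3]{B1}) on the Nash inequality of Lemma~\ref{lemma63}. By Lemma~\ref{lemma54}, $p_1^{(n)}$ is self-adjoint on $l^2(W_n,\pi_n)$; being a Markov operator it is a contraction on $l^\infty$, hence by duality also on $l^1$. Consequently
\[
\|\pn_j\|_{l^2\to l^\infty}=\|(\pn_j)^*\|_{l^1\to l^2}=\|\pn_j\|_{l^1\to l^2},
\]
and this common quantity is non-increasing in $j$. It therefore suffices to exhibit any single $j_0\leq[\lambda_n]+1$ for which $\|\pn_{j_0}\|_{l^1\to l^2}\leq C_1 N^{-n/2}$.

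To obtain such a bound I would track $\phi(j):=\|\pn_j f\|_{l^2(\pi_n)}^2$ for an $f\geq 0$ with $\|f\|_{l^1(\pi_n)}=1$. Applying Lemma~\ref{lemma63} to $\pn_j f$, using $\|\pn_j f\|_{l^1(\pi_n)}\leq\|f\|_{l^1(\pi_n)}=1$, yields
\[
\phi(j)^{1+2/\kappa}\leq C_2 N^{-2n/\kappa}\bigl(\lambda_n\mcD_n(\pn_j f)+\phi(j)\bigr).
\]
The goal is to combine this with a gradient bound $\phi(j)-\phi(j+1)\gtrsim\mcD_n(\pn_j f)$, so that the Nash inequality becomes a discrete differential inequality whose solution decays like $\phi(j)\lesssim N^{-n}(\lambda_n/j)^{\kappa/2}$ for $1\leq j\lesssim\lambda_n$; evaluating at $j=[\lambda_n]+1$ then gives the claim. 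The $\phi(j)$ term on the right is absorbed by an exponential multiplier $e^{-cj/\lambda_n}$, which is $O(1)$ for $j\asymp\lambda_n$ because $\lambda_n\gtrsim k^{2n}$ by Proposition~\ref{prop32}, and the discrete Gronwall integration is then routine bookkeeping.

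The main obstacle is the gradient bound itself: the spectrum of $p_1^{(n)}$ sits in $[-1,1]$ and need not be bounded away from $-1$, so $I-(\pn_1)^2$ does not in general dominate a multiple of $I-\pn_1$. The standard workaround is to run the Nash iteration on the lazy chain $\tilde P:=(I+\pn_1)/2$, which is still a reversible Markov kernel, has Dirichlet form $\tfrac12\mcD_n$, and has spectrum in $[0,1]$; then $I-\tilde P^{\,2}\geq I-\tilde P$ gives $\tilde\phi(j)-\tilde\phi(j+1)\geq\tfrac12\mcD_n(\tilde P^j f)$, and the Carlen--Kusuoka--Stroock machine produces $\|\tilde P^{\,[\lambda_n]+1}\|_{l^1\to l^2}\leq C_3 N^{-n/2}$. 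To transfer the bound from $\tilde P$ back to $\pn_1$, I would simply rerun the same Nash iteration directly on the positive operator $Q:=(\pn_1)^2$ (equivalently, along even times of the original walk), whose spectrum lies in $[0,1]$ automatically; here the presence of self-loops $\pn_1(w,w)=\pi_n(w)^{-1}>0$ for $w\in\partial W_n$ is what prevents a genuine bipartite obstruction and ensures that $\mcD_n$ is comparable to the Dirichlet form of $Q$ on the relevant iterates. Modulo this comparison the argument is mechanical, and yields $\|\pn_{[\lambda_n]+1}\|_{l^2\to l^\infty}\leq C_1 N^{-n/2}$.
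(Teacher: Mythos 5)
Your overall route --- Nash ultracontractivity as in \cite[Theorem 4.1]{CKS} or \cite[Theorem 4.3]{B1}, duality to reduce $\|\pn_j\|_{l^2\to l^\infty}$ to $\|\pn_j\|_{l^1\to l^2}$, and a discrete Gronwall on $\phi(j)=\|\pn_j f\|_{l^2(W_n,\pi_n)}^2$ --- is exactly the ``standard argument'' the paper is pointing to, and you correctly isolate the one genuine discrete-time obstacle: possible spectral accumulation of $\pn_1$ near $-1$. But the workaround in your last paragraph does not close. Running the iteration on $Q=(\pn_1)^2$ requires the comparison $\langle (I-Q)g,g\rangle_{l^2(W_n,\pi_n)}\gtrsim\mcD_n(g)$; since $I-Q=(I-\pn_1)(I+\pn_1)$, this needs $\mathrm{spec}(\pn_1)$ to be \emph{uniformly in $n$} separated from $-1$. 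The self-loops on $\partial W_n$ do not deliver that: they live on only $O(k^{2n})$ of the $N^n$ cells, a vanishing fraction, so they give no quantitative $n$-uniform gap at $-1$, and positivity of the iterates $g=Q^j f$ does not prevent $g$ from carrying mass near the bottom of the spectrum. As written, the comparison you invoke is unjustified and in general false.

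The standard transfer from the lazy kernel $\tilde P:=(I+\pn_1)/2$ back to $\pn_1$ avoids any Dirichlet-form comparison. By reversibility with respect to $\pi_n$,
\[
\|\pn_j\|_{l^2\to l^\infty}^2 = \|\pn_j\|_{l^1\to l^2}^2 = \sup_{w\in W_n}\frac{\pn_{2j}(w,w)}{\pi_n(w)},
\]
and the right side is non-increasing in $j$ since $\pn_{2j}(w,w)/\pi_n(w)=\|\pn_j(\delta_w/\pi_n(w))\|_{l^2(W_n,\pi_n)}^2$; also $\pn_i(w,w)\ge 0$ for every $i$. Hence
\[
\tilde P^{\,2j}(w,w)=2^{-2j}\sum_{i=0}^{2j}\binom{2j}{i}\pn_i(w,w)\ \ge\ 2^{-2j}\sum_{\substack{0\le i\le 2j\\ i\ \mathrm{even}}}\binom{2j}{i}\pn_{2j}(w,w)\ =\ \tfrac12\,\pn_{2j}(w,w),
\]
which gives $\|\pn_j\|_{l^2\to l^\infty}\le\sqrt 2\,\|\tilde P^{\,j}\|_{l^2\to l^\infty}$ with no spectral hypothesis on $\pn_1$. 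The Nash iteration for $\tilde P$ (Dirichlet form $\tfrac12\mcD_n$, spectrum in $[0,1]$ so that $I-\tilde P^{2}\ge I-\tilde P$) is the part of your argument that does work as written, and after this return-probability inequality it gives the claimed bound at $j=[\lambda_n]+1$. So keep the lazy-chain Nash machine, but replace the $\mcD_Q\gtrsim\mcD_n$ step by this on-diagonal comparison.
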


\noindent(\textbf{\emph{Dirichlet transition kernels on $W_n$}}). For $i\geq 0$ and $B\subset W_n$, we define
\[
\pn_{B,i}(w,v)=\mbEn_w(X^{(n)}_i=v,\tau_B>i), \quad\forall w,v\in W_n.
\]
For $f\in l(W_n)$, we write $\pn_{B,i} f(w)=\sum_{v\in W_n}\pn_{B,i}(w,v)f(v)$.
In particular, we can see that $\pn_{B,1}f=1_{W_n\setminus B}\cdot \pn_1(1_{W_n\setminus B}\cdot f)$, and the operator $p^{(n)}_{B,1}$ is self-adjoint, where $1_A$ is the characteristic function of $A$. In addition, as operators from $l(W_n)$ to $l(W_n)$, $\pn_{B,i}=(\pn_{B,1})^{\circ i}$ for $i\geq 1$.

\begin{lemma}\label{lemma65}
	$\|\pn_{\tilde{F}_n,1}\|_{l^2\to\l^2}\leq 1-\frac{C_2}{\lambda_n}$ for some $C_2>0$ depending only on $K$.
\end{lemma}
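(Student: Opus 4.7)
The key observation is that $\pn_{\tilde F_n,1}$ is self-adjoint on $l^2(W_n,\pi_n)$ by reversibility (Lemma~\ref{lemma54}), and $\pn_{\tilde F_n,1}f$ depends only on the restriction $f|_{W_n\setminus\tilde F_n}$ and vanishes on $\tilde F_n$. Hence the operator norm equals the spectral radius of the restriction to $l_0:=\{g\in l(W_n):g|_{\tilde F_n}=0\}$. For $f\in l_0$, Lemma~\ref{lemma54} yields
\[\langle\pn_{\tilde F_n,1}f,f\rangle_{l^2(\pi_n)}=\|f\|^2_{l^2(\pi_n)}-\mcD_n(f),\]
so the desired operator-norm bound is equivalent to
\[\frac{C_2}{\lambda_n}\|f\|^2\leq\mcD_n(f)\leq\Big(2-\frac{C_2}{\lambda_n}\Big)\|f\|^2,\qquad\forall f\in l_0\setminus\{0\},\]
controlling the largest and smallest eigenvalues of $\pn_{\tilde F_n,1}|_{l_0}$ respectively.

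The lower bound is a Poincar\'e--Dirichlet inequality for functions vanishing on the face $\tilde F_n$. My plan is to prove it by a telescoping slice argument: decompose $W_n$ into $k^n$ layers parallel to $\tilde F_n$, use the Poincar\'e inequality of Definition~\ref{def31} locally on each layer (with constant $\asymp\lambda_n$ via Remark~\ref{remark33}) to control the in-layer oscillation of $f$, and propagate control of the layer-averages outward from the anchor $\fint_{\tilde F_n}f\,d\mu_n=0$ using the averaged face-to-face estimates of Section~\ref{sec4} (Lemma~\ref{lemma40} together with an appropriate bound on $\mathscr R_n$, applied at the right scale). Summing the layer contributions then yields $\|f\|^2\leq C\lambda_n\mcD_n(f)$.

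The upper bound rules out eigenvalues of $\pn_{\tilde F_n,1}$ near $-1$, and follows from the algebraic identity
\[\langle(I+p_1^{(n)})f,f\rangle_{l^2(\pi_n)}=\sum_{\substack{w\sn v\\w<v}}\big(f(w)+f(v)\big)^2+2\sum_{w\in\partial W_n}f(w)^2,\]
obtained directly from the definition of $p_1^{(n)}$, its self-loops at boundary words, and reversibility. This expresses $2\|f\|^2-\mcD_n(f)$ as a sum of squares; the boundary term (absent for a purely bipartite model) provides the quantitative gap away from $-1$ once combined with the Poincar\'e--Dirichlet estimate above (localized near $\partial W_n\setminus\tilde F_n$). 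The main obstacle will be the Poincar\'e--Dirichlet inequality itself: since $\tilde F_n$ is a codimension-one object whose cardinality is exponentially smaller than $\#W_n=N^n$, the naive substitution of $\fint_{\tilde F_n}f\,d\mu_n=0$ into the global Poincar\'e inequality loses an exponential factor in $n$, and the telescoping slice argument is the delicate ingredient that recovers the correct order $\lambda_n$.
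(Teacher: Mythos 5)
Your reduction to the two inequalities $\frac{C_2}{\lambda_n}\|f\|^2\le\mcD_n(f)\le(2-\frac{C_2}{\lambda_n})\|f\|^2$ on $l_0$ is where you diverge from the paper, and the upper bound is the part that does not clearly go through. Your algebraic identity $\langle(I+p_1^{(n)})f,f\rangle=\sum_{w<v,\,w\sn v}(f(w)+f(v))^2+2\sum_{w\in\partial W_n}f(w)^2$ is correct, but it gives no useful lower bound for $f$ supported away from $\partial W_n$ and sign-alternating along edges: the sum-of-squares term can be very small, and the boundary term then vanishes, so ``the boundary term provides the quantitative gap'' is not an argument — it is exactly the thing you would still need to prove, and it is a genuine anti-bipartiteness estimate for the cell graph. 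The paper avoids this entirely by exploiting that $p^{(n)}_{\tilde F_n,1}$ has \emph{nonnegative entries}: for any $f$, $|\langle p^{(n)}_{\tilde F_n,1}f,f\rangle|\le\langle p^{(n)}_{\tilde F_n,1}|f|,|f|\rangle=\langle p^{(n)}_1 g,g\rangle=\|g\|^2-\mcD_n(g)$ where $g=|f|\cdot 1_{W_n\setminus\tilde F_n}$, so only the one-sided (large-eigenvalue) bound is needed, and self-adjointness then gives the operator-norm estimate. That is the observation you are missing, and it eliminates the entire second half of your plan.

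For the Poincar\'e--Dirichlet inequality itself, the paper's proof is also shorter than your slice-by-slice scheme and worth internalizing: write $\|g\|^2_{l^2(\mu_n)}=\|g-\fint_{W_n}g\,d\mu_n\|^2_{l^2(\mu_n)}+N^n\big|\fint_{W_n}g\,d\mu_n\big|^2$, bound the first term by $\lambda_n\mcD_n(g)$ directly from the definition of $\lambda_n$, and bound the second by Proposition~\ref{prop41} applied with a measure $\nu$ carried on $\tilde F_n$ (which satisfies~\eqref{eqn41} and gives $\fint g\,d\nu=0$ since $g|_{\tilde F_n}=0$), yielding $N^n\big|\fint g\,d\mu_n\big|^2\lesssim\lambda_n\mcD_n(g)$. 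This uses the dyadic telescoping you are sketching, but already packaged inside Proposition~\ref{prop41}, so it does not need to be re-derived. Your own sketch of the telescoping has a gap: applying ``the Poincar\'e inequality of Definition~\ref{def31} locally on each layer with constant $\asymp\lambda_n$'' is unjustified, because the layers are essentially two-dimensional slices rather than rescaled copies of $W_m$, and nothing in Remark~\ref{remark33} gives a Poincar\'e constant for such slices.
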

\begin{proof}
Let $f\in l(W_n)$, we define $|f|$ by $|f|(w)=|f(w)|,\forall w\in W_n$, and we let $g=|f|\cdot 1_{W_n\setminus \tilde{F}_n}$. Then, we can find constants $C_2,C_3>0$ depending only on $K$ such that
\[
\begin{aligned}
	\langle g-\pn_{\tilde{F}_n,1}g,g\rangle_{l^2(W_n,\pi_n)}&=
	\langle g-\pn_1g,g\rangle _{l^2(W_n,\pi_n)}\\
	&=\mcD_n(g)\geq C_3 \lambda_n^{-1}\|g\|^2_{l^2(W_n,\mu_n)}\geq C_2 \lambda_n^{-1}\|g\|^2_{l^2(W_n,\pi_n)},
\end{aligned}
\]
where the second equality is due to Lemma \ref{lemma54}, the first inequality is due to the facts that $N^n|\fint_{W_n}gd\mu_n|^2\lesssim\lambda_n\mcD(g)$ by Proposition \ref{prop41}, and $\|g-\fint_{W_n} gd\mu_n\|^2_{l^2(W_n,\mu_n)}\leq \lambda_n\mcD_n(g)$ by the definition of $\lambda_n$. It follows that
\[\begin{aligned}
\langle \pn_{\tilde{F}_n,1}f,f\rangle_{l^2(W_n,\pi_n)}&\leq \langle \pn_{\tilde{F}_n,1}|f|,|f|\rangle _{l^2(W_n,\pi_n)}=\langle \pn_{\tilde{F}_n,1}g,g\rangle_{l^2(W_n,\pi_n)}\\
&\leq (1-C_2\lambda_n^{-1})\|g\|^2_{l^2(W_n,\pi_n)}\leq (1-C_2\lambda_n^{-1})\|f\|^2_{l^2(W_n,\pi_n)}.
\end{aligned}\]	
Noticing that the inequality holds for any $f\in l(W_n)$ and $\pn_{\tilde{F}_n,1}$ is self-adjoint, the lemma follows.
\end{proof}

\begin{proposition}\label{prop66}
	$\mbEn_w(\mathcal{T}_1)\leq C\lambda_n,\forall w\in W_n$ for some constant $C>0$ depending only on $K$.
\end{proposition}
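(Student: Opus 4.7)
The plan is to express the expected hitting time as a sum over iterates of the Dirichlet transition kernel and then to exploit the two operator bounds that were just established. Writing $u_i = p^{(n)}_{\tilde{F}_n,i}\mathbf{1}$ and observing $\mathbb{P}^{(n)}_w(\tau^{(n)}_{\tilde{F}_n} > i) = u_i(w)$, one has
\[
\mathbb{E}^{(n)}_w(\mathcal{T}_1) = \sum_{i=0}^{\infty} u_i(w),
\]
so the task is to control $\|u_i\|_{l^\infty}$ as a function of $i$.

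Let $i_0 = [\lambda_n]+1$. For indices $i \leq 2i_0$ we just use the trivial bound $u_i \leq 1$, contributing $O(\lambda_n)$. For $i = 2i_0 + j$ with $j \geq 0$, I would use the factorization $u_i = p^{(n)}_{\tilde{F}_n,i_0}\bigl(p^{(n)}_{\tilde{F}_n,j}(p^{(n)}_{\tilde{F}_n,i_0}\mathbf{1})\bigr)$ and chain together three norm bounds. First, since $|p^{(n)}_{\tilde{F}_n,i_0} f| \leq p^{(n)}_{i_0}|f|$ pointwise, Corollary \ref{coro64} yields $\|p^{(n)}_{\tilde{F}_n,i_0}\|_{l^2 \to l^\infty} \leq C_1 N^{-n/2}$. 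Second, Lemma \ref{lemma65} together with the self-adjointness of $p^{(n)}_{\tilde{F}_n,1}$ on $l^2(W_n,\pi_n)$ gives $\|p^{(n)}_{\tilde{F}_n,j}\|_{l^2 \to l^2} \leq (1 - C_2/\lambda_n)^j \leq e^{-C_2 j/\lambda_n}$. Third, since $p^{(n)}_{\tilde{F}_n,i_0}$ is an $l^2$-contraction and $\pi_n \asymp \mu_n$, $\|p^{(n)}_{\tilde{F}_n,i_0}\mathbf{1}\|_{l^2} \leq \|\mathbf{1}\|_{l^2(W_n,\pi_n)} \lesssim N^{n/2}$. Multiplying these together, $\|u_{2i_0 + j}\|_{l^\infty} \leq C_3\, e^{-C_2 j/\lambda_n}$.

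Summing, I would get
\[
\mathbb{E}^{(n)}_w(\mathcal{T}_1) \leq 2i_0 + C_3 \sum_{j \geq 0} e^{-C_2 j/\lambda_n} \leq 2\lambda_n + 2 + \frac{C_3 \lambda_n}{1 - e^{-C_2/\lambda_n}} \cdot e^{-C_2/\lambda_n} \leq C \lambda_n,
\]
uniformly in $w$ and $n$. The only nontrivial point is justifying the $l^2 \to l^\infty$ bound for the Dirichlet kernel from the corresponding bound for the unrestricted kernel, which follows from the pointwise domination above; every other step is just bookkeeping. So I do not anticipate a real obstacle here, as the heart of the work has already been done in Lemma \ref{lemma63}, Corollary \ref{coro64}, and Lemma \ref{lemma65}.
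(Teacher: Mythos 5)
Your proposal is correct and follows essentially the same route as the paper: expand $\mathbb{E}^{(n)}_w(\mathcal{T}_1)$ as a sum of survival probabilities $p^{(n)}_{\tilde{F}_n,i}\mathbf{1}(w)$, bound the first $O(\lambda_n)$ terms trivially by $1$, and control the tail by composing the $l^2\to l^\infty$ ultracontractivity of Corollary \ref{coro64} (transferred to the Dirichlet kernel via pointwise domination $|p^{(n)}_{\tilde{F}_n,i}f|\leq p^{(n)}_i|f|$) with the $l^2$ spectral-gap contraction of Lemma \ref{lemma65} and the crude $l^2$ bound $\|\mathbf{1}\|_{l^2(W_n,\pi_n)}\lesssim N^{n/2}$. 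The only cosmetic difference is that you factor off two copies of $p^{(n)}_{\tilde{F}_n,i_0}$ rather than one, but this changes nothing.
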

\begin{proof}
	The claim is an easy consequence of the fact that
	\[
	\mbEn_w(\mathcal{T}_1)=\sum_{i=1}^\infty \pn_{\tilde{F}_n,i}1_{W_n\setminus \tilde{F}_n}(w),\quad\forall w\in W_n,
	\]
	and the fact that
	\begin{equation}\label{eqn62}
		\|\pn_{\tilde{F}_n,i}1_{W_n\setminus\tilde F_n}\|_{l^\infty(W_n,\pi_n)}\leq
		\begin{cases}
			1,&\text{ if }1\leq i\leq \lambda_n,\\
			C_3(1-C_2\lambda_n^{-1})^{i-\lambda_n},&\text{ if }i>\lambda_n,
		\end{cases}
	\end{equation}
	where $C_2$ is the same constant in Lemma \ref{lemma65} and $C_3>0$ is a constant depending only on $K$. In fact, the first case of (\ref{eqn62}) is due to the fact $\|\pn_{\tilde{F}_n,i}\|_{l^\infty\to l^\infty}\leq 1$, and the second case of (\ref{eqn62}) is a consequence of Corollary \ref{coro64}, Lemma \ref{lemma65}, the fact that $\|1_{W_n\setminus\tilde{F}_n}\|_{l^2(W_n,\pi_n)}\lesssim N^{n/2}$, and the fact that $\pn_if\geq \pn_{\tilde{F}_n,i}f$ for any non-negative $f\in l(W_n)$.
\end{proof}

By a same proof, we can see the following result, which will be used in Section \ref{sec7}.
\begin{proposition}\label{prop67}
	Let $B$ be the same set in Example \ref{example42} (b), which connects $I_n$ and $\tilde F_{n,1,1}$. Then $\mathbb{E}^{(n)}_w(\tau_B)\leq C\lambda_n,\forall w\in W_n$ for some $C>0$ depending only on $K$.
\end{proposition}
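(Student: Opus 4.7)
The plan is to repeat, essentially verbatim, the argument used for Proposition \ref{prop66}, replacing the face $\tilde F_n$ by the connecting set $B$ throughout. The only place where the specific geometry of $\tilde F_n$ entered was in Lemma \ref{lemma65}, through the Poincar\'e-type estimate $N^n|\fint_{W_n} g d\mu_n|^2\lesssim \lambda_n\mcD_n(g)$ for functions $g$ vanishing on $\tilde F_n$. I would reprove this bound for functions vanishing on $B$ using Proposition \ref{prop41} together with Example \ref{example42}\,(b): the measure $\nu$ supplied there is supported on $B$ and satisfies (\ref{eqn41}) with $C_1\le 4$, so if $g|_B=0$ then $\fint_{W_n} g\,d\nu=0$, and Proposition \ref{prop41} gives exactly
\[
N^n\big|\fint_{W_n} g\,d\mu_n\big|^2\le 16 C_2^2\,\lambda_n\mcD_n(g).
\]

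With this substitute in hand, I would obtain the analog of Lemma \ref{lemma65}: for every $f\in l(W_n)$, setting $g=|f|\cdot 1_{W_n\setminus B}$, I have $g|_B=0$, so by the identity $\langle g-\pn_{B,1}g,g\rangle_{l^2(W_n,\pi_n)}=\mcD_n(g)$ (using Lemma \ref{lemma54} and $g|_B=0$), combined with the above Poincar\'e-type estimate and the defining property of $\lambda_n$, one concludes $\mcD_n(g)\ge C_2'\lambda_n^{-1}\|g\|_{l^2(W_n,\pi_n)}^2$ for some $C_2'>0$ depending only on $K$. Self-adjointness of $\pn_{B,1}$ then yields $\|\pn_{B,1}\|_{l^2\to l^2}\le 1-C_2'\lambda_n^{-1}$.

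The remainder of the proof is a verbatim copy of the proof of Proposition \ref{prop66}. Write $\mbEn_w(\tau_B)=\sum_{i=1}^\infty \pn_{B,i}1_{W_n\setminus B}(w)$. For $1\le i\le \lambda_n$, bound each summand trivially by $\|\pn_{B,i}\|_{l^\infty\to l^\infty}\le 1$. For $i>\lambda_n$, decompose $\pn_{B,i}=\pn_{B,i-[\lambda_n]-1}\circ \pn_{B,[\lambda_n]+1}$; the Nash inequality of Lemma \ref{lemma63} combined with the standard argument of \cite{CKS} upgrades into the ultracontractive bound $\|\pn_{[\lambda_n]+1}\|_{l^2\to l^\infty}\le C_1 N^{-n/2}$ from Corollary \ref{coro64}, and since $\pn_i f\ge \pn_{B,i}f$ for non-negative $f$, the same holds with $B$-Dirichlet kernels; then $\|\pn_{B,i-[\lambda_n]-1}\|_{l^2\to l^2}\le (1-C_2'\lambda_n^{-1})^{i-[\lambda_n]-1}$ by the previous paragraph, and $\|1_{W_n\setminus B}\|_{l^2(W_n,\pi_n)}\lesssim N^{n/2}$ trivially. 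Combining yields geometric decay $\|\pn_{B,i}1_{W_n\setminus B}\|_{l^\infty}\le C_3(1-C_2'\lambda_n^{-1})^{i-\lambda_n}$ for $i>\lambda_n$, and summing the geometric tail against the linear head contributes $O(\lambda_n)$, giving $\mbEn_w(\tau_B)\le C\lambda_n$ uniformly in $w$.

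No step should be hard. The only conceptual point, and the only one worth stating carefully, is that the specific structure of $\tilde F_n$ was never essential in Section \ref{sec6}; all that was ever needed was the existence of a measure supported on the target set that satisfies the hypothesis (\ref{eqn41}) of Proposition \ref{prop41}, and Example \ref{example42}\,(b) provides exactly such a measure on $B$. Once this observation is made, the proof is mechanical.
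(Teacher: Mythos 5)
Your proposal is correct and follows exactly the route the paper intends when it says "by a same proof." You correctly identify the one point that needs checking --- that the Poincar\'e-type bound $N^n|\fint_{W_n}g\,d\mu_n|^2\lesssim\lambda_n\mcD_n(g)$ for $g$ vanishing on the target set still holds with $\tilde F_n$ replaced by $B$ --- and the measure from Example \ref{example42}(b), supported on $B$ and satisfying (\ref{eqn41}), supplies it via Proposition \ref{prop41}; the rest then goes through verbatim.
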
\vspace{0.2cm}

\begin{proof}[Proof of Proposition \ref{prop62}]
	The proposition is a consequence of the reflection symmetry and Proposition \ref{prop66}. In particular, we let $C$ be the constant in Proposition \ref{prop66}, so $\mbEn_{\nu}(\mathcal{T}_1)\leq C\lambda_n$ and $\mbEn_{\nu}(\mathcal{T}_2-\mathcal{T}_1)\leq C\lambda_n$ for any probability measure $\nu$ on $W_n$. \vspace{0.2cm}
	
	For simplicity, we write $A_n=\tilde{F}_n,B_n=\tilde{F}_{n,1,1}$. \vspace{0.2cm}
	
\textit{Claim. There exists $C_1\in (0,1)$ depending only on $K$ such that for each $w\in I_n$,
\begin{eqnarray}
\label{eqn63}&C_1\mbEn_w(\tau_{B_n})\leq \mbEn_w(\tau_{A_n}),\\
\label{eqn64}&\mbPn_w(\tau_{A_n}<\tau_{B_n})\geq C_1.
\end{eqnarray}}

In fact, if $\Psi_w(H_{1,1/2})=H_{1,1/2}$, then $\mbEn_w(\tau_{B_n})=\mbEn_w(\tau_{A_n})$ and $\mbPn_w(\tau_{A_n}<\tau_{B_n})=\frac{1}{2}$; otherwise, there is $w'$ symmetric to $w$ about the plane $H_{1,1/2}$, so by symmetry we have
\[
\begin{aligned}
	\mbEn_w(\tau_{A_n})&\geq p^{(n)}_1(w,w')\mbEn_{w'}(\tau_{A_n})=p^{(n)}_1(w,w')\mbEn_{w}(\tau_{B_n}),\\
	\mbPn_w(\tau_{A_n}<\tau_{B_n})&\geq p^{(n)}_1(w,w')\mbPn_{w'}(\tau_{A_n}<\tau_{B_n})=p^{(n)}_1(w,w')\mbPn_{w}(\tau_{B_n}<\tau_{A_n}).
\end{aligned}
\]
Hence, $C_1=\min_{w\sn w'}\frac{p^{(n)}_1(w,w')}{1+p^{(n)}_1(w,w')}\geq \frac{1}{27}$ works for the claim.\vspace{0.2cm}

Next, we define another sequence of hitting times $\mathcal{S}_j, j\geq 1$ as follows. First, we let
	\[\mathcal{S}_1=\min\{i\geq 0:X^{(n)}_i\in I_n\}.\]
	Then, for $l\geq 1$, we define
	\[
	\begin{cases}
		\mathcal{S}_{2l}=\min\{i\geq \mathcal{S}_{2l-1}:X^{(n)}_i\in A_n\cup B_n\},\\
		\mathcal{S}_{2l+1}=\min\{i\geq \mathcal{S}_{2l}:X^{(n)}_i\in I_n\}.
	\end{cases}
	\]
	In particular, we take $\min\emptyset=\infty$ in the above definitions.
	
	Define a sequence of events: $E_l:=\{\mathcal S_{2},\mathcal S_4,\cdots,\mathcal S_{2l}\in B_n\}$ for $l\geq 1$, and let $E_0=\Omega$ be the whole sample space. Noticing that $\nu$ is supported on $I_{n,+}$, $E_l=\{\mathcal S_{2l}<\mathcal{T}_1\}$ for $l\geq 1$. Also, by (\ref{eqn64}) and the strong Markov property, $\mathbb{P}_\nu(E_l)\leq (1-C_1)^l$. For each $L\geq 0$, we decompose $\Omega=\big(\bigcup_{l=1}^{L}(E_{l-1}\setminus E_{l})\big)\bigcup E_L$ and see that
	\[
	\begin{aligned}
		\mbEn_\nu(\mathcal{T}_2-\mathcal{T}_1)&=\sum_{l=1}^L \mbEn_\nu(\mathcal{T}_2-\mathcal{T}_1;E_{l-1}\setminus E_l)+\mbEn_\nu(\mathcal{T}_2-\mathcal{T}_1;E_L)\\
		&\leq \sum_{l=1}^L \mbEn_\nu(\mathcal{T}_2-\mathcal{T}_1;E_{l-1})+\mbEn_\nu(\mathcal{T}_2-\mathcal{T}_1;E_L)\\
		&=\sum_{l=1}^L \mbEn_\nu\Big(\mbEn_{X^{(n)}_{\mathcal S_{2l-1}}}\big(\tau_{B_n}-\tau_{A_n}; \tau_{B_n}>\tau_{A_n}\big);{E_{l-1}}\Big)+\mbEn_\nu\big(\mbEn_{X^{(n)}_{\mathcal{T}_1}}(\tau_{B_n});E_L\big)\\
		&\leq \sum_{l=1}^L \mbEn_\nu\Big(\mbEn_{X^{(n)}_{\mathcal S_{2l-1}}}(\tau_{B_n});E_{l-1}\Big)+C\lambda_n(1-C_1)^L\\
		&\leq \sum_{l=1}^L \mbEn_\nu\Big(C_1^{-1}\mbEn_{X^{(n)}_{\mathcal S_{2l-1}}}(\tau_{A_n});E_{l-1}\Big)+C\lambda_n(1-C_1)^L\\
		&=C_1^{-1}\sum_{l=1}^L\mbEn_\nu(\mathcal{T}_1-\mathcal S_{2l-1};E_{l-1})+C\lambda_n(1-C_1)^L\\&\leq C_1^{-1}L\mbEn_\nu(\mathcal{T}_1)+C\lambda_n(1-C_1)^L,
	\end{aligned}
	\]
	for some constant $C>0$ depending only on $K$, where we use Proposition \ref{prop66} and the fact $\mathbb{P}(E_l)\leq (1-C_1)^{l}$ in the fourth line, and we use (\ref{eqn63}) in the fifth line. Hence, we can take
	\[\vartheta(t):=\inf_{L\geq 0}\big\{C_1^{-1}Lt+(1-C_1)^L\big\},\]
	which clearly satisfies the desired requirement in the proposition.
\end{proof}

\subsection{A consequence for $X^{(m,n)}$} Proposition \ref{prop62} will be useful combining with the coupling argument Lemma \ref{lemma53}. The quick oscillation implies quick hitting of $\tilde{F}_{m+n}\cap \tilde{L}_{m,n}$.

\begin{remark}
We remark that for $A\subset W_n$, $\tau^{(n)}_A=\tau^{(m,n)}_{\tilde{\Theta}_{m,n}^{-1}(A)}$, with the coupling setting of Lemma \ref{lemma53}.
\end{remark}

\begin{lemma}\label{lemma69}
There exists $C\in (0,1)$ such that for any $w'\cdot w\in \tilde{L}_{m,n}$ with $w'\in \tilde F_{m,2,0}$, $w\in W_n$,  we have
\[
\mbPmn_{w'\cdot w}(\tau^{(m,n)}_{\tilde{F}_{m+n}}\leq \mathcal{T}_{k^m+1})\geq C^{k^m+1}.
\]
\end{lemma}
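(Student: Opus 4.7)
The plan is to exploit the $x_1$-slab structure of the wall $\tilde L_{m,n}$ together with the local reflection symmetry inherited from $\Gamma_{[1]} \in \mathscr G$. Since the face-included condition combined with $w' \in \tilde F_{m,2,0}$ forces $(c_{w'})_1$ to lie on the $1/k^m$-grid, the wall splits into $k^m$ slabs $\{x_1 \in [j/k^m, (j+1)/k^m]\}$ separated by the planes $x_1 = j/k^m$, $j=0,\ldots,k^m$. A short computation with Definition~\ref{def51} identifies $\tilde \Theta_{m,n}^{-1}(\tilde F_{n,1,0})$ as the cells touching some boundary with $j$ even and $\tilde\Theta_{m,n}^{-1}(\tilde F_{n,1,1})$ as those touching some boundary with $j$ odd; moreover $\tilde F_{m+n}$ is exactly the set of cells touching $x_1=0$. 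Together with Lemma~\ref{lemma53}, this shows that at each $\mathcal T_l$ the walk $X^{(m,n)}$ touches a boundary $B_l$ of parity determined by $l$, and since between $\mathcal T_l$ and $\mathcal T_{l+1}$ the walk must avoid every opposite-parity boundary, $X^{(m,n)}_{\mathcal T_{l+1}}$ is forced to touch precisely one of the two adjacent boundaries $B_l \pm 1/k^m$.

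The heart of the argument will be a per-oscillation estimate: there exists $\delta>0$ depending only on $K$ such that for every $l \geq 1$ and every $v_l$ touching $B_l \in (0,1]$,
\[
\mathbb P^{(m,n)}_{v_l}\bigl(X^{(m,n)}_{\mathcal T_{l+1}} \text{ touches } B_l - 1/k^m\bigr) \geq \delta.
\]
I will prove this via the reflection $\sigma \colon x_1 \mapsto 2B_l - x_1$ on the two-slab sandwich $\{x_1 \in [B_l - 1/k^m, B_l + 1/k^m]\}$: using $\Gamma_{[1]}$ to reflect internally within each $m$-cell and exploiting the translation invariance of the wall in $x_1$, $\sigma$ induces a bijection $\sigma_*$ on the sandwich's cells that preserves both $p_1^{(m,n)}$ and $\pi_{m,n}$. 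Let $\tilde v_l := \sigma_*(v_l)$, which lies in the opposite slab and also touches $B_l$. Because both $v_l$ and $\tilde v_l$ lie in $(\partial W_1)^{m+n}$ (forced by $w \in \tilde F_{n,1,s}$) and share the full face $\Psi_{v_l}F_{1,0} = \Psi_{\tilde v_l}F_{1,1}$, Case~1 of Definition~\ref{def210} gives $v_l \stackrel{m+n}{\sim} \tilde v_l$, and the explicit form of (\ref{eqn51}) together with the bounded-degree estimates yields $\delta_0 := p_1^{(m,n)}(v_l, \tilde v_l) \geq c(K) > 0$. Writing $p := \mathbb P_{v_l}(X^{(m,n)}_{\mathcal T_{l+1}} \text{ touches } B_l + 1/k^m)$, $\sigma$-symmetry gives $\mathbb P_{\tilde v_l}(X^{(m,n)}_{\mathcal T_{l+1}} \text{ touches } B_l - 1/k^m) = p$; since $\tilde v_l$ itself touches neither $B_l - 1/k^m$ nor $B_l + 1/k^m$, the strong Markov property at time $1$ forces $1-p \geq \delta_0 p$, hence $1-p \geq \delta_0/(1+\delta_0) =: \delta$. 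The edge case $B_l=1$ is strictly easier, as no upper slab exists.

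Finally, chaining via the strong Markov property at the stopping times $\mathcal T_l$ and noting that $B_1 \leq 1$ means reaching $B=0$ demands at most $k^m$ successive downward transitions (each of probability at least $\delta$), the joint probability is bounded below by $\delta^{k^m} \geq C^{k^m+1}$ for $C := \delta$, and on that event $\tau^{(m,n)}_{\tilde F_{m+n}} \leq \mathcal T_{k^m+1}$. The hard part will be the reflection-symmetry step: one has to verify carefully that $\sigma_*$ is a genuine weighted-graph automorphism of the sandwich and that (\ref{eqn51}) is arranged so that the asymmetry between intra- and inter-$m$-cell transitions gets absorbed symmetrically across $B_l$ — both ultimately resting on the grid-alignment of $\tilde F_{m,2,0}$ in the $x_1$-direction, which is a consequence of the face-included condition.
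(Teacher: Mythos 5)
Your proposal is correct and follows essentially the same route as the paper: you decompose $\tilde L_{m,n}$ into the horizontal layers touching $H_{1,j/k^m}$, prove a per-oscillation downward-transition probability $\geq\delta$ by reflecting about $H_{1,j/k^m}$ and using the explicit form of $p_1^{(m,n)}$, and then chain via the strong Markov property. The paper's proof is identical in structure (defining $A_j$, observing $\tilde\Theta_{m,n}^{-1}(\tilde F_n)=\bigcup A_{2j}$ etc., and invoking "the same argument as the proof of (6.4)" for the per-step bound with $C=\alpha/(1+\alpha)$); your write-up simply spells out that reflection step explicitly.
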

\begin{proof}
This is an easy consequence of symmetry. For $l=0,1,\cdots,k^m$, we define
\[A_l=\big\{\kappa\in \tilde{L}_{m,n}:\Psi_{\kappa}\square\cap H_{1,l/k^m}\neq \emptyset\big\},\]
which can be viewed as horizontal layers of $(m+n)$-words in $\tilde L_{m,n}$ spaced about $k^{-m}$ apart. In particular, $A_0=\tilde{F}_{m+n}\cap \tilde{L}_{m,n}$ and $A_{k^m}=\tilde{F}_{m+n,1,1}\cap \tilde{L}_{m,n}$. Moreover, one can see that $\tilde{\Theta}_{m,n}^{-1}(\tilde{F}_n)=\bigcup_{j=0}^{\lfloor k^m/2\rfloor}A_{2j}$ and $\tilde{\Theta}_{m,n}^{-1}(\tilde{F}_{n,1,1})=\bigcup_{j=1}^{\lceil k^m/2\rceil}A_{2j-1}$. By a same argument as the proof of (\ref{eqn64}), we can see that
\[
\begin{aligned}
\mathbb{P}_{w'\cdot w}^{(m,n)}\big(X^{(m,n)}_{\tau^{(n)}_{\tilde{F}_n}}\in A_{j-1}\big)\geq C,\qquad &\forall\text{ odd }j,\ w'\cdot w\in A_j,\\
\mathbb{P}_{w'\cdot w}^{(m,n)}\big(X^{(m,n)}_{\tau^{(n)}_{\tilde{F}_{n,1,1}}}\in A_{j-1}\big)\geq C,\qquad &\forall\text{ nonzero even }j,\ w'\cdot w\in A_j,
\end{aligned}
\]
where we can take $C=\frac{\alpha}{1+\alpha}$ with $\alpha=\frac 13\cdot \frac {1}{2\cdot 9}=\frac{1}{54}$ (recall (\ref{eqn51})). The lemma then follows from the strong Markov property.
\end{proof}

\begin{corollary}\label{coro610}
	For each $m\geq 0$, there exists a bounded increasing function $\vartheta_m:\mathbb{R}_+\to \mathbb{R}_+$ such that $\vartheta_m(0)=0$, $\vartheta_m$ is continuous at $0$ and
	\[\mbEmn_{\nu}(\tau^{(m,n)}_{\tilde{F}_{m+n}})\leq \vartheta_m\big(\frac{\mbEn_{\tilde{\Theta}_{m,n}^*\nu}(\mathcal{T}_1)}{\lambda_n}\big)\cdot\lambda_n\]
	for any probability measure $\nu$ on $\tilde{L}_{m,n}$ such that $\tilde{\Theta}_{m,n}^*\nu$ is supported on $I_{n,+}$.
\end{corollary}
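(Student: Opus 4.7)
The idea is to combine Lemma~\ref{lemma69} (a block of $k^m+1$ oscillations hits $\tilde{F}_{m+n}$ with probability $\geq c_0:=C^{k^m+1}$) with an iteration of Proposition~\ref{prop62} (each subsequent oscillation is short whenever the first one is), and then optimize over the number of such blocks. Write $s:=\mbEn_{\tilde\Theta_{m,n}^*\nu}(\mathcal{T}_1)/\lambda_n$.

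First I would establish a reflected version of Proposition~\ref{prop62} valid for $\mu$ supported on $I_{n,-}$: the expected time from first hitting $\tilde F_{n,1,1}$ to the subsequent hit of $\tilde F_n$ is bounded by $\vartheta\big(\mbEn_\mu(\tau^{(n)}_{\tilde F_{n,1,1}})/(C\lambda_n)\big)C\lambda_n$. This is immediate by the reflection $\Gamma_{[1]}$ across $H_{1,1/2}$, which swaps $\tilde F_n\leftrightarrow \tilde F_{n,1,1}$ and $I_{n,+}\leftrightarrow I_{n,-}$. Since $X^{(n)}_{\mathcal{T}_{2k}}\in \tilde F_{n,1,1}\subset I_{n,+}$ and $X^{(n)}_{\mathcal{T}_{2k+1}}\in \tilde F_n\subset I_{n,-}$, applying whichever of the two versions fits at each $\mathcal{T}_j$ via the strong Markov property, together with Jensen's inequality (valid because $\vartheta$, being an infimum of affine functions, is concave), yields the iterative bound
\[
\mbEn_{\tilde\Theta_{m,n}^*\nu}(\mathcal{T}_{j+1}-\mathcal{T}_j)\leq a_{j+1}(s)\,\lambda_n,\quad a_1(s):=s,\ a_{j+1}(s):=C\vartheta\!\big(a_j(s)/C\big),
\]
where each $a_j$ is continuous at $0$ with $a_j(0)=0$. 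Summing, $\mbEn_{\tilde\Theta_{m,n}^*\nu}(\mathcal{T}_K)\leq \lambda_n\sum_{j=1}^K a_j(s)$.

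Next, iterating Lemma~\ref{lemma69} in $\lfloor K/(k^m+1)\rfloor$ successive windows via the strong Markov property gives $\mbPmn_\nu\big(\tau^{(m,n)}_{\tilde F_{m+n}}>\mathcal{T}_K\big)\leq (1-c_0)^{\lfloor K/(k^m+1)\rfloor}$. A standard renewal argument based on the same $c_0$ combined with $\mbEn_w(\mathcal{T}_1)\leq C\lambda_n$ from Proposition~\ref{prop66} also supplies a crude uniform bound $\sup_{w'\cdot w\in \tilde L_{m,n}}\mbEmn_{w'\cdot w}\big(\tau^{(m,n)}_{\tilde F_{m+n}}\big)\leq M_m\lambda_n$ for some constant $M_m$ depending on $m$ but not on $n$. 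Using the coupling of Lemma~\ref{lemma53} (so $\mbEmn_\nu(\mathcal{T}_K)=\mbEn_{\tilde\Theta_{m,n}^*\nu}(\mathcal{T}_K)$) and splitting the expectation at $\mathcal{T}_K$,
\[
\mbEmn_\nu\big(\tau^{(m,n)}_{\tilde F_{m+n}}\big)\leq 2\lambda_n\sum_{j=1}^K a_j(s)+M_m\lambda_n(1-c_0)^{\lfloor K/(k^m+1)\rfloor},
\]
so defining $\vartheta_m(s):=\inf_{K\geq 1}\big\{2\sum_{j=1}^K a_j(s)+M_m(1-c_0)^{\lfloor K/(k^m+1)\rfloor}\big\}$ produces a bounded, increasing function with $\vartheta_m(0)=0$.

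The main obstacle is proving continuity of $\vartheta_m$ at $0$. Because $\vartheta(t)\sim t|\log t|$ near $0$, the iterates $a_j$ actually grow with $j$, roughly like $s|\log s|^{j-1}$, so $K$ cannot be chosen too large; on the other hand, $K$ must tend to infinity in order to kill the geometric tail. A careful balance is needed: choosing for instance $K(s)=\lfloor \log\log(1/s)\rfloor$, a direct estimate shows both $\sum_{j=1}^{K(s)}a_j(s)\to 0$ and $(1-c_0)^{K(s)/(k^m+1)}\to 0$ as $s\to 0$, which delivers the required continuity at $0$.
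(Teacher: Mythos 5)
Your argument is correct and follows essentially the same route as the paper: both combine Lemma~\ref{lemma69} (a block of $k^m+1$ oscillations hits $\tilde{F}_{m+n}$ with a uniformly positive probability $c_0$) with an iteration of Proposition~\ref{prop62} across the stopping times $\mathcal{T}_j$, where the reflection across $H_{1,1/2}$ and Jensen's inequality (available because $\vartheta$ is an infimum of affine functions, hence concave) are exactly the ingredients implicit in the paper's step $\mathbb{E}(\mathcal{T}_{l(k^m+1)})\leq\sum_{i=0}^{l(k^m+1)-1}\vartheta^{\circ i}(\cdot)C_2\lambda_n$. The paper organizes the final bound as an infinite series with geometrically decaying coefficients rather than truncating at a level $K$ and optimizing, but this is only cosmetic.

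The one place you make things harder than they are is the continuity of $\vartheta_m$ at $0$, which you present as requiring a delicate choice $K(s)=\lfloor\log\log(1/s)\rfloor$. This is unnecessary: since $\vartheta_m$ is an infimum over $K$, and for each fixed $K$ the finitely many functions $a_1,\dots,a_K$ are continuous at $0$ and vanish there (by induction, $a_{j+1}=C\vartheta(a_j/C)$ with $\vartheta(0)=0$ and $\vartheta$ continuous at $0$), one gets $\limsup_{s\to 0}\vartheta_m(s)\leq M_m(1-c_0)^{\lfloor K/(k^m+1)\rfloor}$ for every $K$, and then sends $K\to\infty$. The paper does the analogous thing via dominated convergence on its series, using that each $\vartheta^{\circ i}$ takes values in $[0,1]$ while the coefficient of $\vartheta^{\circ i}$ decays like $C_1^{i/(k^m+1)}$. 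Your $\log\log$ balance is not wrong, merely superfluous; also, Proposition~\ref{prop66} already confines $s=\mathbb{E}^{(n)}_{\tilde\Theta^*_{m,n}\nu}(\mathcal{T}_1)/\lambda_n$ to a bounded range, which is what makes ``bounded'' in the statement harmless for your infimum definition.
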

\begin{proof}
	By Lemma \ref{lemma69}, we know that for some $C_1\in (0,1)$ depending only on $m$,
	\[
	\mbPmn_\nu\big(\tau^{(m,n)}_{\tilde{F}_{m,n}}\geq \mathcal{T}_{l\cdot (k^{m}+1)}\big)\leq C_1^l,\quad \forall l\geq 1.
	\]
	Hence, by letting $C_2$ be the constant in Proposition \ref{prop62}, we have
	\[
	\begin{aligned}
		\mbEmn_{\nu}(\tau^{(m,n)}_{\tilde{F}_{m+n}})&\leq \sum_{l=1}^\infty C_{1}^{l-1}\cdot \mbEn_{\tilde{\Theta}_{m,n}^*\nu}(\mathcal{T}_{l\cdot (k^{m}+1)})\\
		&\leq \sum_{l=1}^\infty C_1^{l-1}\sum_{i=0}^{l(k^m+1)-1}\vartheta^{\circ i}\big(\mbEn_{\tilde{\Theta}_{m,n}^*\nu}(\frac{\mathcal{T}_1}{C_2\lambda_n})\big)\cdot C_2\lambda_n\\
		&\leq\Big(\sum_{i=0}^\infty C_2\cdot (\sum_{l=\lceil\frac{i+1}{k^m+1}\rceil}^{\infty}C_1^{l-1})\cdot\vartheta^{\circ i}\big(\mbEn_{\tilde{\Theta}_{m,n}^*\nu}(\frac{\mathcal{T}_1}{C_2\lambda_n})\big)\Big)\cdot\lambda_n.
	\end{aligned}
	\]
	The corollary follows, noticing that $\vartheta:[0,1]\to [0,1]$ and $\vartheta$ is continuous at $0$.
\end{proof}

\section{Lower bound estimate of the mean hitting time}\label{sec7}
In this section, combining the results of Sections \ref{sec4}--\ref{sec6}, we prove the following lower bound estimate of the mean hitting time $\mbEn_w(\mathcal{T}_1),\forall w\in \tilde F_{n,1,1}$, in contrast to Proposition \ref{prop66}.

\begin{theorem}\label{thm71}
There is a constant $C>0$ depending only on $K$ such that $\mbEn_w(\mathcal{T}_1)\geq C\cdot\lambda_n$ for any $w\in \tilde{F}_{n,1,1}$.
\end{theorem}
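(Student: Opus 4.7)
The proof goes by contradiction. Suppose $\mbEn_{w_0}(\mathcal{T}_1)\le \varepsilon \lambda_n$ for some $w_0\in \tilde F_{n,1,1}$ with $\varepsilon>0$ arbitrarily small. Since $w_0\in \tilde F_{n,1,1}\subset I_{n,+}$, Proposition~\ref{prop62} applies with $\nu=\delta_{w_0}$, and iterating via the strong Markov property yields that the walker completes many oscillations between $\tilde F_n$ and $\tilde F_{n,1,1}$ in time $o(\lambda_n)$ as $\varepsilon\to 0$; more precisely, $\mbEn_{w_0}(\mathcal{T}_{2L})\le L\cdot \vartheta(\varepsilon/C)\, C\lambda_n$ for each fixed $L$. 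Now fix an integer $m\ge 1$ to be chosen, and construct a probability measure $\nu^*$ on $\tilde L_{m,n}$ supported in the top $x_1$-layer with $\tilde\Theta^{*}_{m,n}\nu^*=\delta_{w_0}$. Corollary~\ref{coro610} then gives
\[
\mbEmn_{\nu^*}\bigl(\taumn_{\tilde F_{m+n}}\bigr)\le \vartheta_m(\varepsilon)\lambda_n,
\]
which is $o(\lambda_n)$ as $\varepsilon\to 0$.

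The core of the proof is a matching lower bound $\mbEmn_{\nu^*}(\taumn_{\tilde F_{m+n}})\gtrsim c_m\lambda_n$ with $c_m>0$ independent of $\varepsilon$. Combining Proposition~\ref{prop45} with Lemma~\ref{lemma40} produces $f\in l(W_n)$ with $\fint_{\tilde F_n}f\, d\mu_n=0$, $\fint_{\tilde F_{n,1,1}}f\, d\mu_n=1$, and $\mcD_n(f)\lesssim N^n/\lambda_n$. Lifting $f$ through $\tilde\Theta_{m,n}$ layer-by-layer in the $x_1$-direction, with the lift in the $j$-th layer shifted by $j/k^m$, produces a test function $\Phi$ on $\tilde L_{m,n}$ whose averages $\fint_{\tilde F_{m+n}\cap\tilde L_{m,n}}\Phi$ and $\fint_{\mathrm{supp}(\nu^*)}\Phi$ differ by a definite constant, with $\mcD_{m+n,\tilde L_{m,n}}(\Phi)$ bounded by the sum of the layer-interior energies inherited from $f$ and the cross-layer interpolation contributions of size $(1/k^m)^2\mcD_n(f)$ per layer. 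The variational characterization of effective resistance combined with the hitting-time/resistance equivalence for reversible Markov chains ($\mbEmn_{\nu}(\tau_A)\asymp \pi_{m,n}(\tilde L_{m,n})\cdot R(\nu,A)$) then yields the desired lower bound.

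Choosing $\varepsilon$ small enough that $\vartheta_m(\varepsilon)<c_m$ for a fixed $m\ge 1$ contradicts the earlier upper bound, finishing the proof. The main obstacle is the construction of $\Phi$ and the sharpness of its energy estimate: since Proposition~\ref{prop45} only yields averaged (rather than pointwise) face-to-face control, Corollary~\ref{coro44} together with the measures from Example~\ref{example42}(b) (also the ingredient of Proposition~\ref{prop67}) is needed to pass from the $\mu_n$-average identities on $\tilde F_n$ and $\tilde F_{n,1,1}$ to the approximate control of $\Phi$ on $\tilde F_{m+n}\cap\tilde L_{m,n}$ and $\mathrm{supp}(\nu^*)$, and the cross-layer scaling must be kept at the $1/k^m$ rate throughout in order that the lower bound on hitting time survive the final contradiction step.
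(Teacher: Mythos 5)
The proposal has a genuine gap at its core step: the claimed "hitting-time/resistance equivalence" $\mbEmn_{\nu}(\tau_A)\asymp \pi_{m,n}(\tilde L_{m,n})\cdot R(\nu,A)$ is not a two-sided bound. For a reversible chain the Green-function identity gives the upper bound $\mbEmn_{x}(\tau_A)\leq \pi_{m,n}(\tilde L_{m,n})\,R(\{x\},A)$, but in the other direction one only has $\mbEmn_x(\tau_A)\geq \pi_{m,n}(x)R(\{x\},A)$, which is smaller by a factor of order $\pi_{m,n}(\tilde L_{m,n})/\pi_{m,n}(x)\asymp k^{2m}N^n$. So even if you construct a good test function $\Phi$ and obtain a lower bound on the effective resistance between $\mathrm{supp}(\nu^*)$ and $\tilde F_{m+n}$, this does not produce the pointwise (or $\nu^*$-specific) lower bound $\mbEmn_{\nu^*}(\taumn_{\tilde F_{m+n}})\gtrsim c_m\lambda_n$ that your contradiction requires. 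The paper circumvents exactly this difficulty by not arguing from a single starting distribution: it first shows (Claims 1--2) that $\sup_{w'\cdot w}\mbEmn_{w'\cdot w}(\taumn_{\tilde F_{m+n}})\leq (C_1+\vartheta_m(t))\lambda_n$ --- uniformly over \emph{all} starting points --- and then uses the identity $\mcD_{m,n}(\phimn)=\int\phimn\,d\pi_{m,n}$ together with $\fint\phimn\,d\pi_{m,n}\leq\|\phimn\|_\infty$ and the variational formula \eqref{eqn71} to convert that sup bound into an upper bound on a resistance-type quantity $\bar{\mathscr R}_{m+n}$, which is then played against Proposition~\ref{prop45}'s lower bound.

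Two further ingredients are missing from your sketch that are essential to get the sup bound. First, your argument only controls the hitting time from $w_0$ (the single point where $\phin$ is small), but Corollary~\ref{coro610} needs the projected initial law to be supported in $I_{n,+}$ \emph{at points where $\phin$ is already small}; the paper's Claim~1 uses superharmonicity of $\phin$ and the minimum principle to produce a path $A$ from $\tilde F_{n,1,1}$ all the way to $I_n$ along which $\phin\le t\lambda_n$. Second, to get from an arbitrary starting point of the piled-up walk to that good set $A$ one needs Proposition~\ref{prop67} (hitting $A$ in expected time $\lesssim\lambda_n$), which is absent from your outline. Without these two steps there is no route from your upper bound (which is only from $\nu^*$) and your intended resistance lower bound (which controls only an averaged quantity, not the mean hitting time from $\nu^*$) to a contradiction.
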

\begin{proof} For convenience, we let $\phin\in l(W_n)$ be defined as $\phin(w):=\mbEn_w(\mathcal{T}_1),\forall w\in W_n$. In the following, we fix $m,n\geq 1$, and write
	\[t=\min\big\{\frac{\phin(w)}{\lambda_n}:w\in \tilde F_{n,1,1}\big\}.\]\vspace{0.2cm}
	
	\noindent\textit{Claim 1. There exists a path $w^{(0)},w^{(1)},\cdots,w^{(L)}$ such that $w^{(0)}\in \tilde{F}_{n,1,1}$, $w^{(L)}\in I_n$, and  $w^{(l)}\in I_{n,+}$,  $\phin(w^{(l)})\leq t\lambda_n,\forall 0\leq l\leq L$. }
	\begin{proof}[Proof of Claim 1]
	 The claim is inspired by \cite{B1}. First, by the definition of $t$, we can find $w^{(0)}\in \tilde{F}_{n,1,1}$ such that $\phin(w^{(0)})=t\lambda_n$. Next, since $\phin$ is superharmonic in $W_n\setminus \tilde{F}_n$, by the minimal principle,  we can find a path $w=w^{(0)},w^{(1)},\cdots,w^{(L')}$ such that $w^{(L')}\in \tilde{F}_n$, $\phin(w^{(l)})\leq t\lambda_n$ for any $0\leq l\leq L'$. Choose $L$ such that $w^{(L)}$ is the first element of the path satisfying $w^{(L)}\in I_n$.
	\end{proof}
	
	\noindent\textit{Claim 2. There exists a constant $C_1>0$ depending only on $K$ such that for each $w'\cdot w\in \tilde{L}_{m,n}$ with $w'\in \tilde F_{m,2,0}$, $w\in W_n$,
		$\mbEmn_{w'\cdot w}(\tau^{(m,n)}_{\tilde{F}_{m+n}})\leq \big(C_1+\vartheta_m(t)\big)\lambda_n$.}
	
	\begin{proof}[Proof of Claim 2]
		We let $A=\{w^{(0)},w^{(1)},\cdots,w^{(L)}\}$ as introduced in Claim 1, and let $C_1$ be the constant in Proposition \ref{prop67}. Recall that we have coupled $X^{(m,n)}$ and $X^{(n)}$ so that $\tilde{\Theta}_{m,n}(X^{(m,n)}_i)=X^{(n)}_i$. Then,
		\[
		\begin{aligned}
			\mbEmn_{w'\cdot w}\big(\tau^{(m,n)}_{\tilde{F}_{m+n}}\big)&=\mbEmn_{w'\cdot w}\big(\tau^{(n)}_A\wedge \tau^{(m,n)}_{\tilde{F}_{m+n}}\big)+\mbEmn_{w'\cdot w}\big(\tau^{(m,n)}_{\tilde{F}_{m+n}}-\tau^{(n)}_A; \tau^{(m,n)}_{\tilde{F}_{m+n}}>\tau^{(n)}_A\big)\\
			&\leq C_1\lambda_n+\mbEmn_{w'\cdot w}\big(\mbEmn_{X^{(m,n)}_{\tau^{(n)}_A}}( \tau^{(m,n)}_{\tilde{F}_{m+n}})\big)\\
			&\leq C_1\lambda_n+\vartheta_m(t)\lambda_n,
		\end{aligned}
		\]
		where we use Proposition \ref{prop67} and the strong Markov property in the second line, and we use Corollary \ref{coro610} in the last line.
	\end{proof}

	\noindent\textit{Claim 3. Define
		\[
		\bar{\mathscr{R}}_{m+n}:=\sup\big\{\frac{(\fint_{\tilde{F}_{m+n, 2, 0}}fd\mu_{m+n})^2}{\mcD_{m+n}(f)}:f\in l(W_{m+n}),f|_{\tilde{F}_{m+n}}\equiv 0\big\}.
		\]
		Then there exist $C_2,C_3>0$ depending only on $K$ such that }
	\[
	\bar{\mathscr{R}}_{m+n}\leq 2\big(C_2C_1+C_2\vartheta_m(t)+C_3\big)N^{-n}\lambda_n\cdot k^{-2m}.
	\]
	\begin{proof}[Proof of Claim 3]
		We let $\phi^{(m,n)}\in\l(\tilde L_{m,n})$ be defined by $\phi^{(m,n)}(w'\cdot w)=\mbEmn_{w'\cdot w}(\tau^{(m,n)}_{\tilde{F}_{m+n}})$ for $w'\in \tilde F_{m,2,0}$, $w\in W_n$, and denote $\mathcal{D}_{m,n}(f,g)=\langle f-p^{(m,n)}_1f,g\rangle_{l^2(\tilde L_{m,n}, \pi_{m,n})}$ for $f,g\in  l(\tilde L_{m,n})$. We can see that
		\[
		\mathcal{D}_{m,n}(\phi^{(m,n)},g)=\langle 1,g\rangle_{l^2(\tilde L_{m,n}, \pi_{m,n})}
		\]
		for all $g\in l(\tilde{L}_{m,n})$ satisfying $g|_{\tilde L_{m,n}\cap\tilde{F}_{m+n}}\equiv 0$. It follows that
		\begin{equation}\label{eqn71}
			\frac{(\fint_{\tilde{L}_{m,n}} \phi^{(m,n)}d\pi_{m,n})^2}{\mcD_{m,n}(\phi^{(m,n)},\phi^{(m,n)})}=\sup\big\{\frac{(\fint_{\tilde{L}_{m,n}}fd\pi_{m,n})^2}{\mcD_{m,n}(f,f)}:f\in l(\tilde{L}_{m,n}),f|_{\tilde L_{m,n}\cap \tilde{F}_{m+n}}\equiv 0\big\}.
		\end{equation}
		On the other hand, we also have
		\begin{equation}\label{eqn72}
			\begin{aligned}
				\frac{(\fint_{\tilde{L}_{m,n}} \phi^{(m,n)}d\pi_{m,n})^2}{\mcD_{m,n}(\phi^{(m,n)},\phi^{(m,n)})}&=\frac{(\fint_{\tilde{L}_{m,n}} \phi^{(m,n)}d\pi_{m,n})^2}{\int_{\tilde{L}_{m,n}} \phi^{(m,n)}d\pi_{m,n}}\leq \big(\pi_{m,n}(\tilde{L}_{m,n})\big)^{-1}\|\phi^{(m,n)}\|_{l^\infty(\tilde L_{m,n},\pi_{m,n})}.
			\end{aligned}
		\end{equation}
	   Combining (\ref{eqn71}) and (\ref{eqn72}), we see that for some constant $C_2>0$ depending only on $K$,
		\begin{equation}\label{eqn73}
			\begin{aligned}
				&\sup\big\{\frac{(\fint_{\tilde{L}_{m,n}}fd\mu_{m+n})^2}{\mcD_{m+n}(f)}:f\in l(W_{m+n}),f|_{\tilde{F}_{m+n}}\equiv 0\big\}\\
				\leq&\sup\big\{\frac{(\fint_{\tilde{L}_{m,n}}fd\mu_{m+n})^2}{\mcD_{m+n,\tilde{L}_{m,n}}(f)}:f\in l(\tilde{L}_{m,n}),f|_{\tilde{F}_{m+n}}\equiv 0\big\}\\
				\leq&C_2\big(C_1+\vartheta_m(t)\big)N^{-n}\lambda_n\cdot k^{-2m}.
			\end{aligned}
		\end{equation}
	   In fact, the first inequality of (\ref{eqn73}) is trivial as $\mcD_{m+n}(f)\geq \mcD_{m+n,\tilde{L}_{m+n}}(f),\forall f\in l(W_{m+n})$; the second inequality of (\ref{eqn73}) is due to (\ref{eqn71}), (\ref{eqn72}), the fact that $\pi_{m,n}(\tilde{L}_{m,n})\asymp k^{2m}N^n$, and the fact that $\|\phi^{(m,n)}\|_{l^\infty(\tilde{L}_{m,n},\pi_{m,n})}\leq \big(C_1+\vartheta_m(t)\big)\lambda_n$ by Claim 2. Next, we apply Proposition \ref{prop41} locally to see		
		\[\begin{aligned}
		\big|\fint_{\tilde{L}_{m,n}}fd\mu_{m+n}-\fint_{\tilde{F}_{m+n,2,0}}fd\mu_{m+n}\big|&\leq\frac{1}{k^{2m}} \sum_{w'\in\tilde F_{m,2,0}}\big|\fint_{w'\cdot W_n}fd\mu_{m+n}-\fint_{w'\cdot \tilde F_{n,2,0}}fd\mu_{m+n}\big|\\
		&\leq \sqrt{C_3N^{-n}\lambda_n\cdot k^{-2m}\cdot \mcD_{m+n}(f)},\quad\forall f\in l(W_{m+n}),
		\end{aligned}
		\]
		where $C_3>0$ is a constant depending only on $K$. Combining the above estimate with (\ref{eqn73}), we get the claim.
	\end{proof}

	\noindent\textit{Claim 4. $\mathscr{R}_{m+n}\leq 2\cdot\bar{\mathscr{R}}_{m+n}$.}
	\begin{proof}[Proof of Claim 4]
		We pick $f\in l(W_{m+n})$ such that
		\[\mathscr{R}_{m+n}=\frac{\big|\fint_{\tilde{F}_{m+n,2,0}}fd\mu_{m+n}-\fint_{\tilde{F}_{m+n,1,0}}fd\mu_{m+n}\big|^2}{\mcD_{m+n}(f)},\]
		and we can assume that $f$ is anti-symmetric with respect to the reflection symmetry that interchanges $\tilde{F}_{m+n,2,0}$ and $\tilde{F}_{m+n,1,0}$. More precisely, we define $\tilde{\Gamma}_{[1,2]}:W_{m+n}\to W_{m+n}$ with the relation $\Gamma_{[1,2]}(\Psi_w K)=\Psi_{\tilde{\Gamma}_{[1,2]}(w)}K$, where $\Gamma_{[1,2]}$ was defined at the beginning of Section \ref{sec2}. Then we can require $f=-f\circ \tilde{\Gamma}_{[1,2]}$ (since otherwise we can take $\frac{f-f\circ\tilde{\Gamma}_{[1,2]}}{2}$ as the function).
		
		Let $A'=\big\{w\in W_{m+n}:d(\Psi_w K,F_{2,0})\leq d(\Psi_wK,F_{1,0})\big\}$ and define $g\in l(W_{m+n})$ as
		\[g(w)=\begin{cases}
			f(w),&\text{ if }w\in A',\\
			0,&\text{ if }w\in W_{m+n}\setminus A'.
		\end{cases}\]
		Then $g|_{\tilde{F}_{m+n,1,0}}\equiv 0$, $g|_{\tilde{F}_{m+n,2,0}}=f|_{\tilde{F}_{m+n,2,0}}$ and $\mcD_{m+n}(g)\leq \frac{1}{2}\mcD_{m+n}(f)$ by the anti-symmetry of $f$. Hence,
		\[\bar{\mathscr{R}}_{m+n}\geq \frac{\big|\fint_{\tilde{F}_{m+n,2,0}}gd\mu_{m+n}\big|^2}{\mcD_{m+n}(g)}\geq \frac{1}{4}\cdot 2\cdot \mathscr{R}_{m+n}.\]
	\end{proof}
	
	Finally, by Claims 3 and 4, we see that for some $C_4,C_5>0$ depending only on $K$,
	\[
	\begin{aligned}
		\mathscr{R}_{m+n}&\leq 4(C_2C_1+C_2\vartheta_m(t)+C_3) N^{-n}\lambda_n\cdot k^{-2m}\\
		&\leq \big(C_4+C_5\vartheta_m(t)\big)\cdot N^{-m-n}\lambda_{m+n}\cdot (N^mk^{-4m}),
	\end{aligned}
	\]
	where we use Proposition \ref{prop32} in the second line. On the other hand, by Propositions \ref{prop45}, we also have $\mathscr{R}_{m+n}\geq C_6N^{-m-n}\lambda_{m+n}$ for some $C_6>0$ depending only on $K$.  Hence,
	\[C_4+C_5\vartheta_m(t)\geq C_6k^{4m}N^{-m}.\]
	We fix $m=m_0$, which depends only on $K$ (since $C_4,C_6$ depends only on $K$), such that  $2C_4\leq C_6k^{4m_0}N^{-m_0}$. Then,
	\[\vartheta_{m_0}(t)\geq C_4C^{-1}_5.\]
	This implies that $t\geq C_7$ for some $C_7>0$ depending only on $K$, since $\vartheta_{m_0}(0)=0$ and $\vartheta_{m_0}$ is continuous at $0$. The theorem then follows immediately from the definition of $t$.
\end{proof}

As a consequence of Theorem \ref{thm71}, we get the following face-to-face resistance estimate.

\begin{corollary}\label{coro72}
	Denote  the effective resistance between $\tilde{F}_{n,1,1}$ and  $\tilde{F}_n:=\tilde{F}_{n,1,0}$ by
	\[R_{n, F}:=\big(\inf\{\mcD_n(f):f|_{\tilde{F}_{n,1,1}}\equiv 1,f|_{\tilde{F}_{n}}\equiv 0\}\big)^{-1}.\]
    Then, there is a constant $C>0$ independent of $n$ such that
	\[R_{n,F}\geq C\cdot N^{-n}\lambda_n.\]
\end{corollary}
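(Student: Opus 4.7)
The strategy is to convert the hitting time lower bound of Theorem \ref{thm71} into a resistance lower bound via a standard potential-theoretic argument. Let $\phi^{(n)} \in l(W_n)$ be the mean hitting time $\phi^{(n)}(w) := \mathbb{E}^{(n)}_w(\mathcal{T}_1) = \mathbb{E}^{(n)}_w(\tau^{(n)}_{\tilde{F}_n})$, as used in the proof of Theorem \ref{thm71}. By a one-step analysis, $\phi^{(n)}$ solves the Poisson-type equation
\[
(\phi^{(n)} - p^{(n)}_1 \phi^{(n)})(w) = \mathbf{1}_{W_n \setminus \tilde{F}_n}(w), \qquad \phi^{(n)}|_{\tilde{F}_n} \equiv 0.
\]

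First I compute $\mathcal{D}_n(\phi^{(n)})$ using the identity from Lemma \ref{lemma54}. Since $\phi^{(n)}$ vanishes on $\tilde{F}_n$,
\[
\mathcal{D}_n(\phi^{(n)}) = \langle \phi^{(n)} - p^{(n)}_1 \phi^{(n)}, \phi^{(n)} \rangle_{l^2(W_n,\pi_n)} = \sum_{w \in W_n \setminus \tilde{F}_n} \phi^{(n)}(w)\, \pi_n(w).
\]
Noting that $\pi_n(W_n) \leq C_0 N^n$ for a constant $C_0$ depending only on $K$ (since each cell has boundedly many neighbors), and applying the upper bound $\|\phi^{(n)}\|_{l^\infty} \leq C_1 \lambda_n$ from Proposition \ref{prop66}, I obtain
\[
\mathcal{D}_n(\phi^{(n)}) \leq C_0 C_1 \, \lambda_n N^n.
\]

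Next, I construct an admissible test function for $R_{n,F}$. Let $m := \min_{w \in \tilde{F}_{n,1,1}} \phi^{(n)}(w)$. By Theorem \ref{thm71}, $m \geq C_2 \lambda_n$ for some constant $C_2>0$. Define
\[
g := \bigl( \phi^{(n)}/m \bigr) \wedge 1 \in l(W_n).
\]
Then $g|_{\tilde{F}_n} \equiv 0$ and $g|_{\tilde{F}_{n,1,1}} \equiv 1$, so $g$ is admissible in the definition of $R_{n,F}$. Since truncation at a constant does not increase the graph energy form $\mathcal{D}_n$, and scaling is quadratic, we have
\[
\mathcal{D}_n(g) \leq \mathcal{D}_n(\phi^{(n)}/m) = m^{-2}\, \mathcal{D}_n(\phi^{(n)}) \leq \frac{C_0 C_1 \lambda_n N^n}{(C_2 \lambda_n)^2} = \frac{C_0 C_1}{C_2^2}\cdot \frac{N^n}{\lambda_n}.
\]
Taking reciprocals yields $R_{n,F} \geq C \cdot N^{-n} \lambda_n$ with $C = C_2^2/(C_0 C_1)$, as desired.

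There is no real obstacle here: the genuine work has already been done in Theorem \ref{thm71} (lower bound on $\phi^{(n)}$ on the opposite face) and Proposition \ref{prop66} (uniform upper bound on $\phi^{(n)}$). The only small points to verify carefully are the self-adjoint identity $\mathcal{D}_n(\phi^{(n)}) = \sum \phi^{(n)} \pi_n$ on $W_n \setminus \tilde{F}_n$ (which needs the boundary self-loops at $\partial W_n$ built into the definition of $p^{(n)}_1$ so that Lemma \ref{lemma54} applies cleanly) and the truncation inequality for graph energies, both of which are routine.
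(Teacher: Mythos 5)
Your proof is correct and essentially the same as the paper's: both define $\phi^{(n)}(w)=\mathbb{E}^{(n)}_w(\mathcal{T}_1)$, compute $\mathcal{D}_n(\phi^{(n)})=\langle\phi^{(n)},1\rangle_{l^2(W_n,\pi_n)}\lesssim N^n\lambda_n$ via Lemma \ref{lemma54} and Proposition \ref{prop66}, truncate a rescaled $\phi^{(n)}$ at $1$ to get an admissible test function (the paper scales by $(C_2\lambda_n)^{-1}$, you by $m^{-1}$ with $m\geq C_2\lambda_n$ from Theorem \ref{thm71}, which is an equivalent normalization), and use that truncation does not increase graph energy. No gaps.
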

\begin{proof}
	Let $\phi^{(n)}\in l(W_n)$ be defined as $\phi^{(n)}(w)=\mbEn_w(\mathcal{T}_1)$ as in the proof of Theorem \ref{thm71}. By Lemma \ref{lemma54} and Propositions \ref{prop66}, we notice that
	\[
	\mathcal{D}_n(\phi^{(n)})=\langle{\phi^{(n)}},1\rangle_{l^2(W_n,\pi_n)}\leq C_1N^n\lambda_n.
	\]
	for some $C_1>0$ depending only on $K$. Let $C_2$ be the constant in Theorem \ref{thm71}, and we let $f=\big((C_2\lambda_n)^{-1}\phi^{(n)}\big)\wedge 1$.  Then clearly $f|_{\tilde{F}_{n,1,1}}\equiv 1,f|_{\tilde{F}_{n}}\equiv 0$, and we have the estimate
	\[
	\mcD_n(f)\leq (C_2\lambda_n)^{-2}\cdot (C_1N^n\lambda_n)=C_1C_2^{-2}N^n\lambda_n^{-1}.
	\]
	This gives the desired lower bound of the resistance $R_{n,F}$.
\end{proof}

\section{A capacity estimate}\label{sec8}
In this section, our main aim is to prove the following lower bound estimate of $R_n$. In some contents, for example \cite{AB,BCM,BM,GHL2,M1,M2}, the reverse of the effective resistance is also called the $0$-capacity.

\begin{theorem}\label{thm81}
  There is $C > 0$ independent of $n$ so that $R_n\geq C\cdot N^{-n} \lambda_n$ for any $n\geq 1$. In addition, there is $\rho>1$ such that $R_n\asymp N^{-n}\lambda_n\asymp N^{-n}\sigma_n\asymp \rho^{-n}$.
\end{theorem}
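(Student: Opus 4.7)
The target splits into two parts: (a) the main inequality $R_n \gtrsim N^{-n}\lambda_n$ (the matching upper bound being already available from (\ref{eqn33}) and Lemma \ref{lemma33}); (b) the geometric decay $R_n \asymp \rho^{-n}$ with $\rho > 1$.

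For part (a), the plan is to bootstrap the face-to-face resistance estimate Corollary \ref{coro72} into a full bump-function estimate for $R_m$. Fix $w \in W_* \setminus \{\emptyset\}$ and $m \geq 1$; the goal is a test function $\phi \in l(W_{|w|+m})$ with $\phi|_{w \cdot W_m}\equiv 1$, $\phi|_{\mathcal{N}_w^c \cdot W_m}\equiv 0$, and $\mathcal{D}_{|w|+m}(\phi) \lesssim N^m/\lambda_m$. For each intermediate cell $v \in \mathcal{N}_w \setminus \{w\}$, one assigns a ``direction of descent'' from $w$ to $\mathcal{N}_w^c$ by fixing a shortest path in the cell graph (of length at most $L_*$) through $v$. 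On $v \cdot W_m$, Corollary \ref{coro72} combined with the reflection symmetries in $\msG$ (to cover all face pairs) supplies a function equal to $1$ on the ``incoming'' face of $v \cdot W_m$, equal to $0$ on the ``outgoing'' face, with energy $\lesssim N^m/\lambda_m$. The bumps on adjacent cells are then glued across shared faces using reflection symmetry and the face-included condition, which guarantees full-face contact between neighboring cells of $\mathcal{N}_w$.

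The main obstacle is the delicate matching of boundary values at shared faces, edges, and corners of cells in $\mathcal{N}_w$, so that the assembled function enjoys the desired energy bound. Since $L_*$ depends only on $K$, the number of cells in $\mathcal{N}_w$ is bounded, and any constant inflation of energy during gluing is absorbed into the final constant. The paper's outline indicates that this step is essentially identical to the $\USC$ construction of \cite{CQ1,CQW}, so that scheme can be transplanted with only cosmetic adaptations to the $\USC^{(3)}$ geometry; the higher-dimensional adjustments are confined to indexing over edge/corner configurations rather than to a substantive new idea.

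For part (b), combining the bound from (a) with the reverse bound gives $R_n \asymp N^{-n}\lambda_n$, and with Lemma \ref{lemma33} also $R_n \asymp N^{-n}\sigma_n$. Substituting $R_m \asymp N^{-m}\lambda_m$ into the two-sided estimate (\ref{eqn31}) produces
\[
\lambda_n \lambda_m \lesssim \lambda_{n+m} \lesssim \lambda_n \lambda_m,
\]
so that $\log(C\lambda_n)$ is sub-additive up to a constant. A standard Fekete-type argument then yields $\lambda_n \asymp \rho_0^n$ for some $\rho_0 > 0$, hence $R_n \asymp (\rho_0/N)^n =: \rho^{-n}$ with $\rho = N/\rho_0$. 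Finally, the strict inequality $\rho > 1$ (equivalently $\rho_0 < N$, i.e.\ $R_n \to 0$) will be read off from an explicit upper bound on $\lambda_n/N^n$: for instance, combining $\lambda_n \gtrsim k^{2n}$ from (\ref{eqn33}) with the definition of $\rho$ already forces $\rho \leq N/k^2$, while the upper half of the Fekete argument together with the geometric constraint $k \geq 3$, $N \geq 8 + 12(k-2) + 6(k-2)^2$ (which makes the $\USC^{(3)}$ nontrivially ramified) delivers the strict inequality $\rho_0 < N$.
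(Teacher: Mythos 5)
Your proposal identifies the two ingredients that are genuinely needed -- (a) a bump function with energy $\lesssim N^m\lambda_m^{-1}$ supported on $\mathcal N_w\cdot W_m$, and (b) a Fekete-type argument plus bounds on $\rho$ -- but both halves contain a gap at exactly the delicate point.

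For (a), the ``direction of descent'' scheme does not work as described. Corollary \ref{coro72} supplies, for each intermediate cell $v\in\mathcal N_w$, a function that is $1$ on the incoming face of $v\cdot W_m$ and $0$ on the outgoing face, with controlled energy; but it says nothing about the values on the \emph{lateral} faces of $v\cdot W_m$. When two such cell functions are glued across a shared lateral face, the discrepancy there is of order $1$ pointwise, and the cross-boundary energy is then of order $k^{2m}$ (the number of edges in a face), \emph{not} a constant multiple of $N^m\lambda_m^{-1}$. Since $\lambda_m\gtrsim k^{2m}$ and $N<k^3$, one has $k^{2m}\gg N^m\lambda_m^{-1}$, so this mismatch term dominates and destroys the desired bound; this is precisely why the argument is not a ``constant inflation.'' The paper's construction (Lemmas \ref{lemma82} and \ref{lemma83}) avoids this by inductively building a function $f_n$ whose boundary values on \emph{all} of $\partial W_n$ agree with the linear function $w\mapsto c_w$, so that the translated copies $\varphi_{w,n,o},\psi_{w,n,o}$ in the proof of Theorem \ref{thm81} automatically match across every shared face, edge and corner with no gluing penalty. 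Your appeal to \cite{CQ1,CQW} is the right citation, but the scheme you describe (one-dimensional ``descent'' plus ad hoc gluing) is not the scheme those papers, or Lemmas \ref{lemma82}--\ref{lemma83}, actually use.

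For (b), the Fekete argument gives $R_n\asymp\rho^{-n}$ for \emph{some} $\rho>0$, and $\lambda_n\gtrsim k^{2n}$ gives $\rho\le N/k^2$, but you do not actually prove $\rho>1$. The appeal to ``the upper half of the Fekete argument together with the geometric constraints'' does not produce the inequality: Fekete only gives existence of the limit, and the numerical constraints on $N,k$ do not by themselves forbid $\rho_0=N$. The paper closes this gap with Lemma \ref{lemma81}, which proves $R_{n,F}\le\alpha^n$ with $\alpha=\frac{k}{4k-4}<1$ by a concrete chain argument (projecting onto parallel boundary tubes), hence $\lambda_n\lesssim N^nR_{n,F}\lesssim(\alpha N)^n$, so $\rho_0\le\alpha N<N$ and $\rho>1$. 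This quantitative step is missing from your proposal.
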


\begin{remark}\label{remark82}
Together with Proposition \ref{prop32} (in particular, (\ref{eqn33})) and Lemma \ref{lemma33}, the estimate  $R_n \geq CN^{-n}\lambda_n$ implies that $N^nR_n\asymp\lambda_n\asymp \sigma_n$, and in addition, there is a \textit{resistance renormalization factor} $\rho>0$ so that $R_n\asymp \rho^{-n}$. Furthermore, \[1<\rho\leq N/k^2,\] where $\rho\leq N/k^2$ follows from (\ref{eqn32}), and  $\rho>1$ will follow from the later Lemma \ref{lemma81}. This will play a key role in constructing a limit (self-similar) form on a $\mathcal{USC}^{(3)}$ in the next two sections.
\end{remark}

Recall that by Kusuoka and Zhou in \cite{KZ}, for a limited range of local symmetric fractals including the classical Sierpinski carpets, a slightly different version of Theorem \ref{thm81} was verified by the ``Knight move'' method due to Barlow and Bass \cite{BB1}. This approach heavily relies on the local symmetry of the fractal, and there are very limited results about side moves and corner moves in the $\mathcal {USC}$ setting due to the more flexible geometry caused by allowing cells to live off the grids. In an earlier work,  for planar $\mathcal {USC}$, the authors \cite{CQ1} developed a purely analytic  ``building brick'' technique (constructing functions with ``linear'' boundary values and controllable energy estimates) which overcomes the difficulty caused by the worse geometry.  In this section, we will see the technique is also applicable in the $\mathcal{USC}^{(3)}$ setting. See also \cite{CQW} for an extension to more flexible planar Sierpinski like fractals which include many irrationally ramified fractals.\vspace{0.2cm}

We will apply Corollary \ref{coro72} to inductively construct a function in $l(W_n)$ which is ``linear'' on each face of $W_n$, whose energy is controlled by a multiple of $N^{n}\lambda_n^{-1}$ from above.\vspace{0.2cm}

For each $o\in \{1,2,3\}$, we denote $\tilde{\Gamma}_{[o]}:W_*\to W_*$ so that $\Gamma_{[o]}(\Psi_wK) = \Psi_{\tilde{\Gamma}_{[o]}(w)}K$ for any $w\in W_*$ and denote $\tilde{\Gamma}_{[o,o']}$ in a similar way. As in the proof of Lemma \ref{lemma28}, for $o\in\{1,2,3\}$, we denote $P_{o}(x) = x_{o}$ for $x = (x_1,x_2,x_3)\in \mathbb{R}^3$, and for each $w\in W_*$ we write \[a_w = P_1\circ\Psi_w(0,0,0), \quad b_w = P_1\circ\Psi_w(1,0,0),\quad c_w=P_1\circ\Psi_w(\frac 12,0,0).\]

\begin{lemma}\label{lemma81}
There are $\alpha\in(0,1)$ and $C>0$ depending only on $K$ such that  
\[
\lambda_{m}\leq C\cdot\alpha^nN^n\cdot\lambda_{m-n},\quad\forall m>n\geq 1.
\]
\end{lemma}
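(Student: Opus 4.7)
By Proposition~\ref{prop32} (equation (\ref{eqn31})) combined with Remark~\ref{remark33} and Lemma~\ref{lemma33}, we have $\lambda_m \leq C\lambda_{m-n}\sigma_n$ and $\sigma_n \asymp \lambda_n$. Hence Lemma~\ref{lemma81} reduces to the absolute growth estimate
\[
\lambda_n \leq C'(\alpha N)^n\qquad\text{uniformly in }n\geq 1,
\]
for some $\alpha\in(0,1)$ and $C'>0$. Since Corollary~\ref{coro72} gives $\lambda_n\leq C''N^nR_{n,F}$, this in turn reduces to showing a geometric decay $R_{n,F}\leq C'''\alpha^n$ of the face-to-face resistance.

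For the decay of $R_{n,F}$, the plan is to iterate Corollary~\ref{coro72} itself: use a face-to-face function $g_{n^*}$ at an intermediate scale $n^*$ as a building brick. After averaging $g_{n^*}$ under the reflections $\tilde\Gamma_{[2]}, \tilde\Gamma_{[3]}$ to obtain a symmetrized $\tilde g_{n^*}$ with the same energy bound $\mathcal{D}_{n^*}(\tilde g_{n^*})\lesssim N^{n^*}/\lambda_{n^*}$, one builds a ``ramp'' function
\[
F_m(u\cdot v):=a_u+k^{-(m-n^*)}\tilde g_{n^*}(v),\qquad u\in W_{m-n^*},\ v\in W_{n^*}.
\]
The extra symmetries make $F_m$ continuous across macro cells adjacent in the $x_2,x_3$ directions, while the boundary values match on faces perpendicular to $x_1$, so inter-cell contributions vanish. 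A direct accounting yields $\mathcal{D}_m(F_m)\lesssim N^mk^{-2(m-n^*)}/\lambda_{n^*}$.

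\textbf{Main obstacle.} The test function $F_m$ only provides a \emph{lower} bound on $R_{m,F}$ (namely $R_{m,F}\geq 1/\mathcal{D}_m(F_m)$), whereas what is needed is an \emph{upper} bound. To close the loop I would set up a self-consistent induction: assuming the bound at scale $n^*$, combine the energy estimate above with the commute-time/Green identity already used in the proof of Theorem~\ref{thm71} and Proposition~\ref{prop66} to propagate the estimate to scale $m$. The three-dimensional face-inclusion geometry guaranteed by the assumption $N\geq 8+12(k-2)+6(k-2)^2$ provides the essential quantitative input — enough transverse parallel cell-paths in the $x_2,x_3$ directions to force a strict contraction factor $\alpha<1$ at each renormalization step. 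Verifying this strict contraction, with constants not accumulating across scales, is the hardest step and the place where the specific $3$-dimensional geometry of $\mathcal{USC}^{(3)}$ (beyond the planar $\mathcal{USC}$ treatment in \cite{CQ1}) plays its essential role.
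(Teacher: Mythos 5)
Your reduction is exactly the paper's: combine $\lambda_m\lesssim\lambda_{m-n}\sigma_n$ (Proposition~\ref{prop32}), $\sigma_n\asymp\lambda_n$ (Lemma~\ref{lemma33}), and $\lambda_n\lesssim N^nR_{n,F}$ (Corollary~\ref{coro72}) to reduce the lemma to a geometric decay $R_{n,F}\leq\alpha^n$. The gap is precisely the step you yourself flag as an obstacle. Constructing a good ramp-type test function $F_m$ between $\tilde{F}_{m,1,0}$ and $\tilde{F}_{m,1,1}$ only yields $R_{m,F}\geq 1/\mathcal{D}_m(F_m)$, a \emph{lower} bound on the resistance, while the lemma requires an \emph{upper} bound. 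An upper bound on $R_{n,F}$ requires a lower bound on $\mathcal{D}_n(f)$ for \emph{every} admissible test function, so producing one low-energy $F_m$ cannot close the argument, and no self-consistent induction built on test-function constructions can reverse the sign of the inequality. The appeal to a commute-time/Green identity from Theorem~\ref{thm71} and Proposition~\ref{prop66} is not developed into anything that would supply the missing direction.

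The paper's actual proof of $R_{n,F}\leq\alpha^n$ is a short, direct parallel-chains argument and uses none of the machinery you propose. Set $A=\bigcup_{o\in\{2,3\},s\in\{0,1\}}\tilde{F}_{1,o,s}$ and $B=\{w_1\cdots w_n\in W_n:w_i\in A\ \forall i\}$; by the face-included condition these shell cells are exactly grid-aligned. For each $w\in B_0:=B\cap\tilde{F}_{n,1,0}$, the column $B_w=\{v\in B:P_{2,3}(\Psi_vK)=P_{2,3}(\Psi_wK)\}$ is a simple vertical chain of $k^n$ cells joining $w$ to $\tilde{\Gamma}_{[1]}(w)\in\tilde{F}_{n,1,1}$, and distinct columns are disjoint. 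Any admissible $f$ therefore satisfies
\[
\mathcal{D}_n(f)\ \geq\ \sum_{w\in B_0}\mathcal{D}_{n,B_w}(f|_{B_w})\ \geq\ \#B_0\cdot k^{-n}\ =\ \Big(\frac{4k-4}{k}\Big)^n,
\]
so $R_{n,F}\leq\big(k/(4k-4)\big)^n$ with $\alpha:=k/(4k-4)<1$ because $k\geq3$. Note also that this uses only $k\geq 3$ and face-inclusion, not the numerical lower bound $N\geq 8+12(k-2)+6(k-2)^2$ that you single out as the essential input.
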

\begin{proof}
In fact, we take $\alpha=\frac{k}{4k-4}$ in the lemma, and will show that $R_{n,F}\leq \alpha^n$, where $R_{n,F}$ is defined in Corollary \ref{coro72}. The lemma then follows from the fact that $\lambda_m\lesssim \lambda_n\cdot \lambda_{m-n}$ by Proposition \ref{prop32} and Lemma \ref{lemma33} and the fact that $\lambda_{n}\lesssim N^n R_{n,F}$ by Corollary \ref{coro72}.
	
  Denote $A=\bigcup_{o\in \{2,3\},s\in \{0,1\}}\tilde{F}_{1,o,s}, B = \{w=w_1\cdots w_n\in W_n:w_i\in A \text{ for any }1 \leq i \leq n\}$ and $B_s = B\cap \tilde{F}_{n,1,s}$ for $s \in \{0,1\}$. Let $P_{2,3}$ be the orthogonal projection defined as $P_{2,3}(x) = (x_2,x_3)$ for $x = (x_1,x_2,x_3)\in \mathbb{R}^3$. For each $w\in B_0$, we denote $B_w =\big \{v\in B:P_{2,3}(\Psi_vK) = P_{2,3}(\Psi_wK)\big\}$. Clearly $B_w\cap B_1 = \big\{\tilde{\Gamma}_{[1]}(w)\big\}$ for each $w\in B_0$. Then, for any $w\in B_0$ and $g\in l(B_w)$ with $g(w) = 0,g(\tilde{\Gamma}_{[1]}(w)) = 1$, we have $\mathcal{D}_{n,B_w}(g) \geq k^{-n}$. Noticing that $B_w\cap B_v = \emptyset$ for any $w\neq v\in B_0$, for any $f\in l(W_n)$ with $f|_{\tilde{F}_{n,1,0}} = 0,f|_{\tilde{F}_{n,1,1}} = 1$, we then have $\mathcal{D}_{n}(f) \geq \sum_{w\in B_0}\mathcal{D}_{n,B_w}(f|_{B_w})\geq \#B_0\cdot k^{-n} = \big(\frac{4k-4}{k}\big)^n=\alpha^{-n}$. This implies the desired inequality $R_{n,F}\leq \alpha^n$.
\end{proof}

\begin{lemma}\label{lemma82}
There is a sequence of functions $\big\{g_m\in l(\tilde{F}_{1,3,0}\cdot W_{m-1})\big\}_{m\geq 1}$ satisfying the following requirements:

{(a).} for $m\geq 1$, $g_m = g_m\circ \tilde{\Gamma}_{[2]}$, $0 \leq g_m \leq 1$, and
\[g_m(w) = \begin{cases}
           0, & \mbox{if } w\in \tilde{F}_{1,3,0}\cdot W_{m-1}\cap \{a_w = 0\}, \\
           1, & \mbox{if } w\in \tilde{F}_{1,3,0}\cdot W_{m-1}\cap \{b_w = 1\}; \\
         \end{cases}\]

{(b).} for $ m\geq 1$, $\mathcal{D}_{m,\tilde{F}_{1,3,0}\cdot W_{m-1}}(g_m) \leq CN^{m}\lambda_m^{-1}$ for some $C>0$ independent of $m$;

{(c).} for $ m\geq 2$, $g_{m}(i\cdot w) = a_i + \frac{g_{m-1}(w)}{k},\forall i\in \tilde{F}_{1,3,0},\forall w\in \tilde{F}_{1,3,0}\cdot\tilde{F}_{m-2,3,1}$;

{(d).} for $ m\geq 2$,
\[g_m(w) =
           \frac lk,  \mbox{if } w\in \tilde{F}_{2,3,0}\cdot W_{m-2}\cap \{a_w = \frac lk\textit{ or } b_w = \frac lk\} \textit{ for } l=1,\cdots, k-1.
      \]
\end{lemma}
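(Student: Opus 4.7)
My plan is to construct $\{g_m\}$ by strong induction on $m$, treating all of (a)--(d) as the inductive hypothesis.

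For the base case $m = 1$, the domain $\tilde{F}_{1,3,0}$ is a bounded $2$-dimensional cell graph. Define $g_1(w)$ to depend only on $a_w$ via a piecewise-constant assignment that is $0$ on $\{a_w = 0\}$, is $1$ on $\{b_w = 1\}$, and takes values $l/k$ on intermediate columns. This function is automatically $\tilde{\Gamma}_{[2]}$-symmetric and valued in $[0,1]$. The energy is bounded by a constant depending only on $K$, which suffices since $N/\lambda_1$ is bounded below by a positive constant by \eqref{eqn33}. Conditions (c) and (d) are vacuous for $m = 1$.

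For the inductive step, given $g_{m-1}$ satisfying (a)--(d), partition $\tilde{F}_{1,3,0} \cdot W_{m-1}$ into three regions. The \emph{recursive region} consists of cells $i \cdot v$ with $i \in \tilde{F}_{1,3,0}$ and $v \in \tilde{F}_{1,3,0} \cdot \tilde{F}_{m-2,3,1}$, where $g_m$ is forced by (c). The \emph{boundary region} consists of cells touching $F_{1,0}$, $F_{1,1}$, or the grid planes $x_1 = l/k$ within $\tilde{F}_{2,3,0} \cdot W_{m-2}$, where $g_m$ is forced by (a) and (d). The \emph{free region} is the remainder. On the free region, choose $g_m$ to have small energy subject to the prescribed data on the interfaces with the other regions, which is achieved by applying Corollary \ref{coro72} to suitable sub-problems and scaling by the boundary differences, which are always of size $O(1/k)$.

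For the energy bound (b), the recursive region contributes $k^{-2}$ times the energy of $g_{m-1}$ per column (by the scaling in (c)), summed over the $O(k^2)$ columns indexed by $\tilde{F}_{1,3,0}$, giving a total of $O(N^{m-1}/\lambda_{m-1})$ by the inductive hypothesis. The free region contributes another $O(N^{m-1}/\lambda_{m-1})$ via Corollary \ref{coro72} applied with boundary data of magnitude $O(1/k)$. Cross-region edges contribute lower-order terms. Combining with Lemma \ref{lemma81}, which gives $\lambda_m \leq C \alpha N \lambda_{m-1}$ for some $\alpha < 1$, one obtains $N^{m-1}/\lambda_{m-1} \lesssim N^m/\lambda_m$, so the total energy is $\lesssim N^m/\lambda_m$, as required.

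The main obstacle is the construction of the free-region extensions satisfying all constraints simultaneously. Corollary \ref{coro72} only supplies face-to-face energy-minimizing potentials, so meeting the mixed boundary data arising from both the recursive formula and the grid-value prescription requires decomposing the prescribed data into a superposition of face-to-face pieces, solving each independently, and recombining via the triangle inequality in the energy norm. The $\tilde{\Gamma}_{[2]}$-symmetry can be enforced at each stage by averaging with the reflected extension, at worst doubling the energy, and the $[0,1]$-valued constraint can be imposed by truncating via $(g_m \wedge 1) \vee 0$, which does not increase the energy.
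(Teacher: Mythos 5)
The paper's construction is not inductive in your sense, and the explicit formula it writes down is exactly what your proposal flags as the ``main obstacle'' and then leaves unresolved. With $h_m$ the minimizer for $\tilde F_{m,1,0}\to\tilde F_{m,1,1}$ and $h'_m$ the minimizer for $\tilde F_{1,3,0}\cdot W_{m-1}\to\tilde F_{m,3,1}$ (both with energy $\lesssim N^m\lambda_m^{-1}$ via Corollary~\ref{coro72}), the paper sets $g_1=h_1|_{\tilde F_{1,3,0}}$ and, for $m\ge2$,
\[ g_m(i\cdot w)=h_m(i\cdot w)\,h'_{m-1}(w)+\Bigl(a_i+\tfrac{h_{m-1}(w)}{k}\Bigr)\bigl(1-h'_{m-1}(w)\bigr),\qquad i\in\tilde F_{1,3,0},\ w\in W_{m-1}. \]
There is no hard partition into recursive/boundary/free regions; instead the cutoff $h'_{m-1}$ smoothly interpolates between the column-linear prescription $a_i+h_{m-1}(w)/k$ (active on $\tilde F_{1,3,0}\cdot W_{m-2}$, where $h'_{m-1}\equiv0$) and the global face-to-face minimizer $h_m$ (active on $\tilde F_{m-1,3,1}$, where $h'_{m-1}\equiv1$). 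All of (a), (c), (d) then fall out by inspection --- for (c) one only needs $g_{m-1}=h_{m-1}$ on $\tilde F_{1,3,0}\cdot\tilde F_{m-2,3,1}$, which holds because $h'_{m-2}\equiv1$ on $\tilde F_{m-2,3,1}$ --- and (b) follows from $0\le h_m,h'_m\le1$, Cauchy--Schwarz, Corollary~\ref{coro72}, and $\lambda_m\asymp\lambda_{m-1}$, with a constant that depends only on $K$.

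Two genuine gaps in your proposal. First, the free-region step is asserted rather than constructed: the boundary data there simultaneously prescribes $0$, $1$, $l/k$, and the recursive values $a_i+g_{m-1}(w)/k$ on several distinct interfaces, and Corollary~\ref{coro72} controls only a single pair of opposite faces. The sentence about ``decomposing the prescribed data into a superposition of face-to-face pieces'' names the difficulty without resolving it; the paper's interpolation formula is, in effect, that decomposition made explicit. Second, because your recursive region is populated by $g_{m-1}$ rather than the universal $h_{m-1}$, your energy estimate at stage $m$ inherits the constant from stage $m-1$: the recursive contribution is $\lesssim(\#\tilde F_{1,3,0}\cdot k^{-2})\,C_{m-1}N^{m-1}\lambda_{m-1}^{-1}$, and after converting to scale $m$ via Lemma~\ref{lemma81} one obtains a recursion $C_m\le\kappa C_{m-1}+C''$, which closes only if the effective contraction factor $\kappa$ (involving $\#\tilde F_{1,3,0}/k^2$, $\alpha$, and the implicit constant of Lemma~\ref{lemma81}) is strictly less than $1$ --- a quantitative condition you neither state nor verify. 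The paper sidesteps this entirely: $g_m$ is expressed through $h_{m-1}$, not $g_{m-1}$, so the energy bound at level $m$ never references the constant at level $m-1$.
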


\begin{figure}[h]
	\centering
	\includegraphics[width=6cm]{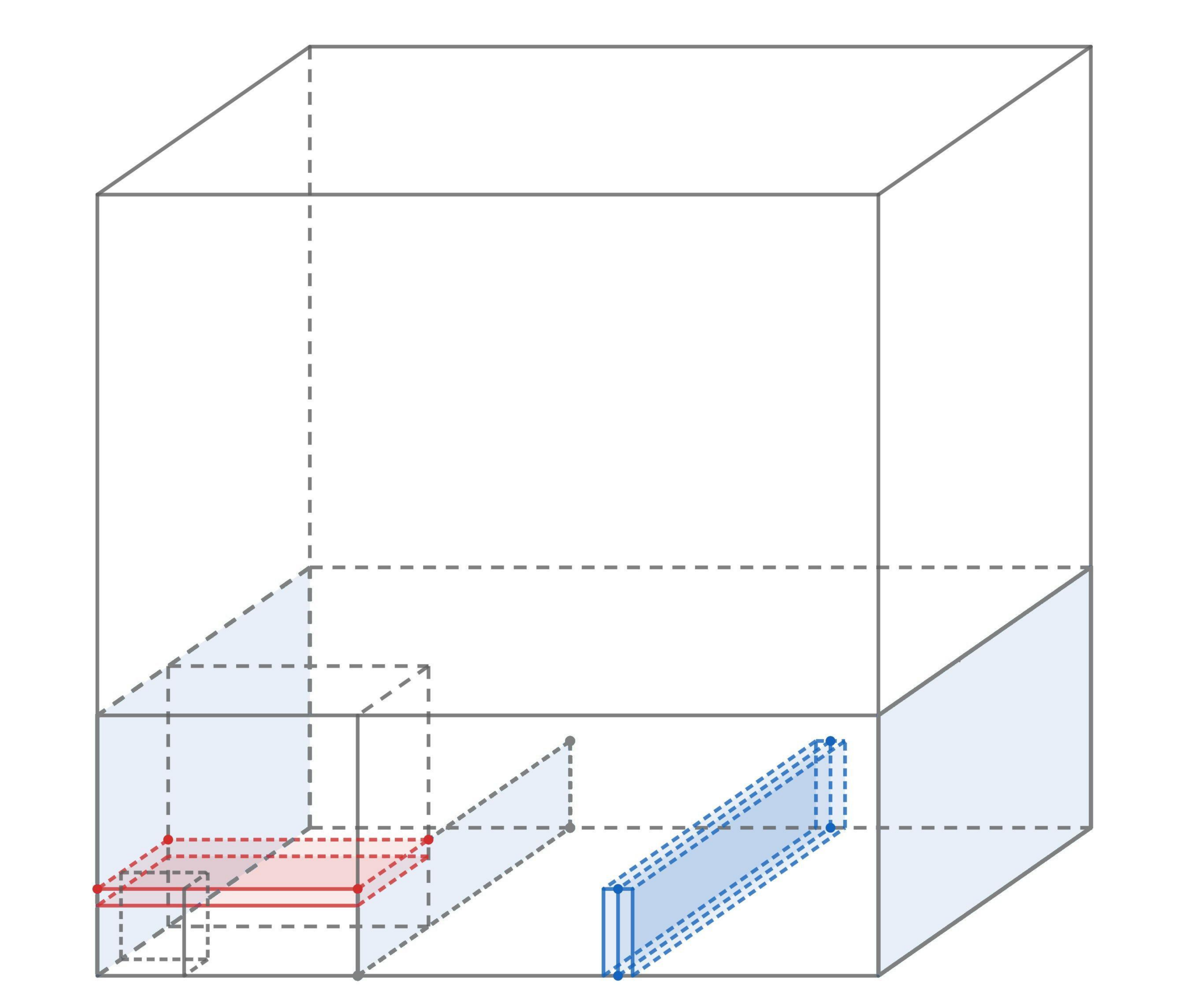}
	\begin{picture}(0,0)
	\put(-225,20){$\tilde F_{1,3,0}\cdot W_{m-1}$}\put(-100,80){$W_m$}
	\end{picture}
	\begin{center}
		\caption{The red part represents $i\cdot \tilde F_{1,3,0}\cdot\tilde F_{m-2,3,1}$ in (c), the blue part represents the part mentioned in (d).}
	\end{center}
\end{figure}

\begin{proof}
  First, for each $m\geq 1$, let $h_m,h'_m$ be two functions in $ l(W_m)$, so that $h_m|_{\tilde{F}_{m,1,0}} = 0,h_m|_{\tilde{F}_{m,1,1}} = 1$, $\mathcal{D}_{m}(h_m) = R_m(\tilde{F}_{m,1,0},\tilde{F}_{m,1,1})^{-1}$; and $h'_m|_{\tilde{F}_{1,3,0}\cdot W_{m-1}} = 0,h'_m|_{\tilde{F}_{m,3,1}} = 1$, $\mathcal{D}_{m}(h'_m) = R_{m}(\tilde{F}_{1,3,0}\cdot W_{m-1},\tilde{F}_{m,3,1})^{-1}$.
  We define $g_1\in l(\tilde F_{1,3,0})$ by $g_1 = h_1|_{\tilde F_{1,3,0}}$, and for $m\geq 2$, define $g_m\in l(\tilde{F}_{1,3,0}\cdot W_{m-1})$ by
  \[g_m(i\cdot w) = h_m(i\cdot w)\cdot h'_{m-1}(w) + (a_i + \frac {h_{m-1}(w)}{k})\cdot(1 - h'_{m-1}(w)),\quad\forall i\in \tilde{F}_{1,3,0},\forall w\in W_{m-1}.\]

Next,  the requirement {(a)} follows from $h_m = h_m\circ\tilde{\Gamma}_{[2]}, h'_m = h'_m\circ\tilde{\Gamma}_{[2]}$, $0 \leq h_m,h'_m \leq 1$, and the boundary values of $h_m$. Also, it is direct to check that the requirements {(c), (d)} are satisfied. So it only remains to estimate the energy of $g_m$. Note that by Corollary \ref{coro72}, we have $\mathcal{D}_m(h_m) \leq C'N^{m}\lambda_m^{-1}$ and $\mathcal{D}_{m}(h'_m) \leq C'N^m\lambda_m^{-1}$ for some $C'>0$ independent of $m$. Thus by $\lambda_m\asymp\lambda_{m+1}$ and $0 \leq h_m,h'_m \leq 1$, we have
\[
\mathcal{D}_{m,\tilde{F}_{1,3,0}\cdot W_{m-1}}(g_m)\leq 4C'\big(N^m\lambda_m^{-1} + N^{m-1}\lambda_{m-1}^{-1} + 2k^2N^{m-1}\lambda_{m-1}^{-1}\big)\leq CN^m\lambda_m^{-1}
\]
  for some $C > 0$ independent of $m$, which gives the requirement {(b)}.
\end{proof}

\begin{lemma}\label{lemma83}
For $n\geq 1$, there is a function $f_n\in l(W_n)$ so that $\mathcal{D}_{n}(f_n) \leq CN^{n}\lambda_n^{-1}$ for some $C>0$ independent of $n$, and $f_n(w)=c_w$ for all $w\in \partial W_n$.
\end{lemma}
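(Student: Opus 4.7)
I would prove Lemma \ref{lemma83} by induction on $n$, using the bricks $g_n$ of Lemma \ref{lemma82} as the main building blocks. The boundary condition $f_n(w)=c_w$ is a ``linear-in-$x_1$'' requirement on each face of $\square$, which is exactly what one copy of $g_n$ (rotated/reflected) realizes on one face-slab with the correct $N^n\lambda_n^{-1}$ energy budget.

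The base case $n=1$ is trivial: setting $f_1(w)=c_w$ for each $w\in W_1$ gives $\mathcal{D}_1(f_1)=O(1)$ on a finite graph, so the bound holds by choosing $C$ large. For the inductive step, fix $n\geq 2$ and partition $W_n$ according to the first letter $i\in W_1$. If $i\notin \partial W_1$ (a strictly interior 1-cell), set $f_n(i\cdot w)=a_i+\tfrac{1}{k}f_{n-1}(w)$; this contributes $\tfrac{1}{k^2}\mathcal{D}_{n-1}(f_{n-1})$ to the local energy on $i\cdot W_{n-1}$, and is compatible with the linear-in-$x_1$ profile. If $i\in \partial W_1$ lies on exactly one face $F_{o,s}$, define $f_n|_{i\cdot W_{n-1}}$ to be a copy of $g_n$ transported by the element of $\msG$ that maps $F_{3,0}$ to $F_{o,s}$ while preserving the $x_1$-direction, then shifted by the appropriate constant so that on left/right faces the brick's $0/1$ values (Lemma \ref{lemma82}(a)) coincide with $c_w$ up to the negligible offset $1/(2k^n)$. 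For 1-cells $i$ lying on two or three faces (edge/corner cells), the same prescription is self-consistent thanks to property (d) of Lemma \ref{lemma82}: both candidate bricks take the value $l/k$ on grid-aligned cells at $x_1=l/k$, forcing agreement along shared edges.

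The energy bound follows by summation. The $O(1)$ bricks together contribute $\lesssim N^n\lambda_n^{-1}$ by Lemma \ref{lemma82}(b). The interior-recursive 1-cells contribute $\tfrac{N_{\mathrm{int}}}{k^2}\mathcal{D}_{n-1}(f_{n-1})\lesssim \tfrac{N}{k^2}\cdot N^{n-1}\lambda_{n-1}^{-1}$ by induction. The interface edges split into (i)~edges between bricks on adjacent boundary-slabs, handled by Lemma \ref{lemma82}(d); and (ii)~edges between a brick and an adjacent interior-recursive block, handled by Lemma \ref{lemma82}(c), which gives the brick the precise recursive form $a_i+g_{n-1}(w)/k$ on its inner layer. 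To make (ii) truly compatible with the interior recursion $a_i+f_{n-1}(w)/k$, I would strengthen the inductive hypothesis so that $f_{n-1}$ coincides with $g_{n-1}$ on the relevant face-adjacent subset $\tilde{F}_{1,3,0}\cdot W_{n-2}$ (and its symmetric images), thereby making the gluing exact rather than merely bounded.

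\textbf{Main obstacle.} The delicate point is closing the recursion: since Remark \ref{remark82} only gives $\rho\leq N/k^2$, the naive summation yields $\mathcal{D}_n(f_n)\lesssim (N/k^2)\cdot N^{n-1}\lambda_{n-1}^{-1}+O(N^n\lambda_n^{-1})$, and the first term could grow like $(N/k^2)^n$, which dominates $\rho^n\asymp N^n\lambda_n^{-1}$ unless one is careful. Overcoming this requires either exploiting the fact that the number of strictly interior 1-cells is substantially smaller than $N$ once one accounts for $|\partial W_1|\geq 8+12(k-2)+6(k-2)^2$, or — more robustly — carrying a sharper trace-type quantity in the induction (controlling face-restrictions of $f_{n-1}$, not only its total energy) so that the recursion regenerates the right resistance factor $\rho$ at every step. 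Pinpointing this compatibility between the brick properties (a)--(d) and a suitable strengthened inductive statement is where the real work is, and it parallels the ``building brick'' argument the authors already developed in \cite{CQ1,CQW} for the planar case.
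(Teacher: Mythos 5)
Your plan diverges from the paper's proof in a way that matters: the paper does \emph{not} induct on $n$, precisely because the naive interior recursion you describe does not close. You correctly flag the obstacle, but the ``strengthened inductive hypothesis'' you sketch only fixes the \emph{gluing} between bricks and interior blocks, not the \emph{energy budget}. The interior blocks $i\cdot W_{n-1}$ with $i\notin\partial W_1$ contribute $\frac{N_{\mathrm{int}}}{k^2}\mathcal{D}_{n-1}(f_{n-1})$, and for the bound to close one would need $N_{\mathrm{int}}\lesssim k^2\rho$. Neither the upper bound $N_{\mathrm{int}}<(k-2)^3$ forced by the face-included condition nor the lower bound $\rho\geq(4k-4)/k$ from Lemma~\ref{lemma81} suffices: e.g.\ for $k\geq 10$ one can have $(k-2)^3/k^2>(4k-4)/k$, so the recursion factor can strictly dominate $\rho$ regardless of how cleverly the traces are tracked.

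The paper's construction avoids this entirely. Starting from the face-to-face harmonic function $f'_{n,0}$ (with energy $R_{n,F}^{-1}\lesssim N^n\lambda_n^{-1}$ by Corollary~\ref{coro72}), it overwrites values only on nested slabs $\tilde F_{m,3,0}\cdot W_{n-m}$, $m=1,\dots,n$, of geometrically shrinking thickness near the single face $F_{3,0}$: on the $m$-th slab it installs $k^{2(m-1)}$ copies of $g_{n-m+1}$, scaled by $k^{-(m-1)}$, which by Lemma~\ref{lemma82}(a),(c),(d) glue exactly along their mutual faces and along the inner boundary of the slab. The resulting increment to the energy is $\lesssim N^{n-m+1}\lambda_{n-m+1}^{-1}$, and the sum $\sum_{m=1}^n N^m\lambda_m^{-1}$ is geometric by Lemma~\ref{lemma81} ($N^m\lambda_m^{-1}\lesssim\alpha^{n-m}N^n\lambda_n^{-1}$, $\alpha<1$). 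Then a further correction and symmetrization yield $f_n$. The index being iterated is the slab depth $m$, not the level $n$: there is no full interior to recurse into, hence no $N_{\mathrm{int}}/k^2$ factor. This one-face peeling with decreasing scales is the idea your proposal is missing, and it is exactly where Lemma~\ref{lemma81} is used.
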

\begin{proof}
  First, we define a function $f'_n\in l(W_n)$ by induction. First let $f'_{n,0}\in l(W_n)$ with $f'_{n,0}|_{\tilde{F}_{n,1,0}} = 0, f'_{n,0}|_{\tilde{F}_{n,1,1}} = 1$ and $\mathcal{D}_{n}(f'_{n,0}) = R_{n,F}^{-1}$. Now for each $1\leq m\leq n$, we define $f'_{n,m}$ inductively by
  \[f'_{n,m}(w\cdot \tau) = \begin{cases}
      a_w + \frac{g_{n-m+1}(\tau)}{k^{m-1}}, & \mbox{if }w\in \tilde{F}_{m-1,3,0} \mbox{ and }\tau\in \tilde{F}_{1,3,0}\cdot W_{n-m}, \\
      f'_{n,m-1}(w\cdot \tau), & \mbox{otherwise},
    \end{cases}\]
  where $g_m$'s are the same in Lemma \ref{lemma82}. At the end, we define $f'_n = \frac{k^n - 1}{k^n}\cdot f'_{n,n} + \frac{1}{2k^{n}}$.

  For each $1 \leq m \leq n$, we have
  \[\begin{aligned}&\mathcal{D}_n(f'_{n,m}) \leq\mathcal{D}_n(f'_{n,m-1}) + \mathcal{D}_{n,\tilde{F}_{m,3,0}\cdot W_{n-m}}(f'_{n,m}) \\
  \leq &\mathcal{D}_n(f'_{n,m-1}) + k^{2(m-1)}\cdot\mathcal{D}_{n-m+1,\tilde{F}_{1,3,0}\cdot W_{n-m}}\big(\frac{g_{n-m+1}}{k^{m-1}}\big) \leq \mathcal{D}_n(f'_{n,m-1}) + C_1N^{n-m+1}\lambda_{n-m+1}^{-1}
  \end{aligned}\]
  for some $C_1 > 0$ independent of $m,n$, where the first inequality follows from the requirements {(a),(c),(d)} of $\{g_m\}_{m\geq 1}$ and the last inequality follows from the requirement {(b)} of $\{g_m\}_{m\geq 1}$. By summing up the above estimate over $1\leq m\leq n$, and using Corollary \ref{coro72}, we have
  \[
  \mathcal{D}_{n}(f_{n,n}') \leq \mathcal{D}_n(f_{n,0}') + C_1\sum_{m = 1}^{n}N^{n-m+1}\lambda_{n-m+1}^{-1} \leq C_2\sum_{m = 1}^{n}N^{m}\lambda_{m}^{-1}
\]
  for some $C_2 > 0$ independent of $n$. On the other hand, by Lemma \ref{lemma81}, we have $N^m\lambda_m^{-1}\leq C_3\alpha^{n-m}N^n\lambda_n^{-1},\forall 1\leq m<n$ for some $C_3 > 0,\alpha\in(0,1)$ independent of $n$. Combining the above two estimates, we have
\begin{equation}\label{eqn81}
\mathcal{D}_n(f'_n) \leq \mathcal{D}_n(f'_{n,n})\leq C_4N^{n}\lambda_n^{-1}
\end{equation}
 for some $C_4>0$ independent of $n$.

  Next, note that $f'_n|_{\tilde{F}_{n,1,0}} = \frac{1}{2k^{n}},f'_n|_{\tilde{F}_{n,1,1}} = 1 - \frac{1}{2k^n}$ and for any $w\in \tilde{F}_{n-2,3,0}$,
  \begin{equation}\label{eqn82}
  a_w - \frac{1}{2k^n} \leq a_w + \frac{1}{k^n}(\frac{1}{2} - a_w) \leq f'_n|_{w\cdot \tilde{F}_{1,3,0}\cdot W_1} \leq b_w + \frac{1}{k^n}(\frac{1}{2} - b_w) \leq b_w + \frac{1}{2k^n}.
  \end{equation}
   We define $f''_n\in l(W_n)$ by
  \[f''_n(v) = \begin{cases}
      f'_n(v), & \mbox{if } v\notin \tilde{F}_{n,3,0}, \\
      c_v, & \mbox{otherwise}.
    \end{cases}\]
  Since for each $v = w\cdot w'\in \tilde F_{n,3,0}$ with $w\in \tilde F_{n-2,3,0}$, it satisfies $a_w \leq c_v \leq b_w$, together with (\ref{eqn82}), we have
    \[
    \mathcal{D}_n(f''_n)\leq\mathcal D_n(f_n')+\sum_{v\in\tilde F_{n,3,0}}\sum_{v'\sn v}(f''_n(v')-c_v)^2 \leq \mathcal{D}_n(f'_n) + C_5\sum_{v\in \tilde{F}_{n,3,0}}(\frac{2}{k^{n-2}})^2,
    \]
    for some $C_5>0$ independent of $n$.
    It then follows from (\ref{eqn81}),
    \[\mathcal{D}_n(f''_n)\leq C_4N^n\lambda_n^{-1} + 4C_5k^4\leq C_6N^n\lambda_n^{-1}\]
    for some $C_6>0$ independent of $n$ since $N^{n}\lambda^{-1}_n\geq C_3\alpha^{-n}\gg 1$.

    Finally, for each $o\in \{2,3\},s\in \{0,1\}$ we denote $A_{n,o,s} = \big\{w\in W_n:\mathrm{dist}(\Psi_wK,F_{o,s}) = \mathrm{dist}(\Psi_wK,\bigcup_{o\in\{2,3\},s\in\{0,1\}}F_{o,s})\big\}$, and define $f_n\in l(W_n)$ by
    \[f_n(w) = \begin{cases}
                 f''_n(w), & \mbox{if } w\in A_{n,3,0}, \\
                 f''_n\circ\tilde{\Gamma}_{[3]}(w), & \mbox{if } w\in A_{n,3,1}, \\
                 f''_n\circ\tilde{\Gamma}_{[2,3]}(w), & \mbox{if } w\in A_{n,2,0},\\
                 f''_n\circ\tilde{\Gamma}_{[2,3]}\circ\tilde{\Gamma}_{[2]}(w), & \mbox{if } w\in A_{n,2,1},
               \end{cases}\]
   which gives the desired function.
\end{proof}

\begin{proof}[Proof of Theorem \ref{thm81}. ]
 For $n\geq 1$, let $f_{n,1}=f_n$, $f_{n,2}=f_{n}\circ \tilde \Gamma_{[1,2]}$ and  $f_{n,3}=f_{n}\circ \tilde \Gamma_{[1,3]}$, where $f_n$ is the same function in Lemma \ref{lemma83}. Let $g_{n,i}=1-f_{n,i}$ for $i=1,2,3$.
  Let $c_*, L_*$ be the same constants in (\textbf{A3}).

 Now for $w\in W_*$ with $|w|=m$, for each $o\in \{1,2,3\} $, $s\in\{0,1\}$, we define $\varphi_{w,n,o},\psi_{w,n,o}\in l(W_{m + n})$, so that for each $v\in W_{m}$ and $\tau\in W_n$,
  \[\begin{aligned}&\varphi_{w,n,o}(v\cdot \tau) =(-1)\vee\frac{\sqrt{3}}{c_*}\big(f_{n,o}(\tau) + k^{m}\cdot P_o\circ\Psi_v(0,0,0) - k^{m}\cdot P_o\circ\Psi_w(0,0,0)\big),\\
  &\psi_{w,n,o}(v\cdot \tau) =(-1)\vee\frac{\sqrt{3}}{c_*}\big(g_{n,o}(\tau) - k^{m}\cdot P_o\circ\Psi_v(0,0,0) + k^{m}\cdot P_o\circ\Psi_w(0,0,0)\big).
  \end{aligned}\]
 Let\[\varphi_{w,n} =0\wedge \min\big\{\varphi_{w,n,o},\psi_{w,n,o}:o\in \{1,2,3\}\big\} + 1.\]
  Then by condition (\textbf{A3}), it is direct to check that $\varphi_{w,n}$ is supported on $\mathcal{N}_w\cdot W_n$ and $\varphi_{w,n}|_{w\cdot W_n} = 1$.  Also by Lemma \ref{lemma83},
  \[\mathcal{D}_{m+n}(\varphi_{w,n}) \leq 6\big(\frac{3}{c_*^2}(2L_*+1)^3\mathcal{D}_n(f_n) + 27(2L_*+1)^3\cdot 9k^{2n}\cdot(k^{m+n})^{-2}\big) \leq CN^n\lambda_n^{-1}\]
  for some $C' > 0$ independent of $n$, where $(2L_*+1)^3$ is an upper bound of $\#\mathcal N_w$. This gives that $R_n(w\cdot W_n,\mathcal{N}_w^c\cdot W_n) \geq C'^{-1}N^{-n}\lambda_n$ for any $w\in W_*$ and $n\geq 1$, and the estimate  $R_n \geq CN^{-n} \lambda_n$ follows.
  
  Finally, the estimate that $R_n\asymp N^{-n}\lambda_n\asymp N^{-n}\sigma_n\asymp \rho^{-n}$ with $\rho>1$ follows from Remark \ref{remark82} and Lemma \ref{lemma81}.
\end{proof}

\section{A limit form}\label{sec9}
In this section, we define a limit form on a $\USC^{(3)}$ $K$. The proof is based on studies on equivalence characterizations of sub-Gaussian heat kernel estimates \cite{AB,BB6,GHL2,HS,M2}, and in particular, Murugan's work \cite{M2} on simplification  for the $d_H<d_W+1$ case. For convenience (to see all the estimates are uniform), we will rescale all graphs $(W_m,E_m)$, and embed them into a same infinite graph. 

Throughout the following context,  for $m\geq 1$ we write $\Psi_1^{\circ(m)}$, $\Psi_1^{\circ(-m)}$ the $m$-times composition of $\Psi_1$, $\Psi_1^{-1}$ respectively. As before, we will always denote $d_H=\frac{\log N}{\log k}$ the Hausdorff dimension of $K$, and write $d_W:=-\frac{\log \rho}{\log k}+d_H$ (with $\rho$ being the same constant in Theorem \ref{thm81}), which is named as \textit{walk dimension} in various contents \cite{B, GHL1}. Note that \[2\leq d_W<d_H<d_W+1,\]
since $1<\rho\leq N/k^2<k$ (see Remark \ref{remark82}).
\vspace{0.2cm}

\noindent(\textbf{\emph{An infinite graph}}). Let
\[
V_m:=\big\{\Psi_1^{\circ(-m)}\circ \Psi_w(\frac{1}{2},\frac{1}{2},\frac{1}{2}):w\in W_m\big\},\qquad\forall m\geq 0,
\]
and write  $V:=\bigcup_{n=0}^\infty V_m$. It is easy to check that $V_0\subset V_1\subset V_2\subset\cdots$. For each $x,y\in V$, we say $x\sim y$ if and only if there is $m\geq 1$ and $w\sm w'$ such that $x=\Psi_1^{\circ(-m)}\circ \Psi_w(\frac{1}{2},\frac{1}{2},\frac{1}{2})$ and $y=\Psi_1^{\circ(-m)}\circ \Psi_{w'}(\frac{1}{2},\frac{1}{2},\frac{1}{2})$. We write $G=(V,\sim)$ the induced \textit{infinite graph} and denote $d_G$ the associated \textit{graph distance}, i.e.,  for $x,y\in V$,
\[d_{G}(x,y)=\min\{L: x=x_0, y=x_L \text{ for some } x_1, \cdots, x_{L-1} \text{ in } V \text{ such that }x_{i-1}\sim x_{i}, 1\leq i\leq L\}.\]
It is clear that $(V_m,\sim)$ is isomorphic to $(W_m,E_m)$ for each $m\geq 1$. For $x\in V$, $r>0$, we write $B_G(x,r):=\{y\in V: d_G(x,y)<r\}$ the \textit{ball} centered at $x$ with radius $r$.\vspace{0.2cm}

For $A\subset V$, we define the \textit{volume} of $A$ by
\[\varrho_G(A)=\sum_{x\in A}\deg(x),\]
where $\deg(x)=\#\{y\in V:y\sim x\}$. Finally, for each $f\in l(V)$ and $A\subset V$, we define
\[\mcD_A(f)=\frac{1}{2}\sum_{x\sim y:x,y\in A}\big(f(x)-f(y)\big)^2.\]

\begin{lemma}\label{lemma91}
There is some constant $C>0$ depending only on $K$, such that

(a). $G=(V,\sim)$ is Ahlfors regular, i.e.,
\[
C^{-1}r^{d_H}\leq \varrho_G\big(B_G(x,r)\big)\leq Cr^{d_H},\quad\forall x\in V,r\geq 1,
\]

(b). the Poincar\'e inequality holds, i.e., for each $f\in l(V)$, $x\in V$ and $r\geq 1$, we have
\[
\sum_{y\in B_G(x,r)}\big(f(y)-\fint_{B_G(x,r)} fd\varrho_G\big)^2\varrho_G(y)\leq Cr^{d_W}\mcD_{B_G(x,2r)}(f),
\]

(c). the capacity upper bound estimate holds, i.e., for each $x\in V$ and $r\geq 1$, there is $\psi\in l(V)$ such that $\psi|_{B_G(x,r)}\equiv 1$, $\psi|_{V\setminus B_G(x,2r)}=0$ and $\mcD_V(\psi)\leq Cr^{d_H-d_W}$.
\end{lemma}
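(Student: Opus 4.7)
The plan is to transfer the finite-level estimates from Sections~\ref{sec3} and \ref{sec8} to the rescaled infinite graph $G$, using the bijection $V_m\leftrightarrow W_m$ given by the parameterization $w \mapsto \Psi_1^{\circ(-m)}\circ \Psi_w(\tfrac12,\tfrac12,\tfrac12)$. Under this identification the $m$-cells in $V_m$ have unit Euclidean size and the natural inclusion $V_m\hookrightarrow V_{m+1}$ corresponds to prepending the symbol $1$. The quantitative inputs are $\lambda_n \asymp k^{n d_W}$ and $R_n \asymp k^{-n(d_H - d_W)}$, which follow from Theorem~\ref{thm81} together with $\rho = k^{d_H - d_W}$, and condition (\textbf{A3}), which converts graph distance into Euclidean distance at scale $k^{-m}$ on $K$.

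For (a), I would first check that $\deg$ is uniformly bounded on $V$: at each level $m$ where a vertex appears, its neighborhood is bounded in size by (\textbf{A3}), and new neighbors can only arise at finitely many later levels before the vertex becomes interior to some $V_{m'}$. Given $x\in V$ and $r\geq 1$, pick $m$ large enough that $B_G(x,r)\subset V_m$ sits away from $\partial V_m$. Translating to $W_m$ and rescaling by $k^m$, $B_G(x,r)$ corresponds to a set of unit cells inside a Euclidean ball of radius $\asymp r$, so Ahlfors $d_H$-regularity of $\mu$ on $K$ (from (\textbf{A1})) yields $\#B_G(x,r)\asymp r^{d_H}$, and bounded degree converts this to the volume estimate.

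For (b), pick $n$ with $k^n\asymp r$ and $m$ so that $B_G(x,2r)$ sits well inside $V_m$. By the $k^m$-rescaling and (\textbf{A3}), $B_G(x,r)$ is contained in a union of boundedly many blocks $v\cdot W_n\subset W_m$ with $v\in W_{m-n}$ whose total support stays inside $B_G(x,2r)$. Each block is graph-isomorphic to $W_n$, so the Poincar\'e inequality with constant $\lambda_n\asymp r^{d_W}$ applies block by block. The block averages of $f$ are controlled via Lemma~\ref{lemma44} (and equivalently the lower bound $\mathscr R_n\gtrsim N^{-n}\lambda_n$ of Proposition~\ref{prop45}), so a chain argument across the boundedly many blocks assembles a single Poincar\'e inequality on $B_G(x,r)$ with the desired constant $\asymp r^{d_W}$; bounded degree converts counting measure to $\varrho_G$.

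For (c), I would reuse the bump function $\varphi_{w,n}$ constructed in the proof of Theorem~\ref{thm81}: choose $n$ with $k^n\asymp r$, $m$ large, and $w\in W_{m-n}$ so that $B_G(x,r)\subset w\cdot W_n$ and $\mathcal{N}_w\cdot W_n\subset B_G(x,2r)$ under the identification $V_m\cong W_m$. Then $\varphi_{w,n}$ vanishes off $\mathcal{N}_w\cdot W_n$, equals $1$ on $w\cdot W_n$, and has energy $\mathcal{D}_{m}(\varphi_{w,n})\lesssim N^n\lambda_n^{-1}\asymp r^{d_H-d_W}$; extending by zero to $V\setminus V_m$ is consistent with the adjacency structure of $G$ because $\varphi_{w,n}$ already vanishes on the boundary of its support. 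The main obstacle will be the chain argument in (b): one must cover $B_G(x,r)$ by an enclosing union of blocks contained in $B_G(x,2r)$ with cardinality bounded independently of $r$, and propagate the accumulated block-average error through the chain so that the net energy on the right matches $\mathcal{D}_{B_G(x,2r)}(f)$ with the correct power of $r$, for which the averaged-value estimates of Section~\ref{sec4} are tailored.
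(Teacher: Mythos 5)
Your plan follows the paper's proof essentially verbatim: (a) via bounded degree and Ahlfors regularity of $\mu$, (b) by decomposing $B_G(x,r)$ into boundedly many blocks of scale $\asymp r$, applying the level-$m$ Poincar\'e constant $\lambda_m$ inside each block and chaining block averages across neighbors, and (c) by transplanting the bump functions that realize the capacity bound $R_m^{-1}\asymp N^m\lambda_m^{-1}$. Two points deserve correction. First, in (b), the cross-block step needs an \emph{upper} bound on the squared difference of neighboring block averages in terms of the energy, which is supplied by $\sigma_m\lesssim\lambda_m$ (Lemma \ref{lemma33}); Proposition \ref{prop45} gives a lower bound on $\mathscr R_n$, which runs in the opposite direction and is not what is needed here (your invocation of Lemma \ref{lemma44} is in the right spirit, as it is one of the ingredients of Lemma \ref{lemma33}, but the ``equivalently'' is misleading). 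Second, in (c), the requirement that a single cell $w\cdot W_n$ with $k^n\asymp r$ contain $B_G(x,r)$ can fail: if $x$ sits on or near a face of the $n$-scale grid, the ball straddles several cells for every choice of $n$. The remedy, which is what the paper does, is to choose the cell scale $k^m$ a bounded factor smaller than $r$, cover $B_G(x,r)$ by the boundedly many $m$-blocks $V_{m,w}$, $w\in A$, meeting it (ensuring $\bigcup_{w\in A}V_{m,\mathcal N_w}\subset B_G(x,2r)$), and take $\psi=\max_{w\in A}\psi_w$ of the corresponding tent functions; bounded $\#A$ and $R_m^{-1}\asymp r^{d_H-d_W}$ then give the stated energy bound. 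With those two adjustments your argument is the paper's.
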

\begin{proof}

For $A\subset V$, we write $\diam_G (A)=\sup\{d_G(x,y): x,y\in A\}$ the \textit{diameter} of $A$. It is easy to check that $k^n\leq \diam_G(V_n) \leq N+3N\sum_{i=1}^{n-1} k^{i}<C_1 k^n$ for $C_1:=\frac{3N}{k-1}$.

(a) follows from the observation that for $n\geq 0$, $k^n\leq r<k^{n+1}$, it always have $\varrho_G\big(B_G(x,r)\big)\asymp N^n$, $\forall x\in V$. 

To show (b) and (c), we introduce some notations as follows.   For each $w\in W_*$, $m\geq 0$, let $V_{m,w}=\Psi_1^{\circ (-(|w|+m))}\circ\Psi_w\circ \Psi_1^{\circ m}(V_m)\subset V$, and for $n\geq 0$, $A\subset W_n$, let $V_{m,A}:=\bigcup_{w'\in A}V_{m,w'}$. Noticing that $(V_{m,A},\sim)$ is isomorphic to $(A\cdot W_m,\stackrel{n+m}{\sim})$ in the graph sense with comparable volumes, all the conclusions from previous sections can be used here.

(b). If suffices to show the result for large $r$. Let us assume $r\geq 2C_1k$, and fix $m\geq 1$ so that $C_1k^m\leq r/2<C_1k^{m+1}$. We choose $n$ large enough so that $B_G(x,2r)\subset V_{n+m}$, and let $A=\{w\in W_n:V_{m,w}\cap B_G(x,r)\neq\emptyset\}$. Clearly, $V_{m,A}\subset B_G(x,3r/2)$ and $\#A\leq C_2$ for some $C_2>0$ depending only on $K$. Then,
\[
\begin{aligned}
&\sum_{y\in B_G(x,r)}\big(f(y)-\fint_{B_G(x,r)} fd\varrho_G\big)^2\varrho_G(y)\leq \sum_{y\in V_{m,A}}\big(f(y)-\fint_{V_{m,A}} fd\varrho_G\big)^2\varrho_G(y)\\
=&\sum_{w\in A}\varrho_G(V_{m,w})\big(\fint_{V_{m,w}} f\varrho_G-\fint_{V_{m,A}} fd\varrho_G\big)^2+\sum_{w\in A}\sum_{y\in V_{m,w}}\big(f(y)-\fint_{V_{m,w}} f\varrho_G\big)^2\varrho_G(y)\\
\leq& C_3(\sigma_m+\lambda_m)\mcD_{B_G(x,2r)}(f),
\end{aligned}
\]
for some $C_3>0$ depending only on $K$, where in the third line we use the definition of $\sigma_m$ and Lemma \ref{lemma28} (any two cells in $A$ are connected by a path contained in $\{w\in W_n:w\stackrel{n}\sim A\}$) to estimate the first term, and the definition of $\lambda_m$ to estimate the second term. The desired  estimate then follows from Theorem \ref{thm81}.

(c). Similarly to (b), for large $r$, we fix $m$ so that $C_1(L_*+1)k^m\leq r<C_1(L_*+1)k^{m+1}$, choose $n$ large enough so that $B_G(x,2r)\subset V_{n+m}$, and let $A=\{w\in W_n:V_{m,w}\cap B_G(x,r)\neq\emptyset\}$. Clearly, $\bigcup_{w\in A}V_{m,\mathcal N_w}\subset B_G(x,2r)$. For each $w\in A$, define $\psi_w\in l(V)$ as the tent function such that $0\leq \psi_w\leq 1$, $\psi_w|_{V_{m,w}}\equiv 1$, $\psi_w|_{V\setminus V_{m,\mathcal N_w}}\equiv 0$ and $\mcD_{V}(\psi_w)\leq R_m^{-1}$. Then $\psi:=\max\{\psi_w:w\in A\}$ is a desired cut-off function, noticing that $R_m\asymp r^{d_W-d_H}$ by Theorem \ref{thm81}.
\end{proof}

\noindent(\textbf{\emph{Cable systems}}). For each $x,y\in \mathbb{R}^3$, denote  $\overline{x,y}=\big\{(1-t)x+ty:t\in [0,1]\big\}$. Define
\[\Xi_m=\bigcup_{x,y\in V_m, x\sim y}\overline{x,y},\quad\forall m\geq 1,\]
and let $\Xi=\bigcup_{m=1}^\infty \Xi_m$. We assign a \textit{measure} $\varrho_\Xi$ to $\Xi$ such that $\varrho_\Xi\big(\{(1-t)x+ty:t\in [a,b]\}\big)=b-a$ for each pair $x\sim y$ and $[a,b]\subset [0,1]$, and define a \textit{metric} $d_\Xi$ on $\Xi$ by $d_\Xi(x,y)=\inf\varrho_\Xi\big(\gamma[0,1]\big)$ for $x,y\in \Xi$, where the infimum is taken over any continuous curve $\gamma:[0,1]\to \Xi$ such that $\gamma(0)=x$ and $\gamma(1)=y$. In particular, it is easy to see that $\varrho_\Xi\asymp\mathcal{H}^1$ and $d_\Xi\asymp d$ (using the property (\textbf{A3})) where $\mathcal H^1$ and $d$ stand for the $1$-dimensional Hausdorff measure and the Euclidean metric on $\mathbb R^3$ as we did before.

For $x\in\Xi$ and $r>0$, we write $B_\Xi(x,r):=\{y\in \Xi:d_\Xi(x,y)<r\}$ for the \textit{ball} centered at $x$ with radius $r$ in the $d_\Xi$ sense. In addition, we let $B_\Xi^{(m)}(x,r)=B_\Xi(x,r)\cap \Xi_m$ for $x\in \Xi_m$.\vspace{0.2cm}

\noindent(\textbf{\emph{Energies on cable systems}}). For each $x\sim y$, we let $\gamma_{x,y}:[0,1]\to\Xi$ be defined as $\gamma_{x,y}(t)=(1-t)x+ty$. We say that a function $f:\Xi\to \mathbb{R}$ is absolutely continuous (or Lipschitz) if for each pair $x\sim y$,
$f\circ \gamma_{x,y}:[0,1]\to\mathbb{R}$ is absolutely continuous (or Lipschitz). If $f$ is absolutely continuous, for each $x\sim y$ we define $|\nabla f|\big(\gamma_{x,y}(s)\big)=\big|(f\circ \gamma_{x,y})'(s)\big|$ for a.e. $s\in [0,1]$, and so $|\nabla f|$ is $\varrho_\Xi$-a.e. defined. Let $\tilde{\mcF}_0$ denote the space of
compactly supported Lipschitz functions on $\Xi$. By Rademacher's theorem, if $f\in \tilde{\mcF}_0$, then $f$ is absolutely continuous, $|\nabla f|$ is uniformly bounded and compactly supported, and therefore $\int_\Xi |\nabla f|^2d\varrho_\Xi<\infty$.  We define a Dirichlet form $(\tilde{\mcE},\tilde{\mcF})$ on $L^2(\Xi,\varrho_\Xi)$ by
\[\tilde \mcE(f,g):=\int_\Xi |\nabla f|\cdot |\nabla g|d\varrho_\Xi<\infty\]
for $f,g\in \tilde\mcF$ and write $\tilde \mcE(f):=\tilde\mcE(f,f)$ for short, where $\tilde\mcF$ is the completion of $\tilde \mcF_0$ with respect to the norm $\|f\|_{\tilde{\mcE}_1}:=\sqrt{\tilde \mcE(f)+\|f\|^2_{L^2(\Xi,\varrho_\Xi)}}$.

For $m\geq 1$, we define $(\tilde{\mcE}^{(m)},\tilde{\mcF}^{(m)})$ on $\Xi_m$ in a same way.

Readers are suggested to refer to \cite{BB6} for formal definitions and detailed discussions about the cable system. \vspace{0.2cm}

The following Theorems \ref{thm92} and \ref{thm94} are now consequences of Lemma \ref{lemma91}, thanks to the deep studies on equivalence characterizations of heat kernel estimates.

\begin{theorem}\label{thm92}
Let $\tilde{p}_t(x,y)$ be the heat kernel associated with $(\tilde\mcE, \tilde \mcF)$. It is  reversible with respect to $\varrho_\Xi$ and satisfies the following non-standard Gaussian-type estimate for some constants $c_1$-$c_4>0$,
\begin{equation}\label{eqn91}
\frac{c_1}{t^{1/2}\vee t^{d_H/d_W}}e^{-c_2\cdot G_\Xi\big(t,d_\Xi(x,y)\big)}\leq \tilde{p}_t(x,y)\leq \frac{c_3}{t^{1/2}\vee t^{d_H/d_W}}e^{-c_4\cdot G_\Xi\big(t,d_\Xi(x,y)\big)},
\end{equation}
where
\[G_\Xi(t,r)=\begin{cases}
	r^2/t,&\text{ if }t<r,\\
	(r^{d_W}/t)^{1/(d_W-1)},&\text{ if }t\geq r.
\end{cases}\]

In addition, for $m\geq 1$ and $t<k^{md_W}$, we have the estimate (\ref{eqn91}) holds for the heat kernel $\tilde{p}^{(m)}_t(x,y)$ associated with $(\tilde{\mcE}^{(m)},\tilde{\mcF}^{(m)})$ as well.
\end{theorem}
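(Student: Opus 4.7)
The plan is to transfer the graph-theoretic estimates of Lemma~\ref{lemma91} from $G=(V,\sim)$ to the metric measure Dirichlet space $(\Xi,d_\Xi,\varrho_\Xi,\tilde{\mcE},\tilde{\mcF})$ and then invoke the equivalent characterizations of sub-Gaussian heat kernel estimates, in particular Murugan's theorem \cite{M2} tailored to the $d_H<d_W+1$ regime. Remark~\ref{remark82} gives $1<\rho\leq N/k^2<k$, so $d_W\geq 2$ and $d_W<d_H<d_W+1$, placing us inside the range where \cite{M2} applies.

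First I would verify that the cable system inherits Ahlfors regularity, the Poincar\'e inequality at exponent $d_W$, and the matching capacity (resistance) estimate from Lemma~\ref{lemma91}. Since $\varrho_\Xi\asymp\mathcal{H}^1$ on edges and $d_\Xi\asymp d$, the vertex volume bound promotes to $\varrho_\Xi(B_\Xi(x,r))\asymp r\vee r^{d_H}$; the vertex Poincar\'e inequality extends to cable functions by taking Lipschitz traces to $V$, applying the vertex Poincar\'e, and controlling the remainder by the edge-wise gradient; and the tent cut-offs from Lemma~\ref{lemma91}(c) can be linearly interpolated along edges to produce cable cut-offs with the same order of energy. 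This is the standard translation between graphs and their cable spaces, as developed in \cite{BB6}. With these three ingredients in place, I can invoke \cite{AB,BB6,GHL2,M2} to conclude the two-sided bound (\ref{eqn91}) in the fractal regime $t\geq 1$, $d_\Xi(x,y)\geq 1$.

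In the complementary short-range regime $t<1$ or $d_\Xi(x,y)<1$, the cable system looks one-dimensional with bounded vertex degrees, so the standard heat kernel bounds on metric graphs (again \cite{BB6}) give Gaussian bounds $\tilde{p}_t(x,y)\asymp t^{-1/2}\exp(-c\,d_\Xi(x,y)^2/t)$ matching $G_\Xi(t,r)=r^2/t$; gluing the two regimes via their matching behaviour near $t\asymp 1$ yields (\ref{eqn91}) in full. For the finite-level form $(\tilde{\mcE}^{(m)},\tilde{\mcF}^{(m)})$, note that $\diam_\Xi\Xi_m\asymp k^m$, so for $t<k^{md_W}$ the typical displacement $\asymp t^{1/d_W}$ is still much smaller than $k^m$ and the diffusion cannot feel the boundary of $\Xi_m$; by the domain-monotonicity and localization principles underlying the equivalent characterizations \cite{BB6,GHL2}, the estimate for $\tilde{p}^{(m)}_t$ is inherited from that for $\tilde{p}_t$ on this time range. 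The main obstacle I expect is the bookkeeping in the first step: verifying that the hypotheses of \cite{M2} are packaged in the exact form required and that the transfer of Poincar\'e and capacity from $G$ to $\Xi$ respects the scale $r\geq 1$ where the Ahlfors regularity takes its fractal form. Nothing is conceptually new, but matching the MMD-space formulation of \cite{M2} to our setting needs care.
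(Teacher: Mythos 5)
Your plan follows the same conceptual route as the paper: feed Lemma~\ref{lemma91} (Ahlfors regularity with exponent $d_H$, Poincar\'e at exponent $d_W$, capacity bound at exponent $d_H-d_W$, all at scales $r\geq 1$) into Murugan's resistance-conjecture result for the $d_H<d_W+1$ regime, and use \cite{BB6,AB,GHL2} to pass between the graph and the cable system. The one genuine organizational difference is where \cite{M2} is invoked. The paper applies \cite[Theorem~1.1]{M2} to the \emph{graph} $(V,\sim)$ directly, obtaining a sub-Gaussian estimate for the random walk, and only then uses \cite{BB6} (Propositions~3.3, 3.5) to deduce the cut-off Sobolev and Poincar\'e inequalities for the cable system, with \cite{AB,GHL2} converting these into the two-sided kernel bound \eqref{eqn91}. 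You propose instead to push the three hypotheses of Lemma~\ref{lemma91} over to the MMD space $(\Xi,d_\Xi,\varrho_\Xi,\tilde\mcE,\tilde\mcF)$ first, and then invoke \cite{M2} on the cable system itself. Both orders are defensible, but the paper's choice is safer: \cite[Theorem~1.1]{M2} as cited is stated for graphs, so applying it there avoids the bookkeeping you flag as the ``main obstacle'' (matching the MMD formulation to the cable setting).

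Two smaller remarks. First, the separate treatment of the short-range regime $t<1$ with an explicit gluing step is unnecessary: the characterization results in \cite{AB,GHL2} are formulated for non-standard scale functions $\phi$ that already interpolate between $t^{1/2}$ and $t^{1/d_W}$, so the single invocation yields \eqref{eqn91} uniformly in $t$ once the volume and capacity data are given in the form $\varrho_\Xi(B_\Xi(x,r))\asymp r\vee r^{d_H}$ and $\mathrm{cap}\asymp r\vee r^{d_H-d_W}$. Second, for $(\tilde\mcE^{(m)},\tilde\mcF^{(m)})$ the paper does not appeal to a localization or domain-monotonicity principle; it observes that the proof of Lemma~\ref{lemma91} yields all three estimates for $(V_m,\sim)$, with constants independent of $m$, provided the relevant balls sit inside $V_m$, which is exactly the constraint encoded by $t<k^{md_W}$. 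Your heuristic that ``the diffusion cannot feel the boundary'' captures the right intuition, but the cleaner justification is simply that the hypotheses of the characterization theorems hold uniformly on the truncated graphs and cable systems.
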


\begin{remark}
Let \[\phi(t)=
	\begin{cases}
		t^{1/2}	,&\text{ if }t<1,\\
		t^{1/d_W}, &\text{ if }t\geq 1.
	\end{cases}\]
Then $t^{1/2}\vee t^{d_H/d_W}\asymp \varrho_\Xi(B_\Xi\big(x,\phi(t))\big)$ for any $x\in \Xi,t>0$, also it holds for any $m\geq 1$, $x\in \Xi_m$ and $t\in (0,k^{md_W})$. In addition, $G_\Xi(t,r)$ is properly associated with $\phi(t)$. See \cite[Section 5.1]{GT} or  \cite[Example 3.18]{HS}.
\end{remark}

\noindent(\textbf{\emph{Weak solution}}). Let $I$ be an open interval in $\mathbb{R}$,  and $\Omega$ be an open subset of $\Xi$. We say that a function $u:I\times \Omega\to\mathbb{R}$ is a weak solution to the heat equation (with respect to $(\tilde{\mcE},\tilde{\mcF})$) if  $\frac{\partial u}{\partial t}$ exists in $L^2(\Omega,\varrho_\Xi)$ for each $t\in I$, and for any non-negative $\psi\in \tilde\mcF\cap C_c(\Omega)$,
\[\langle\frac{\partial u}{\partial t},\psi\rangle_{L^2(\Omega,\varrho_\Xi)}+\tilde{\mcE}(u,\psi)=0,\]
 where $\langle f,g\rangle_{L^2(\Omega,\varrho_\Xi)}:=\int_\Omega f\cdot gd\varrho_\Xi$, $\forall f,g\in L^2(\Omega, \varrho_\Xi)$, and $C_c(\Omega)$ is the space of continuous functions on $\Omega$ with compact support.

For $m\geq 1$, we define a weak solution to the heat equation with respect to $(\tilde{\mcE}^{(m)},\tilde{\mcF}^{(m)})$ in a same way.\vspace{0.2cm}

\noindent(\textbf{\emph{Parabolic Harnack inequality (PHI)$_\phi$}}). We say that $(\tilde{\mcE},\tilde{\mcF})$ satisfies a $\phi$-parabolic Harnack inequality if there exists a constant $C>0$ such that, for any real $t>s>0$ and $x\in \Xi$, and any non-negative weak solution $u$ of the heat equation on $Q_{t,x,s}:=(t-s,t)\times B_\Xi\big(x,\phi(s)\big)$, we have
\[\text{ess inf}_{Q_{t,x,s/4}}u\geq C\text{ess sup}_{Q_{t-s/2,x,s/4}}u.\]

Similarly, we say that $(\tilde{\mcE}^{(m)},\tilde{\mcF}^{(m)})$ satisfies a $\phi$-parabolic Harnack inequality if there exists a constant $C>0$ such that, for any real $k^{md_W}>t>s>0$ and $x\in \Xi_m$, and any non-negative weak solution $u$ of the heat equation on $Q^{(m)}_{t,x,s}:=(t-s,t)\times B^{(m)}_\Xi\big(x,\phi(s)\big)$, we have
\[\text{ess inf}_{Q^{(m)}_{t,x,s/4}}u\geq C\text{ess sup}_{Q^{(m)}_{t-s/2,x,s/4}}u.\]

\begin{theorem}\label{thm94}
$(\tilde{\mcE},\tilde{\mcF})$ satisfies (PHI)$_{\phi}$, and for $m\geq 1$, $(\tilde{\mcE}^{(m)},\tilde{\mcF}^{(m)})$ also satisfies (PHI)$_{\phi}$. In addition, the constant $C>0$ in the definition of  (PHI)$_{\phi}$ can be taken to be the same.
\end{theorem}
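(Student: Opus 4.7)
The plan is to derive (PHI)$_\phi$ directly from the two-sided heat kernel estimate of Theorem \ref{thm92}, by invoking the now-standard equivalence theory for sub-Gaussian heat kernel bounds on metric measure spaces with strongly local, regular Dirichlet forms. On a cable system, $(\tilde\mcE,\tilde\mcF)$ is strongly local, and the measure $\varrho_\Xi$ satisfies volume doubling at all scales by Lemma \ref{lemma91}(a) (after transferring from the graph to $\Xi$ via $\varrho_\Xi\asymp \mathcal{H}^1$ and $d_\Xi\asymp d$). Under these background hypotheses, the results of \cite{AB,BB6,GHL2} (see also \cite{BCM,BM} for the stability viewpoint) assert that the two-sided sub-Gaussian heat kernel estimate \eqref{eqn91} is equivalent to $(\mathrm{PHI})_\phi$, with the constant in each implication depending only on the constants in the other together with the doubling/Poincar\'e data. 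Since Theorem \ref{thm92} gives \eqref{eqn91} with constants $c_1$-$c_4$ depending only on $K$, (PHI)$_\phi$ for $(\tilde\mcE,\tilde\mcF)$ follows with a constant $C$ depending only on $K$.

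Next I would verify that the same argument goes through for each finite-level form $(\tilde\mcE^{(m)},\tilde\mcF^{(m)})$, uniformly in $m$. All the graph-level inputs in Lemma \ref{lemma91} are quantitative statements with constants independent of the ambient scale, and when restricted to balls of radius $r\leq k^m$ in $\Xi_m$ they transfer to the finite cable system with the same constants. Accordingly, the heat kernel estimate (\ref{eqn91}) for $\tilde p^{(m)}_t$ stated for $t<k^{md_W}$ in Theorem \ref{thm92} has the same constants $c_1$-$c_4$ as in the limit case, so the HKE$\Leftrightarrow$PHI equivalence of \cite{AB,BB6,GHL2}, applied to the truncated scale window $0<s<t<k^{md_W}$, delivers $(\mathrm{PHI})_\phi$ with exactly the same constant $C$. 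The condition $s<k^{md_W}$ ensures that $\phi(s)<k^m\asymp \mathrm{diam}(\Xi_m)$, so the Harnack balls lie well inside $\Xi_m$ and there is no boundary effect to worry about.

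The main step that needs care is the passage from the discrete graph inequalities of Lemma \ref{lemma91} to their counterparts on the cable system needed to trigger the equivalence theorems: namely, the Poincar\'e inequality PI$_\phi$ and the cutoff Sobolev inequality CSA$_\phi$ on $\Xi$ (and uniformly on $\Xi_m$). These transfers are by now routine — piecewise-affine extension from $V$ to $\Xi$ preserves volumes and energies up to multiplicative constants depending only on $K$, because each edge contributes $\varrho_\Xi$-measure $1$ and the gradient on an edge is the discrete difference, so $\mcD_V(f)\asymp \tilde\mcE(\tilde f)$ for the affine extension $\tilde f$, and the Poincar\'e and capacity estimates carry over with comparable constants (cf.\ the treatment in \cite{BB6}). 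With these transfers in hand, the remaining part of the proof is a black-box application of the results of \cite{AB,BB6,GHL2}, so the only real obstacle is the uniform bookkeeping of constants across scales, which is already built into Lemma \ref{lemma91} and Theorem \ref{thm92}.
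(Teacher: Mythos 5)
Your proposal is correct and takes essentially the same route as the paper: derive (PHI)$_\phi$ from the two-sided estimate (\ref{eqn91}) of Theorem \ref{thm92} via the standard equivalence theory (the paper cites \cite[Theorem 5.3]{HS} for the HKE $\Leftrightarrow$ PHI step where you cite \cite{AB,BB6,GHL2}, but these are the same circle of results), and then observe that the constants are uniform in $m$ because the inputs of Lemma \ref{lemma91} and the constants $c_1$--$c_4$ in (\ref{eqn91}) are scale-independent. One small point: your third paragraph on transferring PI and CSA from the graph to $\Xi$ is not really a hypothesis of the HKE $\Rightarrow$ PHI implication (which under volume doubling needs only the two-sided kernel bound) but is rather part of how the paper establishes Theorem \ref{thm92} itself; including it is harmless but not needed for this theorem.
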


In fact, by  \cite[Thereom 1.1]{M2} and by Lemma \ref{lemma91}, there is a random walk with a sub-Gaussian heat kernel estimate on $(V,\sim)$. By \cite{BB6}, the cable system satisfies the cut-off Sobolev inequality (\cite[Proposition 3.3]{BB6}) and the Poincar\'e inequality (\cite[Proposition 3.5]{BB6}), whence by \cite{AB} or \cite{GHL2}, we have Theorem \ref{thm92} holds for $(\tilde{\mcE},\tilde{\mcF})$ and $(\tilde{\mcE}^{(m)},\tilde{\mcF}^{(m)})$.

On the other hand, by  \cite[Theorem 5.3]{HS}, the heat kernel estimate (\ref{eqn91}) holds if and only if (PHI)$_\phi$ holds. Then one can easily see that (PHI)$_\phi$ naturally holds on $(\tilde{\mcE},\tilde{\mcF})$ and $(\tilde{\mcE}^{(m)},\tilde{\mcF}^{(m)})$, so Theorem \ref{thm94} holds. \vspace{0.2cm} %In fact, by symmetry, we only need to check this for $x\in (0,k^m/2]^3\cap \Xi_m$.  Finally, following the same arguments of the proof of Theorem 5.3 of \cite{HS}, the heat kernel estimate holds uniformly for each $m$ and $t<k^{md_W}$, whence Theorem \ref{thm92} holds. 

In the remaining of this section, we will take a pointwise (sub-sequential) limit of the renormalized kernels
\begin{equation}\label{eqn92}
\bar p_m(t,x,y):=k^{md_H}\tilde{p}^{(m)}_{k^{md_W}\cdot t}(\Psi_1^{\circ(-m)}x,\Psi_1^{\circ(-m)}y),\quad \forall t\in \mathbb{R}_+,\ x,y\in \Psi_1^{\circ (m)}\Xi_m.
\end{equation}
We will see that $\bar p_m(t,x,y)$ is jointly continuous, and $\Psi_{1}^{\circ(m)}\Xi_m$ converges to $K$ as $m\to \infty$ with respect to the Hausdorff metric. Here, for two compact sets $A,B$ in $\mathbb{R}^3$, we will denote $d_{\mathcal{H}}(A,B)$ the \textit{Hausdorff metric} between them, i.e., $d_{\mathcal H}(A,B):=\inf\big\{r>0:B\subset\bigcup_{x\in A}B(x,r),A\subset\bigcup_{x\in B}B(x,r)\big\}$, where $B(x,r)$ is the Euclidean ball centered at $x$ with radius $r$.

\begin{lemma}\label{lemma94}
Let $m\geq 1$, $t,s\in(0,k^{md_W})$ and $x,y_1,y_2\in \Xi_m$. Also, write $t_0=t\vee s$. Then
\[
\big|\tilde{p}^{(m)}_t(x,y_1)-\tilde{p}^{(m)}_s(x,y_2)\big|\leq \frac{C}{t_0^{1/2}\vee t_0^{d_H/d_W}}\cdot \big(\frac{|t-s|\vee d(y_1,y_2)^{d_W}\vee d(y_1,y_2)^2}{t_0}\big)^\beta,
\]
for some positive constants $C$ and $\beta$.
\end{lemma}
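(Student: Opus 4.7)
The plan is to derive this Hölder-type estimate for $\tilde p^{(m)}_t(x,\cdot)$ from the parabolic Harnack inequality (PHI)$_\phi$ of Theorem \ref{thm94} together with the on-diagonal upper bound from Theorem \ref{thm92}, via the standard implication ``(PHI)$_\phi$ $\Rightarrow$ parabolic Hölder regularity'' (going back to Moser and worked out in the sub-Gaussian setting by several authors; see for instance the discussion around \cite{HS}). Concretely, for fixed $x\in\Xi_m$ the function $u(r,z):=\tilde p^{(m)}_r(x,z)$ is a non-negative (bounded) weak solution of the heat equation on $(0,k^{md_W})\times\Xi_m$, so the whole argument runs on $u$.

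The first step is the oscillation lemma extracted from (PHI)$_\phi$: there exist $\theta\in(0,1)$ and a shrinking factor $\delta\in(0,1)$, depending only on $K$, so that if $u\geq 0$ is a weak solution of the heat equation on a cylinder $Q_{r_0,z_0,s_0}^{(m)}=(r_0-s_0,r_0)\times B^{(m)}_\Xi(z_0,\phi(s_0))$ (with $s_0<k^{md_W}$), then the oscillation of $u$ on the shrunken cylinder $Q_{r_0,z_0,\delta s_0}^{(m)}$ is at most $\theta$ times the oscillation on $Q_{r_0,z_0,s_0}^{(m)}$. Iterating this oscillation decay along the sequence of cylinders with parameters $\delta^j s_0$, one obtains the standard parabolic Hölder regularity statement (PHR)$_\phi$: there exist $C_0>0$ and $\beta\in(0,1)$ depending only on $K$ such that for all $(r_1,\eta_1),(r_2,\eta_2)\in Q_{r_0,z_0,s_0/2}^{(m)}$,
\begin{equation*}
|u(r_1,\eta_1)-u(r_2,\eta_2)|\leq C_0\,\Big(\frac{|r_1-r_2|\vee \phi^{-1}(d_\Xi(\eta_1,\eta_2))}{s_0}\Big)^{\beta}\,\sup_{Q_{r_0,z_0,s_0}^{(m)}}u,
\end{equation*}
where $\phi^{-1}(r)=r^{2}\vee r^{d_W}$ is the inverse of the space-time scaling function $\phi$ from Remark after Theorem \ref{thm92}.

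To finish, I apply the above to $u(r,z)=\tilde p^{(m)}_r(x,z)$ on a cylinder anchored at $r_0=2t_0$, $z_0=y_1$, and $s_0$ a large constant multiple of $t_0$, chosen so that $Q_{r_0,z_0,s_0/2}^{(m)}$ contains both $(t,y_1)$ and $(s,y_2)$ and so that $\phi(s_0)$ stays within the range where (PHI)$_\phi$ applies (this is ensured by $t_0<k^{md_W}$ and $\phi(t_0)\ll\operatorname{diam}\Xi_m$). If $|t-s|\vee\phi^{-1}(d_\Xi(y_1,y_2))$ is already comparable to $t_0$ the estimate is trivial from the upper bound in (\ref{eqn91}); otherwise both points lie in the shrunken cylinder and (PHR)$_\phi$ gives
\begin{equation*}
|\tilde p^{(m)}_t(x,y_1)-\tilde p^{(m)}_s(x,y_2)|\leq C_0\Big(\tfrac{|t-s|\vee\phi^{-1}(d_\Xi(y_1,y_2))}{t_0}\Big)^{\beta}\sup_{Q^{(m)}}\tilde p^{(m)}_\cdot(x,\cdot).
\end{equation*}
The supremum is bounded by $C/(t_0^{1/2}\vee t_0^{d_H/d_W})$ by Theorem \ref{thm92} (the on-diagonal part, since for all $(r,z)$ in the cylinder one has $r\asymp t_0$), and using $d_\Xi\asymp d$ together with $\phi^{-1}(d(y_1,y_2))=d(y_1,y_2)^2\vee d(y_1,y_2)^{d_W}$, this yields exactly the claimed inequality. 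The main technical obstacle is the usual careful bookkeeping of scales when passing between $d_W$-scaling and Gaussian scaling in the definition of $\phi^{-1}$, which forces the $d^{d_W}\vee d^{2}$ combination on the right-hand side, but this is straightforward once (PHR)$_\phi$ is phrased in terms of $\phi^{-1}$.
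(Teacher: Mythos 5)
Your proposal follows essentially the same route as the paper: extract an oscillation-decay estimate from (PHI)$_\phi$, iterate it to get parabolic H\"older regularity on a cylinder, and then bound the supremum over that cylinder by the on-diagonal upper bound from Theorem \ref{thm92}. The one small point to watch is the anchoring: your choice $r_0=2t_0$ with $s_0$ a large constant multiple of $t_0$ can push the cylinder past the time window $(0,k^{md_W})$ on which (PHI)$_\phi$ for $(\tilde{\mcE}^{(m)},\tilde{\mcF}^{(m)})$ is stated, whereas the paper anchors directly at $(t,y_1)$ and only grows the cylinder radius up to $t_0/2$, keeping everything inside the allowed range; this is trivially fixable in your setup.
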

\begin{proof}
The lemma is a consequence of the heat kernel estimate (\ref{eqn91}). For $\delta\in(0,t)$ and $z\in\Xi_m$, we write $Q^{(m)}_{t,z,\delta}=(t-\delta,t)\times B^{(m)}_\Xi\big(z,\phi(\delta)\big)$ and write
\[
\text{Osc}^{(m)}_{t,z}(\delta):=\sup\big\{\big|\tilde{p}^{(m)}_{t_1}(z,z_1)-\tilde{p}^{(m)}_{t_2}(z,z_2)\big|:(t_1,z_1)\in Q^{(m)}_{t,z,\delta},(t_2,z_2)\in Q^{(m)}_{t,z,\delta}\big\}.
\]
Then, it follows from (PHI)$_\phi$ that
\[
\text{Osc}^{(m)}_{t,z}(\delta/4)\leq (1-c)\text{Osc}^{(m)}_{t,z}(\delta),\quad\forall t>\delta>0,
\]
for some constant $c\in (0,1)$ independent of $m,t,\delta,z$.

Let's assume $|t-s|$ is small compared with $t_0$ and $d(y_1,y_2)$ are small compared with $\phi(t_0)$, since otherwise the estimate trivially follows from the upper bound estimate in (\ref{eqn91}). Without loss of generality, we assume $t>s$, and we let $\delta=\max\big\{t-s,\phi^{-1}(d(y_1,y_2))\big\}=|t-s|\vee d(y_1,y_2)^{d_W}\vee d(y_1,y_2)^{2}$. Then
\[
\big|\tilde{p}^{(m)}_t(x,y_1)-\tilde{p}^{(m)}_s(x,y_2)\big|\leq \text{Osc}^{(m)}_{t,y_1}(\delta)\leq (1-c)\text{Osc}^{(m)}_{t,y_1}(4\delta)\leq\cdots\leq (1-c)^{M}\text{Osc}^{(m)}_{t,y_1}(4^M\delta),
\]
where we choose $M\in\mathbb{N}$ so that $t_0/8<4^M\delta\leq t_0/2$, whence
\[
\big|\tilde{p}^{(m)}_t(x,y_1)-\tilde{p}^{(m)}_s(x,y_2)\big|\lesssim (\frac{\delta}{t_0})^\beta\text{Osc}^{(m)}_{t,y_1}(4^M\delta),
\]
for some $\beta>0$. The lemma follows immediately noticing that $\text{Osc}^{(m)}_{t,y_1}(4^M\delta)\lesssim (t_0^{1/2}\vee t_0^{d_H/d_W})^{-1}$ by the heat kernel upper bound estimate in (\ref{eqn91}).
\end{proof}

\begin{corollary}\label{coro95}
For fixed $0<s_1<s_2<\infty$,  there exist positive constants $C,\beta$ depending only $s_1,s_2$ such that
\[
\big|\bar p_m(t,x,y)-\bar p_m(t',x',y')\big|\leq C\big(|t-t'|\vee d(x,x')\vee d(y,y')\big)^\beta,
\]
for any large enough $m\geq 1$, $t,t'\in [s_1,s_2]$ and $x,y,x',y'\in \Psi_1^{\circ(m)}\Xi_m$.
\end{corollary}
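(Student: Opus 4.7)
The plan is to deduce the corollary from Lemma~\ref{lemma94} by combining the triangle inequality, reversibility, and the scaling identity $\bar p_m(t,\cdot,\cdot) = k^{md_H}\tilde p^{(m)}_{k^{md_W}t}(\Psi_1^{\circ(-m)}\cdot,\Psi_1^{\circ(-m)}\cdot)$.

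First, I would split
\[
|\bar p_m(t,x,y) - \bar p_m(t',x',y')| \leq |\bar p_m(t,x,y) - \bar p_m(t',x,y')| + |\bar p_m(t',x,y') - \bar p_m(t',x',y')|,
\]
so that each summand fixes one spatial slot and varies the other (together with $t$). Reversibility of $\tilde p^{(m)}_\tau$ with respect to $\varrho_\Xi$ (from Theorem~\ref{thm92}) gives $\tilde p^{(m)}_\tau(a,b) = \tilde p^{(m)}_\tau(b,a)$, so after swapping the two spatial arguments, Lemma~\ref{lemma94} applies to each summand.

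Next I would transport the bound through the scaling. With $\tau_0 := k^{md_W}(t \vee t')$ and using that $\Psi_1^{\circ(-m)}$ dilates Euclidean distance by $k^m$, Lemma~\ref{lemma94} yields
\[
|\bar p_m(t,x,y) - \bar p_m(t',x,y')| \leq \frac{k^{md_H} C}{\tau_0^{d_H/d_W}} \left(\frac{k^{md_W}|t-t'| \vee (k^m d(y,y'))^{d_W} \vee (k^m d(y,y'))^2}{\tau_0}\right)^{\beta}.
\]
Because $d_H/d_W > 1$, for large $m$ the prefactor is $k^{md_H}/\tau_0^{d_H/d_W} = (t \vee t')^{-d_H/d_W} \leq s_1^{-d_H/d_W}$, bounded independently of $m$. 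Inside the parentheses, the rescaling gives $k^{md_W}|t-t'|/\tau_0 \leq |t-t'|/s_1$, $(k^m d)^{d_W}/\tau_0 \leq d^{d_W}/s_1$, and $(k^m d)^2/\tau_0 = k^{m(2-d_W)} d^2/(t \vee t') \leq d^2/s_1$ since $d_W \geq 2$. As $\Psi_1^{\circ(m)}\Xi_m$ has bounded Euclidean diameter (it sits in $[0,1]^3$), distances are uniformly bounded, so $d^{d_W} \vee d^2 \lesssim d$, and the bound collapses to $C_{s_1,s_2}(|t-t'| \vee d(y,y'))^\beta$. The second summand is treated identically via reversibility, producing a $d(x,x')$ term in place of $d(y,y')$; summing and absorbing constants gives the claim.

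The main obstacle is the hypothesis $k^{md_W}t < k^{md_W}$ of Lemma~\ref{lemma94}, which amounts to $t < 1$. For $s_2 < 1$ this is automatic; for larger $s_2$, the regime $t \geq 1$ must be treated separately, using that $\tilde p^{(m)}_\tau$ equilibrates exponentially to $\varrho_\Xi(\Xi_m)^{-1} \asymp k^{-md_H}$ for $\tau \geq k^{md_W}$, so the variation of $\bar p_m$ there is exponentially small in $\tau/k^{md_W}$ and therefore certainly Hölder. Apart from this, the proof is routine bookkeeping of the scaling factors.
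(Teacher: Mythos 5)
Your proof is correct and follows essentially the same route as the paper: the paper's one-line chain of inequalities implicitly performs exactly your triangle-inequality decomposition (vary $y$ with $x$ fixed, then vary $x$, using the symmetry $\tilde p^{(m)}_\tau(a,b)=\tilde p^{(m)}_\tau(b,a)$ from Theorem~\ref{thm92}), transports the Hölder bound of Lemma~\ref{lemma94} through the scaling $\bar p_m(t,\cdot,\cdot)=k^{md_H}\tilde p^{(m)}_{k^{md_W}t}(\Psi_1^{\circ(-m)}\cdot,\Psi_1^{\circ(-m)}\cdot)$, bounds the prefactor by $s_1^{-d_H/d_W}$ and uses $d_W\geq 2$ together with the bounded diameter of $\Psi_1^{\circ(m)}\Xi_m\subset[0,1]^3$ to absorb the exponents $d_W$ and $2$ into a single Hölder exponent. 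The one place you go beyond the paper is the remark about the hypothesis $k^{md_W}t<k^{md_W}$ in Lemma~\ref{lemma94}; the paper's proof simply invokes the lemma without addressing the case $s_2\geq 1$, so your observation is a fair point, and your resolution (exponential equilibration to the stationary density $\varrho_\Xi(\Xi_m)^{-1}$ for large times) is the standard way to close it.
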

\begin{proof}
By Lemma \ref{lemma94}, for large enough $m$, we have
\[
\begin{aligned}
&\big|\bar p_m(t,x,y)-\bar p_m(t',x',y')\big|\\
=&k^{md_H}\big|\tilde p^{(m)}_{k^{md_W}t}(\Psi_1^{\circ(-m)}x,\Psi_1^{\circ(-m)}y)-\tilde p^{(m)}_{k^{md_W}t'}(\Psi_1^{\circ(-m)}x',\Psi_1^{\circ(-m)}y')\big|\\
\leq& \frac{C_1 }{t_0^{d_H/d_W}}\cdot \big(\frac{(|t-t'|\vee d(y,y')^{d_W}\vee \big(k^{(2-d_W)m}d(y,y')^2\big)\vee d(x,x')^{d_W}\vee \big(k^{(2-d_W)m}d(x,x')^2\big)}{t_0}\big)^\beta\\
\leq & C_2 \big({|t-t'|\vee d(y,y')^{d_W}\vee d(y,y')^2\vee d(x,x')^{d_W}\vee d(x,x')^2}\big)^\beta\\
\leq & C\big(|t-t'|\vee d(x,x')\vee d(y,y')\big)^\beta
\end{aligned}
\]
for some positive constants $C_1, C_2, C$, $\beta$, where $t_0=t\vee t'$, noticing that $d_W\geq 2$. 
\end{proof}

Finally, we need a lemma concerning the convergence of the underlying spaces.
\begin{lemma}\label{lemma96}
(a). $\lim_{m\to\infty}d_\mathcal{H}(\Psi_1^{\circ(m)}\Xi_m,K)=0$.

(b). For each $m\geq 1$, define a measure $\varrho_m$ on $\Psi_1^{\circ(m)}\Xi_m$ by
\[\varrho_m(A)=k^{-md_H}\varrho_\Xi(\Psi_1^{\circ(-m)}A),\quad\forall A\subset \Psi_1^{\circ(m)}\Xi_m.\]
Then, there exists $C>0$ such that
\[
	\varrho_m\Rightarrow_{\square} C\mu, \quad \text{ as }m\to\infty,
\]
where $\mu$ is the normalized Hausdorff measure on $K$, and
$\Rightarrow_{\square}$ means the weak convergence of Radon measures on $\square$.
\end{lemma}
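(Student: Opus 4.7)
For Part (a), the plan is to use the level-$m$ cell structure directly. Every $y\in K$ lies in some $\Psi_wK$ with $w\in W_m$ and thus within Euclidean distance $\sqrt{3}k^{-m}$ of the center $\Psi_w(\tfrac12,\tfrac12,\tfrac12)\in \Psi_1^{\circ(m)}V_m\subset \Psi_1^{\circ(m)}\Xi_m$. Conversely, any point in $\Psi_1^{\circ(m)}\Xi_m$ lies on a segment of Euclidean length $O(k^{-m})$ between two such centers, each within $\sqrt{3}k^{-m}$ of $K$. Hence $d_\mathcal H(\Psi_1^{\circ(m)}\Xi_m, K)=O(k^{-m})\to 0$.

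For Part (b), the strategy is weak compactness combined with a self-similar recursion. Each edge of $\Xi_m$ carries $\varrho_m$-mass $k^{-md_H}$, so $\varrho_m(\square)=k^{-md_H}|E_m|\asymp 1$ by Lemma~\ref{lemma91}(a). To upgrade to convergence of the total mass, I would decompose $E_m=E_m^{\mathrm{cross}}\sqcup\bigsqcup_{i\in W_1}E_m^{(i)}$, where $E_m^{(i)}=\{(i\cdot v,\,i\cdot v'): i\cdot v\stackrel{m}{\sim} i\cdot v'\}$ and $E_m^{\mathrm{cross}}$ collects edges between distinct $W_1$-blocks. A case analysis of Definition~\ref{def210} shows that the symmetric difference between $E_m^{(i)}$ and the copy of $E_{m-1}$ sitting inside block $i$ is supported on pairs from $(\partial W_1)^{m-1}$ (the two relations can diverge only when $i\notin\partial W_1$ and $v,v'\in(\partial W_1)^{m-1}$, since Case~1 vs Case~2 then impose slightly different connection criteria), and therefore has size $O(k^{2m})$; similarly $|E_m^{\mathrm{cross}}|=O(k^{2m})$ by counting face areas of $1$-cells. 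Thus $|E_m|=N|E_{m-1}|+O(k^{2m})$, giving $\varrho_m(\square)=\varrho_{m-1}(\square)+O(k^{m(2-d_H)})$, which is summable because $d_H>2$; hence $\varrho_m(\square)\to C$ for some $C>0$.

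The same decomposition drives the integral recursion. Using $\Psi_1^{\circ(-m)}\Psi_i=T_i\circ \Psi_1^{\circ(-(m-1))}$ for a translation $T_i$ (equivalently $\Psi_1^{\circ(m)}T_i=\Psi_i\circ \Psi_1^{\circ(m-1)}$), each block-$i$ edge of $\Xi_m$ reparametrizes as a translate of an edge of $\Xi_{m-1}$ on which $g\circ\Psi_1^{\circ(m)}$ becomes $g\circ\Psi_i\circ \Psi_1^{\circ(m-1)}$. Combined with the error estimates from the previous paragraph, this yields for any $g\in C(\square)$
\[
\int g\,d\varrho_m=\frac{1}{N}\sum_{i=1}^N\int(g\circ\Psi_i)\,d\varrho_{m-1}+O\bigl(\|g\|_\infty\, k^{m(2-d_H)}\bigr),
\]
which iterates to
\[
\int g\,d\varrho_m=\frac{1}{N^n}\sum_{w\in W_n}\int(g\circ\Psi_w)\,d\varrho_{m-n}+O\bigl(\|g\|_\infty\, k^{(m-n+1)(2-d_H)}\bigr).
\]

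To identify the limit, tightness produces a subsequence $\varrho_{m_k}\rightharpoonup\nu$ on $\square$, and Part~(a) together with the Portmanteau theorem forces $\mathrm{supp}(\nu)\subset K$. Since $\diam(\Psi_w\square)=\sqrt{3}k^{-n}$, the function $g\circ\Psi_w$ lies within $\omega_g(\sqrt{3}k^{-n})$ of the constant $g(\Psi_w(\tfrac12,\tfrac12,\tfrac12))$ on $\square$, so using $\varrho_{m-n}(\square)\to C$ we get $\int(g\circ\Psi_w)\,d\varrho_{m-n}=C\,g(\Psi_w(\tfrac12,\tfrac12,\tfrac12))+O(\omega_g(k^{-n}))+o_m(1)$. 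Substituting into the iterated recursion, letting $m\to\infty$ along the subsequence followed by $n\to\infty$, and invoking the Riemann-sum convergence $N^{-n}\sum_{w\in W_n}g(\Psi_w(\tfrac12,\tfrac12,\tfrac12))\to\int g\,d\mu$ (which itself follows by applying the self-similar relation for $\mu$), I obtain $\int g\,d\nu=C\int g\,d\mu$ for every $g\in C(\square)$. Since every subsequential limit equals $C\mu$ with the same constant $C$, the full sequence converges weakly to $C\mu$. The delicate point in the whole argument is the edge-accounting for $E_m^{(i)}\triangle E_{m-1}$ inside interior blocks: it is precisely the Case~1/Case~2 mismatch in Definition~\ref{def210} that produces the $O(k^{2m})$ discrepancy, and this is harmless only because the ambient dimension forces $d_H>2$.
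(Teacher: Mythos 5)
Part (a) is correct. For part (b), the overall plan -- a self-similar recursion for $\int g\,d\varrho_m$, convergence of the total mass $\varrho_m(\square)$, tightness, and identification of the limit via Riemann sums -- is sound, and your bound $|E_m^{\mathrm{cross}}|=O(k^{2m})$ for the cross-block edges is right. However, the claim that the symmetric difference between $E_m^{(i)}$ and a copy of $E_{m-1}$ has size $O(k^{2m})$ is not correct, and the stated reason (``harmless only because $d_H>2$'') is not the right one for that term. You correctly identify that this discrepancy (which is in fact one-sided: $v\stackrel{m-1}{\sim}v'$ always implies $iv\stackrel{m}{\sim}iv'$, so the new edges are only added, never removed) is supported on pairs $v,v'\in(\partial W_1)^{m-1}$ that are grid-adjacent but not face-to-face, occurring only when $i\notin\partial W_1$. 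But the number of such pairs grows like $|\partial W_1|^{\,m-1}$ per block, not $k^{2(m-1)}$: the set $(\partial W_1)^{m-1}$ has $|\partial W_1|^{\,m-1}$ elements, each with $O(1)$ diagonal neighbours, and $|\partial W_1|=8+12(k-2)+6(k-2)^2=6k^2-12k+8>k^2$, so $|\partial W_1|^{\,m-1}$ is strictly larger order than $k^{2m}$. Concretely, take the Menger sponge with the centre cell added ($k=3$, $N=21$, $|\partial W_1|=20$): already at level $1$ the pair $(1,0,0)$, $(0,1,0)$ is diagonal, and the diagonal-pair count at level $m-1$ is at least $\mathrm{const}\cdot 20^{m-1}\gg 9^{m}=k^{2m}$.

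The correct bound for the total intra-block discrepancy is $O\bigl((N-|\partial W_1|)\cdot|\partial W_1|^{\,m-1}\bigr)$, and after dividing by $N^m$ this is $O\bigl((1-\theta)\theta^{m-1}\bigr)$ with $\theta=|\partial W_1|/N\in(0,1]$. This is still summable, but for a different reason than $d_H>2$: either $\theta<1$ (geometric decay), or $\theta=1$ (the minimal case $N=|\partial W_1|$), in which case the coefficient $N-|\partial W_1|$ vanishes and the discrepancy is zero outright. So your proof scheme is repairable -- replace the $O(k^{2m})$ by $O\bigl((N-|\partial W_1|)|\partial W_1|^{\,m-1}\bigr)$ and invoke $|\partial W_1|\leq N$ -- but as written the key quantitative step and its justification are wrong. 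The fact $d_H>2$ (equivalently $N>k^2$) is genuinely needed only for the cross-block term $|E_m^{\mathrm{cross}}|=O(k^{2m})$.
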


\noindent({\textbf{\emph{The convergence ``$\rightarrowtail$''}}}). Let $(B,d)$ be a metric space, let $A_n\subset B,n\geq 1$ and $A\subset B$ be closed subsets, and assume $A_n$ converges to $A$ in the Hausdorff metric. Also, let $f_n\in C(A_n),n\geq 1$ and $f\in C(A)$, we say $f_n\rightarrowtail f$ if $f_n(x_n)\to f(x)$ for any sequence $x_n\in A_n,n\geq 1$ such that $x_n\to x$.

It is not hard to see some useful facts about ``$\rightarrowtail$'' when $B$ is compact (see  \cite[Section 2.1]{C}).

(\textit{\textbf{Fact 1}}). Let $f_n\in C(A_n),n\geq 1$ be a sequence of continuous functions, that is uniformly bounded (i.e. $\sup_{n\geq 1}\|f\|_{L^\infty(A_n)}<\infty$) and is equicontinuous (i.e. there is an increasing function $\rho:[0,\infty)\to [0,\infty)$ such that $\rho(0)=0$, $\rho$ is continuous at $0$ and $|f_n(x)-f_n(y)|\leq \rho\big(d(x,y)\big),\forall n\geq 1,x,y\in A_n$), then we can find a subsequence $\{n_l\}_{l\geq 1}$ and $f\in C(A)$ such that $f_{n_l}\rightarrowtail f$ as $l\to \infty$.

(\textit{\textbf{Fact 2}}). Let $\nu_n$, $n\geq 1$ be a sequence of Radon measures on $A_n$, let $\nu$ be a Radon measure on $A$, and assume $\nu_n\Rightarrow_B \nu$, where $\nu_n\Rightarrow_B \nu$ means the weak convergence of measures on $B$. Then
\[\int_{A_n}f_nd\nu_n\to \int_A fd\nu,\]
as long as $f_n\rightarrowtail f$, $f_n\in C(A_n)$, $f\in C(A)$.

\begin{theorem}\label{thm97}
There is a subsequence $\{m_l\}_{l\geq 1}$ and $\hat p:\mathbb{R}_+\times K^2\to\mathbb{R}$ such that $\bar p_{m_l}\rightarrowtail \hat p$ as $l\to\infty$. In addition, let $C_1$ be the constant appearing in Lemma \ref{lemma96}, and set  $\bar{p}_t(x,y)=C_1\hat p(t,x,y)$. Then $\bar{p}_t(x,y)$ is an integration kernel of a Markov semi-group with respect to $\mu$, satisfying a sub-Gaussian heat kernel estimate
\begin{equation}\label{eqn93}
\frac{c_1}{t^{d_H/d_W}}e^{-c_2(\frac{d(x,y)^{d_W}}{t})^{\frac{1}{d_W-1}}}\leq \bar{p}_t(x,y)\leq \frac{c_3}{t^{d_H/d_W}}e^{-c_4(\frac{d(x,y)^{d_W}}{t})^{\frac{1}{d_W-1}}},
\end{equation}
for $0<t<1$, where $c_1$-$c_4>0$ are constants depending only on $K$.
\end{theorem}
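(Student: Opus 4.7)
The plan is to extract a subsequential pointwise limit of $\bar p_m$ via Arzela--Ascoli, to read off the sub-Gaussian bounds by rescaling Theorem \ref{thm92}, and to verify the Markov property by passing to the limit in the Chapman--Kolmogorov identity using the weak convergence of measures in Lemma \ref{lemma96}(b).

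\textbf{Step 1 (extraction of the limit).} Fix an integer $j\geq 1$ and work on the time interval $[1/j,j]$. For all $m$ large enough that $k^{md_W}>j$, Theorem \ref{thm92} provides $\tilde p^{(m)}_{k^{md_W}t}$ with bounds of the form \eqref{eqn91} whenever $t\in[1/j,j]$, and the Euclidean scaling $d(\Psi_1^{\circ(-m)}x,\Psi_1^{\circ(-m)}y)=k^m d(x,y)$ together with $d_H>d_W/2$ (which follows from $d_H>d_W\geq 2$) shows that the renormalization in \eqref{eqn92} absorbs the prefactor: for such $m$,
\[
\bar p_m(t,x,y)\ \asymp\ \frac{k^{md_H}}{(k^{md_W}t)^{d_H/d_W}}\exp\!\bigl(-c\cdot G_\Xi(k^{md_W}t,k^m d(x,y))\bigr)\ =\ \frac{1}{t^{d_H/d_W}}\exp\!\Bigl(-c(d(x,y)^{d_W}/t)^{1/(d_W-1)}\Bigr),
\]
so the family $\{\bar p_m\}$ is uniformly bounded on $[1/j,j]\times \Psi_1^{\circ(m)}\Xi_m\times\Psi_1^{\circ(m)}\Xi_m$, and Corollary \ref{coro95} gives equicontinuity on the same set. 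Since $\Psi_1^{\circ(m)}\Xi_m\to K$ in the Hausdorff metric by Lemma \ref{lemma96}(a), Fact 1 produces a subsequence along which $\bar p_{m}\rightarrowtail\hat p_{(j)}$ on $[1/j,j]\times K\times K$ for some continuous function $\hat p_{(j)}$. A standard diagonal extraction over $j\geq 1$ yields a single subsequence $\{m_l\}$ and a continuous function $\hat p:\mathbb{R}_+\times K^2\to\mathbb{R}$ so that $\bar p_{m_l}\rightarrowtail\hat p$ on each compact subinterval of time. Passing to the pointwise limit in the uniform two-sided bounds just derived, $\hat p$ and hence $\bar p_t:=C_1\hat p(t,\cdot,\cdot)$ satisfies \eqref{eqn93} on $(0,1)$.

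\textbf{Step 2 (Markov property).} The cable-system heat kernel $\tilde p^{(m)}_t$ is conservative and satisfies Chapman--Kolmogorov with respect to $\varrho_\Xi$. A change of variable $u=\Psi_1^{\circ(-m)}y$ combined with the definition $d\varrho_m=k^{-md_H}\,d(\Psi_1^{\circ(m)})_*\varrho_\Xi$ translates this into the scaled identities
\[
\int_{\Psi_1^{\circ(m)}\Xi_m}\bar p_m(t,x,y)\,d\varrho_m(y)=1,\qquad \bar p_m(t+s,x,z)=\int_{\Psi_1^{\circ(m)}\Xi_m}\bar p_m(t,x,y)\bar p_m(s,y,z)\,d\varrho_m(y),
\]
valid for all $t,s>0$ and $x,z\in\Psi_1^{\circ(m)}\Xi_m$ with $t+s<1$. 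Given $t,s>0$ and $x,z\in K$, pick sequences $x_{m_l},z_{m_l}\in\Psi_1^{\circ(m_l)}\Xi_{m_l}$ converging to $x,z$; the equicontinuity from Step 1 combined with $\bar p_{m_l}\rightarrowtail\hat p$ shows that the integrands $y\mapsto \bar p_{m_l}(t,x_{m_l},y)\bar p_{m_l}(s,y,z_{m_l})$ (resp.\ $y\mapsto \bar p_{m_l}(t,x_{m_l},y)$) satisfy the hypotheses of Fact 2 relative to $\varrho_{m_l}\Rightarrow_\square C_1\mu$ from Lemma \ref{lemma96}(b). Passing to the limit gives $\int_K \hat p(t,x,y)\,C_1\,d\mu(y)=1$ and $\hat p(t+s,x,z)=C_1\int_K \hat p(t,x,y)\hat p(s,y,z)\,d\mu(y)$, which for $\bar p_t=C_1\hat p(t,\cdot,\cdot)$ is precisely mass conservation and Chapman--Kolmogorov with respect to $\mu$.

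\textbf{Main obstacle and remarks.} The delicate step is ensuring that the (non-standard) bounds \eqref{eqn91} on $\tilde p^{(m)}$, which carry a dimensional transition at $t\asymp 1$, collapse after renormalization to the standard sub-Gaussian form uniformly on each compact time subinterval $[1/j,j]$. The two competing terms $t^{1/2}$ and $t^{d_H/d_W}$ in the cable-system prefactor correspond under \eqref{eqn92} to $k^{m(d_W/2-d_H)}$ and $1$ respectively, so one must check $d_H>d_W/2$, which is afforded by $d_W\geq 2$ and $d_H>d_W$ (Remark \ref{remark82}); a parallel case analysis of $G_\Xi(k^{md_W}t,k^m d(x,y))$ identifies the regime $t\geq d(x,y)^{d_W-1}k^{m(1-d_W)}$ as the one relevant for fixed positive $t$ and large $m$, yielding the standard exponent. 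Once this scaling bookkeeping is in place, the rest (Arzela--Ascoli extraction, interchange of limit with integration via Fact 2) is routine.
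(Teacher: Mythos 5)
Your proposal is correct and follows essentially the same route as the paper: Arzel\`a--Ascoli (Fact 1) via Corollary \ref{coro95} and a diagonal argument to extract $\hat p$, the observation that the rescaled prefactor and exponent in \eqref{eqn91} converge to the standard sub-Gaussian form as $m\to\infty$, and passage to the limit in mass conservation and Chapman--Kolmogorov via Fact 2 and Lemma \ref{lemma96}(b). Two cosmetic remarks: in the regime check the condition should read $t\geq d(x,y)\,k^{m(1-d_W)}$ (not $d(x,y)^{d_W-1}k^{m(1-d_W)}$), and the inequality $d_H>d_W/2$ already follows from $d_H>d_W>0$ without invoking $d_W\geq 2$; neither affects the conclusion.
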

\begin{proof}
 By (\textbf{Fact 1}), Corollary \ref{coro95} and the upper bound estimate (\ref{eqn91}), using a diagonal argument, we can find a subsequence $\{m_l\}_{l\geq 1}$ such that $\bar p_{m_l}\rightarrowtail \hat p$ for some $\hat p\in C(\mathbb{R}_+\times K^2)$.

 In the following, without loss of generality, we assume $\bar p_m\rightarrowtail \hat p$ as $m\to \infty$. Notice that for any $t>0$, $x_m\to x$, $y_m\to y$, $\frac{k^{md_H}}{(k^{md_W}t)^{1/2}\vee (k^{md_W}t)^{d_H/d_W}}$ converges to $\frac{1}{t^{d_H/d_W}}$, and  $G_\Xi\big(k^{md_W}t,d_\Xi(\Psi_1^{\circ(-m)}x_m,\Psi_{1}^{\circ(-m)}y_m)\big)$ converges to $(\frac{d(x,y)^{d_W}}{t})^{\frac{1}{d_W-1}}$. The estimates (\ref{eqn93}) follows from Theorem \ref{thm92} with $c_1$-$c_4$ being the same constants in (\ref{eqn91}).

 Finally, we check that $\bar{p}_t(x,y)\mu(dy)=C_1\hat p(t,x,y)\mu(dy), t>0$ form a Markov semigroup. In fact, by (\textbf{Fact 2}) and Lemma \ref{lemma96} (b), it is easy to see that $\int_K \bar{p}_t(x,y)\mu(dy)=1,\forall t>0,x\in K$, and $\bar{p}_t(x,y)=\int_K \bar{p}_s(x,z)\bar{p}_{t-s}(z,y)\mu(dz),\forall t>s>0,x,y\in K$.
\end{proof}

\begin{remark} $\{\bar{p}_t(x,y)\mu(dy)\}_{t>0}$ is a Feller semigroup (by the H\"older continuity of the kernel), so there is an associated Hunt process (see \cite[Theorem 1.9.4] {BG}). In addition, it is easy to see that it is a diffusion process (see \cite[Proposition 1.9.10]{BG}).  Also, notice that $\bar{p}_t(x,y)$ is symmetric, it is uniquely associated with a conservative local regular Dirichlet form $(\bar{\mcE},\mcF)$ on $K$.
\end{remark}

\section{A self-similar form}\label{sec10}
In this section, by using an approach developed in \cite{CQ2} by the authors, we convert the limit form $(\bar {\mathcal E},\mathcal F)$ on $K$ constructed in the last section into  an SsDF $(\mathcal E,\mcF)$. (recall Section \ref{sec1}).

Before proceeding, we remark here that an SsDF $(\mcE,\mcF)$ on  $K$ is always strongly local. Indeed, $(\mcE,\mcF)$ is local by \cite[Lemma 3.12]{Hino} and the fact that $\{f\circ \Psi_i: i\in\{1,\cdots, N\}\}\subset \mcF$ for any $f\in\mcF\cap C(K)$ by (\ref{eqn1}), (\ref{eqn2}), and the denseness of $\mcF\cap C(K)$ in $(\mcF, \|\cdot\|_{\mcE_1}:=\sqrt{\mcE(\cdot)+\|\cdot\|^2_{L^2(K,\mu)}})$, and thus $(\mcE,\mcF)$ is strongly local as a conservative local Dirichlet form.

\begin{lemma}\label{lemma101}
Let $(\bar{\mcE},\mcF)$ be the Dirichlet form constructed in Section \ref{sec9}. For each $f\in L^2(K,\mu)$ and $r>0$, we define
\[
I_r(f)=r^{-d_H-d_W}\int_K\int_{B(x,r)}\big(f(x)-f(y)\big)^2\mu(dy)\mu(dx).
\]
Then $\mcF=\big\{f\in L^2(K,\mu):\sup_{0<r\leq 1}I_r(f)<\infty\big\}=\big\{f\in L^2(K,\mu): \liminf_{r\to 0}I_r(f)<\infty\big\}$ and
\[
	C_1\sup_{0<r\leq 1}I_r(f)\leq \bar{\mcE}(f)\leq C_2\liminf_{r\to 0}I_r(f), \text{ for any } f\in \mcF
\]
for some  constants $C_1,C_2>0$.
\end{lemma}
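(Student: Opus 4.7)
This is a Besov-type characterisation of the domain of the strongly local Dirichlet form with sub-Gaussian heat kernel (\ref{eqn93}), extracted via the heat semigroup $P_t$ of kernel $\bar p_t$. Setting
\[
\mathcal E^{(t)}(f):=\tfrac1t\langle f-P_tf,f\rangle_{L^2(K,\mu)}=\tfrac1{2t}\int_K\!\int_K(f(x)-f(y))^2\bar p_t(x,y)\,\mu(dx)\mu(dy),
\]
the second equality from symmetry of $\bar p_t$ and conservativity $P_t 1=1$, spectral calculus makes $t\mapsto\mathcal E^{(t)}(f)$ non-increasing, so general Dirichlet-form theory identifies $\mathcal F=\{f\in L^2(K,\mu):\bar{\mathcal E}(f)<\infty\}$ with $\bar{\mathcal E}(f)=\sup_{t>0}\mathcal E^{(t)}(f)=\lim_{t\downarrow 0}\mathcal E^{(t)}(f)$.

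\emph{Lower bound.} For $c_0>0$ sufficiently small, the lower half of (\ref{eqn93}) yields $\bar p_t(x,y)\gtrsim t^{-d_H/d_W}$ on $\{d(x,y)\le c_0 t^{1/d_W}\}$; restricting the double integral to this diagonal strip at the matching scale gives $\mathcal E^{(t)}(f)\gtrsim I_{c_0 t^{1/d_W}}(f)$ and hence $\bar{\mathcal E}(f)\gtrsim\sup_{0<r\le c_0}I_r(f)$. For $r\in[c_0,1]$ the estimate $r^{d_H+d_W}I_r(f)\le\int\!\int(f(x)-f(y))^2\mu(dx)\mu(dy)\asymp\mathrm{Var}_\mu(f)$ combines with the Poincaré inequality $\mathrm{Var}_\mu(f)\lesssim\bar{\mathcal E}(f)$ on $(K,\mu)$---a consequence of (\ref{eqn93}) via \cite{HS}, or passed to the limit from Lemma \ref{lemma91}(b)---to yield $I_r(f)\lesssim\bar{\mathcal E}(f)$, completing $C_1\sup_{0<r\le 1}I_r(f)\le\bar{\mathcal E}(f)$.

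\emph{Upper bound and main obstacle.} Inserting the upper half of (\ref{eqn93}) into $\mathcal E^{(t)}(f)$ and performing a layer-cake integration by parts in $d(x,y)$ produces
\[
\mathcal E^{(t)}(f)\le C\int_0^\infty w(v)\,I_{v t^{1/d_W}}(f)\,dv,\qquad w(v):=v^{d_H+d_W+\frac{1}{d_W-1}}e^{-c\,v^{d_W/(d_W-1)}},
\]
with $w$ integrable due to its super-exponential decay matching the sub-Gaussian exponent. The lower bound already shows $\sup_r I_r(f)\lesssim\bar{\mathcal E}(f)$, so the integrand is dominated uniformly in $t$ by $w(v)\cdot C\bar{\mathcal E}(f)$, integrable in $v$. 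Picking $t_n\downarrow 0$ with $I_{t_n^{1/d_W}}(f)\to L:=\liminf_{r\to 0}I_r(f)$ and applying reverse Fatou's lemma with this dominator gives
\[
\bar{\mathcal E}(f)=\lim_n\mathcal E^{(t_n)}(f)\le C\int_0^\infty w(v)\,\limsup_n I_{v t_n^{1/d_W}}(f)\,dv\le C\|w\|_{L^1(0,\infty)}\cdot\limsup_{r\to 0}I_r(f).
\]
The remaining---and principal technical---step is to bound $\limsup_{r\to 0}I_r(f)$ by a universal multiple of $L=\liminf_{r\to 0}I_r(f)$; this Besov-type comparison follows from the combined use of (i) the monotonicity of $r\mapsto r^{d_H+d_W}I_r(f)$, (ii) the super-exponential concentration of $w$ near a single finite scale, and (iii) the a priori bound $\sup_r I_r(f)\lesssim\bar{\mathcal E}(f)$ from the lower bound, via a careful bookkeeping argument standard in Besov analysis on Ahlfors-regular spaces. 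Combining yields $\bar{\mathcal E}(f)\le C_2 L$, and the three-way equality of the characterisations of $\mathcal F$ is automatic from $\liminf\le\sup$ together with the chain $C_1\sup\le\bar{\mathcal E}\le C_2\liminf$.
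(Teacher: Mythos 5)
Your setup is the right one (characterize $\bar{\mcE}(f)$ as the increasing limit of $\mcE^{(t)}(f)=\frac{1}{2t}\int\!\int(f(x)-f(y))^2\bar p_t\,d\mu\,d\mu$, use the sub-Gaussian lower bound near the diagonal for $C_1\sup I_r\le\bar\mcE$, and a dyadic shell decomposition with the upper bound for the other direction), and this is indeed the method of \cite{GHL1} that the paper points to. But the proof as written has a genuine gap at its last and decisive step: after reverse Fatou you arrive only at $\bar\mcE(f)\le C\|w\|_1\limsup_{r\to 0}I_r(f)$, and you then assert that $\limsup_{r\to 0}I_r(f)\lesssim\liminf_{r\to 0}I_r(f)$ follows from ``standard bookkeeping'' given the monotonicity of $r^{d_H+d_W}I_r(f)$, the concentration of $w$, and the a priori bound $\sup_r I_r\lesssim\bar\mcE$. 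None of these three items delivers that comparison. Monotonicity of $r^{d_H+d_W}I_r(f)$ only controls $I_s(f)$ at scales within a bounded ratio of a given $r$ (the constant degrades like $(r/s)^{d_H+d_W}$), so it cannot relate $I$ along an arbitrary minimizing sequence $r_n\to 0$ to $I$ at all small scales. The a priori bound $\sup_r I_r\lesssim\bar\mcE$ gives $\limsup I_r\lesssim\bar\mcE$, so combining it with the inequality you want to prove would give $\limsup\lesssim\liminf$ — that is circular, not a proof of it. There is no reason in general for a Besov-type quantity $I_r$ to have comparable $\limsup$ and $\liminf$ a priori; it is precisely the content of the lemma, not an input.

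The argument that actually closes this (and is what is meant by ``following the method of \cite{GHL1}'') avoids comparing $\limsup$ and $\liminf$ entirely, via an absorption step. From the dyadic decomposition one gets, for any $a\ge 1$,
\[
\mcE^{(t)}(f)\;\le\; C(a)\,I_{at^{1/d_W}}(f)\;+\;\varepsilon(a)\,\sup_{0<r\le 1}I_r(f),
\]
where the near-diagonal block (scales $\le a t^{1/d_W}$) is pulled up to $I_{at^{1/d_W}}(f)$ via the monotonicity of $r^{d_H+d_W}I_r(f)$ at the cost of $C(a)\sim a^{d_H+d_W}$, and the far block has total weight $\varepsilon(a)\to 0$ as $a\to\infty$ by the super-exponential decay of the shell coefficients. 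Now for $f\in\mcF$ one uses $\sup_r I_r(f)\le C_1^{-1}\bar\mcE(f)$ (already proved), lets $t\to 0$ along a sequence for which $I_{at^{1/d_W}}(f)\to\liminf_{r\to0}I_r(f)$, and then chooses $a$ large enough that $\varepsilon(a)C_1^{-1}\le 1/2$ to absorb the second term into $\bar\mcE(f)$ on the left. This yields $\bar\mcE(f)\le 2C(a)\liminf_{r\to0}I_r(f)$ directly. (You also need a small separate remark to obtain the domain inclusion $\{\liminf_{r\to 0}I_r(f)<\infty\}\subset\mcF$, since the absorption step presupposes $\bar\mcE(f)<\infty$ through the a priori bound; this requires a version of the same splitting where the far block is estimated using the elementary bound $I_s(f)\le 4s^{-d_H-d_W}\|f\|_2^2$ at fixed scales rather than $\sup_r I_r(f)$.) So the missing ingredient is not bookkeeping but a different closing move: absorption rather than $\limsup$--$\liminf$ comparison. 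Note also that the paper itself does not reprove this lemma — it cites \cite[Lemma 4.2]{CQ2} — so there is no proof in the source to compare to; but the proposal as written is not a complete proof.
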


The proof of Lemma \ref{lemma101} is the same as  \cite[Lemma 4.2]{CQ2}, following the method of \cite{GHL1}. So we omit it here. The following proof of Theorem \ref{thm1} is also essentially the same as that of  \cite[Theorem 4.1]{CQ2}. However, since the dimension changes, we reproduce the proof for the convenience of readers.

\begin{proof}[Proof of Theorem \ref{thm1}]
Recall that $\rho=k^{d_H-d_W}$ is the resistance renormalization factor appearing in Theorem \ref{thm81}.

Throughout the proof, we take the setting that $\bar{\mcE}(f)=\infty$ if $f\notin\mcF$. For $n\geq 0$, define
\[
\mcE_n(f):=\sum_{w\in W_n}\rho^{-n}\bar{\mcE}(f\circ \Psi_w),\quad\forall f\in L^2(K,\mu).
\]
In addition, for each $f,g\in \mcF$, we define $\mcE_n(f,g):=\sum_{w\in W_n}\rho^{-n}\bar{\mcE}(f\circ \Psi_w,g\circ \Psi_w)$. 
We will show that $(\mcE_n,\mcF)$ is a Dirichlet form, and $C_1\bar{\mcE}(f)\leq \mcE_n(f)\leq C_2\bar{\mcE}(f),\forall f\in \mcF$ for some constants $C_1,C_2>0$ independent of $n$. Moreover, for the verification of (\ref{eqn1}), we will show that $\mcF_{n,c}=\mcF_{c}$, where $\mcF_{n,c}:=\{f\in C(K):f\circ \Psi_w\in \mcF,\forall w\in W_n\}$ and $\mcF_{c}:=\mcF\cap C(K)$.\vspace{0.2cm}
	
	We first show $\mcE_n(f)\leq C_2\bar{\mcE}(f),\forall f\in L^2(K,\mu)$. For any small $r$, we have
	\[
	\begin{aligned}
		I_r(f)\geq \sum_{w\in W_n}r^{-d_H-d_W}\int_{\Psi_wK}\int_{\Psi_wK\cap B(x,r)}\big(f(x)-f(y)\big)^2\mu(dy)\mu(dx)=\rho^{-n}\sum_{w\in W_n}I_{k^nr}(f\circ \Psi_w).
	\end{aligned}
	\]
	The inequality $\mcE_n(f)\leq C_2\bar{\mcE}(f)$ follows immediately by Lemma \ref{lemma101} and letting $r\to 0$. It is also clear that $\mcF_c\subset\mcF_{n,c}$.\vspace{0.2cm}
	
	The other inequality $\mcE_n(f)\geq C_1\bar{\mcE}(f)$ needs more care. For $f\in\mcF_{n,c}$, we need to estimate  $I_r(f)$ (for small $r$) across the cells:
	\begin{equation}\label{eqn101}
		I_r(f)-\rho^{-n}\sum_{w\in W_n}I_{k^nr}(f\circ \Psi_w):=\sum_{w\neq w'\in W_n}I_{r,w,w'}(f),
	\end{equation}
	where
	\[
	I_{r,w,w'}(f)=r^{-d_H-d_W}\int\int_{\{(x,y)\in \Psi_wK\times \Psi_{w'}K:d(x,y)<r\}}\big(f(x)-f(y)\big)^2\mu(dx)\mu(dy).
	\]\vspace{0.2cm}
	
\noindent\textit{Claim 1.
Let $w,w'\in W_n$ such that $\Psi_wK\cap \Psi_{w'}K=\{z\}$ for some $z\in K$. We pick $w''\in W_n$ such that $\Psi_{w''}K\cap \Psi_wK=\infty, \Psi_{w''}K\cap \Psi_{w'}K=\infty$. Then $I_{r,w,w'}(f)\leq C_3\big(I_{2r,w,w''}(f)+I_{2r,w',w''}(f)\big),\forall f\in L^2(K,\mu)$, for some constant $C_3>0$ independent of $w,w',n,r$.}
\begin{proof}[Proof of Claim 1]
We define
\[I'_{r,w,w'}(f):=r^{d_H-d_W}\fint_{\Psi_wK\cap B(z,r)}\fint_{\Psi_{w'}K\cap B(z,r)}\big(f(x)-f(y)\big)^2\mu(dx)\mu(dy),\]		
and define $I'_{r,w,w''}(f)$ and $I'_{r,w',w''}(f)$ in a same way, noticing that $z\in \Psi_{w''}K$ as well. Then noticing that $\mu\big(\Psi_wK\cap B(z,r)\big)\asymp r^{d_H}$, by the Minkowski inequality, we can see
\[
\begin{aligned}
	\sqrt{I_{r,w,w'}(f)}\lesssim &\sqrt{I'_{r,w,w'}(f)}\\=&\sqrt{r^{d_H-d_W}\fint_{\Psi_wK\cap B(z,r)}\fint_{\Psi_{w''}K\cap B(z,r)}\fint_{\Psi_{w'}K\cap B(z,r)}}\\&\overline{\Big(\big(f(x)-f(x')\big)+\big(f(x')-f(y)\big)\Big)^2\mu(dx)\mu(dx')\mu(dy)}\\
	\leq&\sqrt{I'_{r,w,w''}(f)}+\sqrt{I'_{r,w',w''}(f)}\\
	\lesssim&\sqrt{I_{2r,w,w''}(f)}+\sqrt{I_{2r,w',w''}(f)}.
\end{aligned}
\]
The claim follows immediately.
\end{proof}

\noindent\textit{Claim 2. For $n\geq 1$ and $r$ small enough, we have for some constant $C_4>0$ independent of $n,r$ such that	}
\[
\sum_{w\neq w'\in W_n,\#(\Psi_wK\cap \Psi_{w'}K)=\infty}I_{r,w,w'}(f)\leq C_4\rho^{-n}\sum_{w\in W_n}\bar{\mcE}(f\circ \Psi_w), \quad\forall f\in C(K).
\]
\begin{proof}
We fix a pair $(w,w')\in W_n\times W_n$ with $\#(\Psi_wK\cap \Psi_{w'}K)=\infty$, and let $L_{w,w'}=\Psi_wK\cap \Psi_{w'}K$. We now consider two possible cases.
	
Firstly, we assume that $L_{w,w'}$ is a line segment. We let $\nu$ be the Lebesgue measure (length) on $L_{w,w'}$, and we choose large enough $C_5>1$ and let $r'=C_5r$. Define for $l\geq 0$,
	\[
	\begin{aligned}
		I'_{r,w,w',l}&:=r^{d_H-d_W}\int_{L_{w,w'}}\frac{\nu(dz)}{r}\fint_{\Psi_wK\cap B(z,2^{-l}r')}\fint_{\Psi_{w'}K\cap B(z,2^{-l}r')}\big(f(x)-f(y)\big)^2\mu(dx)\mu(dy),\\
		D'_{r,w,w',l}&:=(2^{-l}r)^{d_H-d_W}\int_{L_{w,w'}}\frac{\nu(dz)}{2^{-l}r}\fint_{\Psi_wK\cap B(z,2^{-l}r')}\fint_{\Psi_wK\cap B(z,2^{-l+1}r')}\big(f(x)-f(x')\big)^2\mu(dx)\mu(dx').
	\end{aligned}
	\]
	Here we need to explain the reason that we choose the measure $\nu(dz)/r$ (or $\nu(dz)/(2^{-l}r)$) in the integral: for any $x,y$ close enough, and when $r$ is small enough, we have $\nu\big(\{z:x\in B(z,r'),y\in B(z,r')\}\big)\approx r$. In this case, by denoting $A_{w,l}(z)=\Psi_wK\cap B(z,2^{-l}r') $ and $A_{w',l}(z)=\Psi_{w'}K\cap B(z,2^{-l}r') $ for short, and by using the Minkowski inequality, we can check that
	\[
	\begin{aligned}
		&\sqrt{I'_{r,w,w',l}}\\=&\sqrt{r^{d_H-d_W}\int_{L_{w,w'}}\frac{d\nu(z)}{r}\fint_{A_{w,l}(z)}\fint_{A_{w',l}(z)}\fint_{A_{w,l+1}(z)}\fint_{A_{w',l+1}(z)}}\\&\overline{\Big(\big(f(x)-f(x')\big)+\big(f(x')-f(y')\big)+\big(f(y')-f(y)\big)\Big)^2\mu(dx)\mu(dy)\mu(dx')\mu(dy')}\\
		\leq&2^{-(l+1)(d_W-d_H+1)/2}\sqrt{D'_{r,w,w',l+1}}+\sqrt{I'_{r,w,w',l+1}}+2^{-(l+1)(d_W-d_H+1)/2}\sqrt{D'_{r,w',w,l+1}}.
	\end{aligned}
	\]
	Summing up the above inequality over $l$ and noticing that $f\in C(K)$, we get
	\[
	\sqrt{I'_{r,w,w',0}}\leq\sum_{l=1}^\infty 2^{-l(d_W-d_H+1) /2}\big(\sqrt{D'_{r,w,w',l}}+\sqrt{D'_{r,w',w,l}}\big).
	\]
	Then by using the Cauchy-Schwarz inequality, we can see that
	\begin{equation}\label{eqn102}
		I'_{r,w,w',0}\leq 2\big(\sum_{l=1}^\infty 2^{-l(d_W-d_H+1)/2}\big)\cdot\big(\sum_{l=1}^\infty 2^{-l(d_W-d_H+1)/2}(D'_{r,w,w',l}+D'_{r,w',w,l})\big).
	\end{equation}

	Secondly, we assume that $L_{w,w'}$ is a rectangle. We let $\nu$ be the Lebesgue measure (area) on $L_{w,w'}$, and we still choose large enough $C_5>1$ and let $r'=C_5r$. Define for $l\geq 0$,
	\[
   \begin{aligned}
	I'_{r,w,w',l}&:=r^{d_H-d_W}\int_{L_{w,w'}}\frac{\nu(dz)}{r^2}\fint_{\Psi_wK\cap B(z,2^{-l}r')}\fint_{\Psi_{w'}K\cap B(z,2^{-l}r')}\big(f(x)-f(y)\big)^2\mu(dx)\mu(dy),\\
	D'_{r,w,w',l}&:=(2^{-l}r)^{d_H-d_W}\int_{L_{w,w'}}\frac{\nu(dz)}{2^{-2l}r^2}\fint_{\Psi_wK\cap B(z,2^{-l}r')}\fint_{\Psi_wK\cap B(z,2^{-l+1}r')}\big(f(x)-f(x')\big)^2\mu(dx)\mu(dx').
    \end{aligned}
    \]
	Here we choose the measure $\nu(dz)/r^2$ (or $\nu(dz)/(2^{-2l}r^2)$) in the integral since for any $x,y$ close enough, and when $r$ is small enough, we have $\nu\big(\{z:x\in B(z,r'),y\in B(z,r')\}\big)\approx r^2$. In this case, similarly as above, we can check that
	\begin{equation}\label{eqn103}
		I'_{r,w,w',0}\leq \big(2\sum_{l=1}^\infty 2^{-l(d_W-d_H+2)/2}\big)\cdot\big(\sum_{l=1}^\infty 2^{-l(d_W-d_H+2)/2}(D'_{r,w,w',l}+D'_{r,w',w,l})\big).
	\end{equation}

	In both cases, if $C_5>1$ is chosen suitably, we have for any $r$ small enough (much smaller than the size of $L_{w,w'}$),
	\begin{equation}\label{eqn104}
		\begin{cases}
			I'_{r,w,w',0}\geq C_6I_{r,w,w'}(f),\\
			\sum\limits_{w'\in W_n\setminus \{w\}, \#(\Psi_{w'}K\cap \Psi_wK)=\infty}	D'_{r,w,w',l}\leq  C_6\cdot \rho^{-n}I_{2^{-l+2}k^nr'}(f\circ \Psi_w),
		\end{cases}
	\end{equation}
	where $C_6>0$ is a constant depending only on the fractal $K$. By combining (\ref{eqn102}), (\ref{eqn103}), (\ref{eqn104}), and Lemma \ref{lemma101}, noticing that $d_W-d_H+1>0$, the claim holds.
\end{proof}
	
Now combining Claim 1, Claim 2, (\ref{eqn101}) and Lemma \ref{lemma101}, we finally see
\[C_1\bar{\mcE}(f)\leq\mcE_n(f),\quad\forall f\in \mcF_{n,c},\]
for some $C_1>0$. Thus it is clear that $\mcF_{n,c}\subset\mcF_c$, whence $\mcF_c=\mcF_{n,c}$. \vspace{0.2cm}

\noindent\textit{Claim 3. $(\mcE_n,\mcF)$ is a Dirichlet form and $C_1\bar{\mcE}(f)\leq \mcE_n(f)\leq C_2\bar{\mcE}(f)$ for each $f\in \mcF$.}
\begin{proof}[Proof of Claim 3]
For each $f\in \mcF$, we take a Cauchy sequence $f_m\to f$ with $f_m\in\mcF_c$ with respect to the $\bar{\mcE}_1$-norm  $\|\cdot\|_{\bar{\mcE}_1}:=\sqrt{\bar{\mcE}(\cdot)+\|\cdot\|^2_{L^2(K,\mu)}}$. Then, $f_m$ converges in $L^2(K,\mu)$ to $f$, and it is straightforward to see that $\mcE_n(f_m)\to \mcE_n(f)$, so $C_1\bar{\mcE}(f)\leq \mcE_n(f)\leq C_2\bar{\mcE}(f)$ holds. The fact that $(\mcE_n,\mcF)$ is a Dirichlet form follows easily.
\end{proof}
	
We proceed to finish the construction following the idea of Kusuoka-Zhou \cite{KZ}. Let $\hat{\mcF}$ be a $\mathbb{Q}$-vector subspace of $\mcF$ with countably many elements which is dense in $\mcF$ with respect to the norm $\|\cdot\|_{\bar{\mcE}_1}$. To achieve this, one can simply choose a dense $\mathbb{Q}$-vector subspace $H$ of $L^2(K,\mu)$ with countably many elements, and let $\hat{\mcF}=U_1(H)$, where $U_1$ is the resolvent operator associated with $\bar{\mcE}_1$, i.e. $\bar{\mcE}_1(U_1f,g)=\int_{K}fgd\mu$, for any $f\in L^2(K,\mu)$ and $g\in \mcF$. Then by a diagonal argument, there is a subsequence $\{n_l\}_{l\geq 1}$ such that the limit
\[
{\mcE}(f):=\lim\limits_{l\to\infty}\frac{1}{n_l}\sum_{m=1}^{n_l}\mcE_m(f)
\]
	exists for any $f\in\hat{\mcF}$. Furthermore, for a general $f\in \mcF$, we can also prove that the limit ${\mcE}(f):=\lim\limits_{l\to\infty}\frac{1}{n_l}\sum_{m=1}^{n_l}\mcE_m(f)$ exists. Indeed, for any $\varepsilon>0$, we choose $g\in \hat{\mcF}$ such that $\|f-g\|_{\bar{\mathcal{E}}_1}<\varepsilon$. Then, by using the inequality $\big|\sqrt{\frac{1}{n_l}\sum_{m=1}^{n_l}\mcE_m(f)}-\sqrt{\frac{1}{n_l}\sum_{m=1}^{n_l}\mcE_m(g)}\big|^2\leq\frac{1}{n_l}\sum_{m=1}^{n_l}\mcE_m(f-g)\leq C_2\bar{\mcE}(f-g), \forall l\geq 1$, we have
	\begin{align*}
		&\limsup_{l,l'\to\infty}\Big|\sqrt{\frac{1}{n_l}\sum_{m=1}^{n_l}\mcE_m(f)}-\sqrt{\frac{1}{n_{l'}}\sum_{m=1}^{n_{l'}}\mcE_m(f)}\Big|\\
		\leq &\limsup_{l,l'\to\infty}\Big(\Big|\sqrt{\frac{1}{n_l}\sum_{m=1}^{n_l}\mcE_m(f)}-\sqrt{\frac{1}{n_l}\sum_{m=1}^{n_l}\mcE_m(g)}\Big|+\Big|\sqrt{\frac{1}{n_{l}}\sum_{m=1}^{n_{l}}\mcE_m(g)}-\sqrt{\frac{1}{n_{l'}}\sum_{m=1}^{n_{l'}}\mcE_m(g)}\Big|\\
		&\qquad\quad+\Big|\sqrt{\frac{1}{n_{l'}}\sum_{m=1}^{n_{l'}}\mcE_m(g)}-\sqrt{\frac{1}{n_{l'}}\sum_{m=1}^{n_{l'}}\mcE_m(f)}\Big|\Big)\\\leq
		& 2\sqrt{C_2}\varepsilon+\limsup_{l,l'\to 0}\Big|\sqrt{\frac{1}{n_{l}}\sum_{m=1}^{n_{l}}\mcE_m(g)}-\sqrt{\frac{1}{n_{l'}}\sum_{m=1}^{n_{l'}}\mcE_m(g)}\Big|=2\sqrt{C_2}\varepsilon.
	\end{align*}
	This implies that $\sqrt{\frac{1}{n_l}\sum_{m=1}^{n_l}\mcE_m(f)},l\geq 1$ is a Cauchy sequence, and so its limit exists. By the inequality $C_1\bar{\mcE}(f)\leq \mcE_m(f)\leq C_2\bar{\mcE}(f),\forall m\geq 1,f\in \mathcal{F}$, we know that
	\begin{equation}\label{eqn4x}
		C_1\bar{\mcE}(f)\leq {\mcE}(f)\leq C_2\bar{\mcE}(f),\quad\forall f\in\mcF.
	\end{equation}
	
	Immediately, the functional $\mcE$ induces a regular symmetric Dirichlet form $(\mcE,\mcF)$ on $L^2(K,\mu)$ which is conservative, i.e., satisfies ${\mcE}(\bm1)=0$. The self-similarity (\ref{eqn1}) of the domain $\mcF$ follows from the fact $\mcF_c=\mcF_{1,c}$, and the self-similarity (\ref{eqn2}) of $\mcE$ is immediate from the construction. The irreducibility of $(\mcE,\mcF)$ follows from Lemma \ref{lemma101}, (\ref{eqn4x}) and \cite[Theorem 2.1.11]{CF}.

    Finally, by (\ref{eqn4x}) and Theorem \ref{thm97}, the sub-Gaussian heat kernel estimate holds by standard arguments (for example, see \cite{AB,BCM,BM,GHL2} for various results about stability of Harnack inequalities, and also other equivalent characterizations that are stable under perturbation of the energies).
\end{proof}

 %In the case that all the $1$-cells are perfectly attached to its neighbors, we believe that our method can be easily adapted (to be a shorter proof) with careful adjustment of the symmetry, even if that the boundary faces are not included and the global symmetry is not full (for example, the $3$ dimensional analogs to the Sierpi\'nski chips introduced by Kigami \cite{}).

%\section*{Acknowledgments}

%\subsection*{Conflicts of interest} The Authors declare that there is no conflict of interest.

%\subsection*{Data availability statement.} Data sharing not applicable to this article as no datasets were generated or analysed during the current study.

\bibliographystyle{amsplain}

\end{document}